\DeclareMathOperator{\dist}{dist}
\DeclareMathOperator{\diam}{diam}
\DeclareMathOperator{\inrad}{inrad}
\DeclarePairedDelimiter\abs{\lvert}{\rvert}
\let\oldabs\abs
\def\abs{\@ifstar{\oldabs}{\oldabs*}}
\newcommand{\eps}{\varepsilon}
\newcommand{\R}{\mathbb{R}}
\newcommand{\p}{\partial}
\newcommand{\Ds}{(-\Delta)^s}
\newcommand{\PV}{\textnormal{P.V.}\,}
\newcommand{\loc}{\textnormal{loc}}
\newcommand{\norm}[2][]{\left\|{#2}\right\|_{#1}}
\newcommand{\seminorm}[2][]{\left[{#2}\right]_{#1}}
\newcommand{\set}[1]{\left\{#1\right\}}
\newcommand{\bS}{\mathbb{S}}
\newcommand{\cA}{\mathcal{A}}
\newcommand{\cL}{\mathcal{L}}
\newcommand{\cP}{\mathcal{P}}
\newcommand{\cT}{\mathcal{T}}
\newcommand{\textas}{\text{ as }}
\newcommand{\texton}{\text{ on }}
\newcommand{\textor}{\text{ or }}
\newcommand{\Textin}{\text{ in }}
\newcommand{\textfor}{\text{ for }}
\newcommand{\textand}{\text{ and }}
\DeclareMathOperator{\supp}{supp}
\theoremstyle{plain}
\newtheorem{thm}{Theorem}[section]
\newtheorem{lem}[thm]{Lemma}
\newtheorem{cor}[thm]{Corollary}
\newtheorem{prop}[thm]{Proposition}
\theoremstyle{definition}
\newtheorem{defn}[thm]{Definition}
\theoremstyle{remark}
\newtheorem{remark}[thm]{Remark}
\newcommand{\bremark}{\begin{remark} \em}
\newcommand{\eremark}{\end{remark} }
\numberwithin{equation}{section}
\definecolor{g2}{rgb}{0,0.6,0}
\definecolor{r2}{rgb}{0.8,0,0}
\begin{document}

\title
	{Boundary regularity of an isotropically censored nonlocal operator}

\author{Hardy Chan}
\email{hardy.chan@unibas.ch}
\address{Department of Mathematics and Computer Science, University of Basel, Spiegelgasse 1, 4051 Basel, Switzerland.}


\begin{abstract}
In a bounded domain, we consider a variable range nonlocal operator, which is maximally isotropic in the sense that its radius of interaction equals the distance to the boundary. We establish $C^{1,\alpha}$ boundary regularity and existence results for the Dirichlet problem.
\end{abstract}

\maketitle


\section{Introduction}

\subsection{General setting and the operator}

Let $n\geq1$ and $\Omega\subset\R^n$ be a bounded, connected domain of class $C^{1,1}$. Let $d:\overline\Omega\to[0,+\infty)$ be the distance to the boundary,
\[
d(x)=\dist(x,\p\Omega).
\]
It is convenient to smooth out the distance function by taking $\delta\in C^{1,1}(\overline{\Omega})$ such that $\delta=d$ when $d<d_0$, for a small $d_0>0$.

Let $s\in(0,1)$. Write $B_r(x)=\set{y\in\R^n:|y-x|<r}$ and $B_r=B_r(0)$. We introduce the operator
\[\begin{split}
\cL_\Omega u(x)
&=
    C_{n,s} d(x)^{2s-2}
    \PV\int_{B_{d(x)}(x)}
        \dfrac{u(x)-u(y)}{|x-y|^{n+2s}}
    \,dy\\
&=
    \dfrac{C_{n,s}}{2}
    d(x)^{2s-2}
    \int_{B_{d(x)}}
        \dfrac{2u(x)-u(x+y)-u(x-y)}{|y|^{n+2s}}
    \,dy,
\end{split}\]
where the normalization constant is given by
\begin{equation}\label{eq:Cns}
C_{n,s}^{-1}
=\dfrac{1}{2}r^{2s-2}
    \int_{B_r}
        \dfrac{
            y_n^2
        }{
            |y|^{n+2s}
        }
    \,dy,
    \quad \forall r>0,
    \quad \textor \quad
C_{n,s}=\dfrac{2n(2-2s)}{|\bS^{n-1}|}
    =4(1-s)\dfrac{
        \Gamma(\frac{n+2}{2})
    }{
        \pi^{\frac{n}{2}}
    }.
\end{equation}
This is an isotropic regional fractional Laplacian, and the factor $d(x)^{2s-2}$ is inserted to ensure that $\cL_\Omega u(x)$ converges as $x\to\p\Omega$ to a nontrivial limit, namely $-\Delta u(x)$. See \Cref{lem:lim-op}.

Probabilistically speaking, the operator $\cL_\Omega$ generates a L\'{e}vy type process where a particle at $x\in\Omega$ jumps randomly and isotropically in the largest possible ball $B_{d(x)}(x)$ contained inside $\Omega$.

We list some characteristic properties and consequences.

\begin{itemize}
\item $\cL_\Omega$ enjoys a mid-range maximum principle \Cref{prop:MP-mid}, whose strength lies between the local one for $-\Delta$ and the global one for $(-\Delta)^s$: no absolute minima exist in the region where $\cL_\Omega u\geq 0$, provided that $u$ is non-negative in the domain of interaction outside of that region. As a result, local barriers suffice to control boundary behaviors.

\item The domain of interaction depends on the point of evaluation. Thus, extra effort is needed in the construction of barriers in \Cref{sec:barrier}, even following the established idea \cite{RS5}.

\item $\cL_{\Omega}$ is of order $2s$ but scales quadratically and satisfies the classical Hopf lemma, see \Cref{lem:scaling} and \Cref{lem:BH-2sided}. 

\item $\cL_\Omega$ is not variational. To see this, take $\Omega=(0,1)$. Upon integrating by parts, one immediate sees ``hidden boundary terms'' at $x=1/2$. In higher dimensions the ``hidden boundary'' can be thought of as points having at least two projections to the boundary. Unfortunately, all these points contribute in such a different way that the resulting expression would not be manageable. Consequently, no weak formulations due to integration by parts can be expected. Existence is to be established in the viscosity sense, in \Cref{sec:visc}.

\end{itemize}

Generic and universal constants are denoted by $C,c$. They depend only on $n$, $s$ and $\Omega$.

\subsection{Main results}

Consider the Dirichlet problem
\begin{equation}\label{eq:main}\begin{cases}
\cL_\Omega u=f
    & \Textin \Omega,\\
u=0
    & \texton \p\Omega.
\end{cases}\end{equation}
We study the regularity properties of its classical solutions $u$, meaning that $u\in C^{2s+}(\Omega)\cap C(\overline\Omega)$, where $C^{2s+}(\Omega)=\bigcup_{\beta>0}C^{2s+\beta}(\Omega)$.

Our main results are the following.

\begin{thm}[\emph{A priori} regularity up to boundary]
\label{thm:main}
Suppose 
$\Omega$ is of class $C^{1,1}$, $s\in(0,1)$ and $u\in C^{2s+}(\Omega)\cap C(\overline\Omega)$ solves \eqref{eq:main}. Then there exists $\alpha_0(s)\in(0,1)$ such that for any $\alpha\in(0,\alpha_0(s))$, the following holds.
If either
\begin{enumerate}
\item $s\in(\frac12,1)$, $f\in L^\infty(\Omega)$; or 
\item $s\in(0,\frac12]$, $f\in C^{\alpha+1-2s}(\Omega)$, 
\end{enumerate}
then $u\in C^{1,\alpha}(\overline{\Omega})$, with
\[
\norm[C^{1,\alpha}(\overline{\Omega})]{u}\leq C.
\]
Here $C$ depends on $n,s,\Omega$ and the corresponding norm of $f$.
\end{thm}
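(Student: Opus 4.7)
The plan is to couple a sharp boundary expansion of $u$ with standard interior estimates. Since $\cL_\Omega$ satisfies the classical Hopf lemma and scales quadratically (\Cref{lem:scaling}, \Cref{lem:BH-2sided}), the natural boundary profile is linear in $d(x)$, not $d(x)^s$ as for $\Ds$. I would therefore aim to show that at each $x_0\in\p\Omega$ there exists $a_0=a_0(x_0)$, depending Hölder-continuously on $x_0$, such that
\[
|u(x)-a_0 d(x)|\leq C|x-x_0|^{1+\alpha}\quad\text{for all } x\in\Omega\cap B_\rho(x_0).
\]
Combining this boundary expansion with an interior $C^{1,\alpha}$ estimate and using a Campanato-type characterization would yield $u\in C^{1,\alpha}(\overline\Omega)$.

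I would carry this out in three steps. First, the two-sided barriers from \Cref{sec:barrier}, together with the mid-range maximum principle \Cref{prop:MP-mid}, give $|u(x)|\leq C d(x)$ on all of $\Omega$. Second, to produce the expansion at a boundary point $x_0$, iterate on dyadic scales $r_k=2^{-k}\rho$: assuming $|u-a^{(k)}d|\leq Cr_k^{1+\alpha}$ on $\Omega\cap B_{r_k}(x_0)$, rescale via $\tilde u(y)=r_k^{-(1+\alpha)}\bigl(u(x_0+r_ky)-a^{(k)}d(x_0+r_ky)\bigr)$ using \Cref{lem:scaling}, and apply the barriers in the rescaled unit-scale problem to produce an improved coefficient $a^{(k+1)}=a^{(k)}+O(r_k^\alpha)$. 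The sequence $\set{a^{(k)}}$ is Cauchy and converges to the desired $a_0$. The dichotomy on $s$ in the theorem emerges from the Hölder scale of the right-hand side after rescaling: for $s>\frac12$ the quadratic scaling of $\cL_\Omega$ against its order $2s$ absorbs the $L^\infty$ norm of $f$; for $s\leq\frac12$ one must instead control a Hölder seminorm of $f$ of exponent $\alpha+1-2s$ so that the rescaled right-hand side stays of size $o(r_k^\alpha)$. Third, an interior Schauder-type estimate — justified by freezing the distance function, so that on small balls inside $\Omega$ the operator has an almost fixed radius of interaction and the kernel is singular only at the diagonal — gives $C^{1,\alpha}$ regularity in the bulk.

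The main obstacle I expect is that the domain of interaction $B_{d(x)}(x)$ depends on $x$, so that rescaling does not preserve $\cL_\Omega$; only in the blow-up limit at a boundary point does one recover the half-space Laplacian (\Cref{lem:lim-op}). Each step of the iteration therefore inherits a quantitative perturbation error measuring how much the rescaled $\cL_\Omega$ differs from its boundary limit. Balancing the barrier-driven improvement at scale $r_k$ against this geometric defect is the core technical point and forces $\alpha$ to lie below a threshold $\alpha_0(s)$ governed by the rate at which $\cL_\Omega$ flattens to its local counterpart near $\p\Omega$.
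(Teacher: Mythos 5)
Your framing — reduce $C^{1,\alpha}$ to the pointwise boundary expansion $|u(x)-a_0 d(x)|\leq C|x-x_0|^{1+\alpha}$, then close with interior estimates — matches the paper exactly, as does the role you assign the barriers (the $\alpha=0$ bound $|u|\leq Cd$, \Cref{lem:BH-f-global}) and the explanation of the $s$-dichotomy via the scaling of the right-hand side. What differs is the mechanism for promoting $\alpha=0$ to $\alpha>0$, and here your proposal has a genuine gap. You assert that rescaling $\tilde u(y)=r_k^{-(1+\alpha)}\bigl(u(x_0+r_ky)-a^{(k)}d(x_0+r_ky)\bigr)$ and then ``applying the barriers in the rescaled unit-scale problem'' produces $a^{(k+1)}=a^{(k)}+O(r_k^\alpha)$. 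But the barriers only ever yield a one-sided comparison with $d$ — i.e.\ $|\tilde u|\lesssim \tilde d$, which at unit scale is just $O(1)$ — and cannot by themselves detect that $\tilde u$ is close to a multiple of $\tilde d$. An improvement-of-flatness step requires knowing that approximate solutions of the blow-up (half-space) equation with $(1+\alpha)$-growth are well-approximated by linear profiles; that is precisely a Liouville-type classification, not a barrier estimate.

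The paper supplies this missing ingredient as \Cref{prop:Liouville}, built on the 1D boundary Harnack inequality \Cref{prop:BH} (itself proved by an improvement-of-oscillation scheme for the quotient $u(x)/x$, \Cref{lem:BH-osc}). The paper then argues by compactness and contradiction rather than by a direct dyadic iteration: it selects $Q(r)$ by $L^2$-projection, extracts a blow-up sequence normalized by the critical quantity $\theta(r_m)$ so that $\|v_m\|_{L^\infty(B_1)}=1$ and $\int_{B_1}v_m\,d=0$, shows via \Cref{prop:reg-bdry-global} that $v_m$ converges to a half-space solution with the growth bound, classifies the limit as $c_0(x\cdot e)$ by \Cref{prop:Liouville}, and derives $c_0=0$ from the orthogonality condition — contradicting the normalization. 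Your direct iterative route is in principle viable, but you would need to replace ``apply the barriers'' in the improvement step by an explicit approximation lemma equivalent to \Cref{prop:Liouville} (or a quantitative boundary Harnack inequality for $u/d$), without which the coefficient cannot improve. You should also notice that the threshold $\alpha_0(s)$ is not determined, as you suggest, by ``how fast $\cL_\Omega$ flattens near $\partial\Omega$''; it is the exponent $\alpha_*$ produced by \Cref{prop:BH}, and the proof of \Cref{prop:Liouville} uses $\alpha<1$ essentially (to rule out quadratic growth), which is the real source of the restriction to $C^{1,\alpha}$ rather than higher regularity.
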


Higher regularity up to the boundary remains open. There are two difficulties:
\begin{itemize}
\item The condition $\alpha<1$ is crucially used in the proof of \Cref{prop:Liouville}, where the quadratic growth (linear in $x'$ times linear in $x_n$) contradicts the control $|x|^{1+\alpha}$.
\item A complete study of the action of $\cL_{\R^n_+}$ on monomials is missing (\Cref{lem:mono}).
\end{itemize}

\begin{thm}[Existence]
\label{thm:main-exist}
Suppose $\Omega$ is of class $C^{1,1}$, $s,\beta\in(0,1)$ so that $\beta+2s$ is not an integer, $f\in C^\beta(\Omega)$.  
Then there exists a unique $u\in C^{2s+\beta}(\Omega)\cap C(\overline{\Omega})$ solving \eqref{eq:main}. Moreover, if $\beta>(1-2s)_+$, then $u\in C^{1,\alpha}(\overline{\Omega})$, for some $\alpha\in(0,1)$ given by \Cref{thm:main}.
\end{thm}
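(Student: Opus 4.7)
The plan is to obtain existence via Perron's method in the viscosity sense, upgrade the viscosity solution to a classical solution by interior regularity, and then invoke the \emph{a priori} estimate \Cref{thm:main} for the $C^{1,\alpha}$ boundary regularity when $\beta>(1-2s)_+$.

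\emph{Step 1 (viscosity existence and uniqueness).} I would first construct a viscosity solution $u\in C(\overline{\Omega})$ of \eqref{eq:main} by Perron's method. The two key ingredients are a comparison principle for $\cL_\Omega$, which should follow from the mid-range maximum principle \Cref{prop:MP-mid} by standard adaptations to the viscosity framework, and a family of localized sub-/superbarriers at every boundary point, available from the barrier construction of \Cref{sec:barrier} together with the Hopf-type bound \Cref{lem:BH-2sided}. The supremum of admissible subsolutions then provides a continuous viscosity solution vanishing on $\p\Omega$. Uniqueness follows from the comparison principle applied to the difference of any two solutions.

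\emph{Step 2 (interior classical regularity).} I would next upgrade $u$ to $u\in C^{2s+\beta}_{\loc}(\Omega)$ by localized rescaling. Fix $x_0\in\Omega$ and set $\rho=d(x_0)/2$; then for $y\in B_{1/2}$, $d(x_0+\rho y)/\rho\in[3/2,5/2]$. Using \Cref{lem:scaling}, the rescaled function $v(y)=u(x_0+\rho y)$ solves an equation whose leading operator is an order-$2s$ integro-differential operator with kernel of the form $\chi_{B_{r(y)}(y)}(w)/|y-w|^{n+2s}$, where $r(y)\in[3/2,5/2]$ depends $C^{1,1}$ on $y$. This kernel is uniformly elliptic in the Caffarelli--Silvestre sense and smooth in the spatial variable, so the standard interior Hölder-to-Schauder theory for variable-kernel nonlocal operators yields $v\in C^{2s+\beta}(B_{1/4})$. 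Rescaling back gives $u\in C^{2s+\beta}_{\loc}(\Omega)$; in particular $u$ is a classical solution. The condition $\beta+2s\notin\bN$ avoids the Schauder threshold.

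\emph{Step 3 (boundary regularity).} If $\beta>(1-2s)_+$ then $f$ satisfies the hypothesis of \Cref{thm:main}: for $s>\tfrac{1}{2}$ we have $f\in C^\beta(\Omega)\subset L^\infty(\Omega)$, while for $s\leq\tfrac{1}{2}$ the bound $\beta>1-2s$ yields $f\in C^{\alpha+1-2s}(\Omega)$ for any $\alpha$ with $0<\alpha<\min(\alpha_0(s),\beta-(1-2s))$. \Cref{thm:main} then delivers $u\in C^{1,\alpha}(\overline{\Omega})$.

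\emph{Main obstacle.} The delicate point will be Step 2, since the kernel of $\cL_\Omega$ varies with the evaluation point through the cutoff radius $d(x)$. I would expect a freezing-of-coefficients argument to work: on a sub-ball around $x_0$ the cutoff radius $d$ varies by a controllable amount, so the error between $\cL_\Omega$ and the operator with cutoff frozen at $d(x_0)$ is an integral over a thin annulus that can be absorbed perturbatively using the Hölder continuity of $u$ and the quadratic scaling \Cref{lem:scaling}. This reduces the interior question to well-established fixed-kernel theory.
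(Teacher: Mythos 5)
Your three-step strategy (Perron viscosity existence, interior upgrade to a classical solution, then the a priori estimate) is precisely the one the paper follows in \Cref{prop:exist} and in the two-line proof of \Cref{thm:main-exist}, and Steps 1 and 3 match the paper closely. Step 2, however, takes a genuinely different route. The paper does not freeze the cutoff radius and invoke variable-kernel Schauder theory; instead it extends $u$ by zero, rewrites the equation as $\Ds u = F[u]$ in $\Omega$ with
\[
F[u](x)=\frac{c_{n,s}}{C_{n,s}}\left(C_{n,s}\int_{B_{d(x)}^c}\frac{u(x)-u(x+y)}{|y|^{n+2s}}\,dy+f(x)\,d(x)^{2-2s}\right),
\]
observes that $F[u]$ inherits the regularity of $u$ (as already worked out in \Cref{lem:int-est}), and bootstraps using the restricted fractional Laplacian theory from \cite{FR-book-s}. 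That route is cleaner because it moves both the variable truncation radius and the prefactor $d(x)^{2s-2}$ entirely into the right-hand side.

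Your freezing-of-coefficients alternative would also work, but a couple of the claims in Step 2 need tightening. First, the rescaled kernel $K(y,z)=r(y)^{2s-2}\chi_{|z|<r(y)}/|z|^{n+2s}$ is not ``smooth in the spatial variable'': it has a jump at $|z|=r(y)$, and its $y$-dependence is only controlled in the integrated sense $\int|K(y,z)-K(y',z)|\,dz\lesssim|y-y'|$, not pointwise. Second, the two-sided Caffarelli--Silvestre ellipticity bounds fail for $|z|>r(y)$ where the kernel vanishes; one must argue separately that the far-field contribution is lower order (which is true because $u$ is bounded and the domain of integration is at a fixed positive distance). Neither obstacle is fatal—indeed your own perturbative annulus argument is essentially how one would handle them—but ``standard variable-kernel Schauder theory'' should not be cited as if it applies off the shelf. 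The paper's $\Ds$-rewriting sidesteps both issues, at the cost of tracking a non-symmetric nonlocal term $F[u]$ in the bootstrap, which the paper verifies is $C^\beta$ whenever $u$ is.

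One small bookkeeping remark on Step 3: for $s=\tfrac12$ the condition $\beta>(1-2s)_+=0$ is vacuous, and \Cref{thm:main}(2) then asks for $f\in C^{\alpha}$, so you need $\alpha<\beta$; your formula $\alpha<\min(\alpha_0(s),\beta-(1-2s))$ does reduce to this, so the statement is correct, but it is worth saying explicitly that $s=\tfrac12$ falls under case (2) of \Cref{thm:main}, not case (1).
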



\subsection{Main ideas}

Let us explain the heuristics of the proof. By definition, $u$ is $C^{1,\alpha}$ at $0\in\p\Omega$ if
\[
u(x)
=c_0(x\cdot \nu)
    +O(|x|^{1+\alpha}),
        \quad \textas x\to 0.
\]
This is implied (see \Cref{lem:expand}) by the expansion
\begin{equation}\label{eq:expand-2}
\dfrac{u(x)}{d(x)}
=c_0+O(|x|^{\alpha}),
    \quad \textas x\to 0,
\end{equation}
\emph{i.e.} $u/d$ is $C^{\alpha}$ up to $0\in\p\Omega$. 
By building suitable barriers in \Cref{sec:barrier}, we will be able to obtain global H\"{o}lder regularity (\Cref{prop:reg-bdry-global}). This allows the use of a blow-up argument to reduce the problem to a half plane. 

It then suffices to prove a Liouville theorem (\Cref{prop:Liouville}) for solutions to the homogeneous equation in the half space, namely
\[\begin{cases}
\cL_{\R^n_+}u=0
    & \Textin \R^n_+,\\
u=0
    & \texton \p\R^n_+,
\end{cases}\]
under the growth $u(x)=O(|x|^{1+\alpha})$: the only solution is $u=cx_n$.
Since the homogeneous equation is preserved under scaling and tangential differentiations, solutions can be shown to be one dimensional (1D) (e.g. \Cref{lem:Liouville-mild}). Hence, we need to show that solutions $u(x',x_n)$, which is independent of $x'$, to
\[\begin{cases}
\cL_{\R_+^n}u(x_n)=0
    & \text{ for } x_n\in\R_+,\\
u(0)=0,
\end{cases}\]
which grow no faster than $x_n^{1+\alpha}$ must be linear (\Cref{lem:Liouville-1D}). This is in turn implied by the boundary regularity in the half-line (\Cref{prop:BH}), namely
\[
\dfrac{u(x_n)}{x_n}\in C^{\alpha} \quad \text{ up to } 0.
\]
To show this, one simply proves a boundary Harnack inequality (\Cref{lem:BH-quotient}) and an improvement of oscillation (\Cref{lem:BH-osc}).

%

\medskip


\subsection{Related works}

The boundary Harnack inequality for a nonlocal elliptic operator in non-divergence form is proved by Ros-Oton--Serra in \cite[Theorem 1.2]{RS4}. A similar type of nonlocal operator with fixed horizon (range of interaction) at every point has been considered by Bellido and Ortega \cite{BO20}.

\subsection{Generalizations}

We expect that the techniques introduced in this paper should be able to prove similar results in the parabolic setting, and when $\cL_{\Omega}$ is replaced by an analogous integro-differential operator of order $2s$ with any homogeneous kernel.

\section{Preliminary results}

Clearly $\cL_\Omega$ satisfies the global maximum principle.

\begin{lem}[Global maximum principle]
\label{lem:MP-global}
Suppose $u\in C^{2s+}(\Omega)\cap C(\overline\Omega)$ solves
\begin{equation}
\begin{cases}
\cL_\Omega u\geq 0
    & \Textin \Omega,\\
u\geq 0
    & \texton \p\Omega.
\end{cases}
\end{equation}
Then either $u\equiv 0$ in $\overline\Omega$ or $u>0$ in $\Omega$.
\end{lem}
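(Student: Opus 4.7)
The plan is to prove the statement in two classical steps: a weak maximum principle ($u\geq 0$ on $\overline{\Omega}$) followed by a propagation argument for the strong dichotomy.

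For the weak part, I would argue by contradiction. Suppose $m:=\min_{\overline{\Omega}}u<0$. Since $u\geq 0$ on $\p\Omega$ and $u\in C(\overline{\Omega})$, this minimum is attained at some interior point $x_0\in\Omega$. Working with the symmetric representation of $\cL_{\Omega}$ (the second line of its definition), the integrand
\[
2u(x_0)-u(x_0+y)-u(x_0-y)
\]
is nonpositive for every $y\in B_{d(x_0)}$, because $u(x_0)$ is the global minimum and, crucially, $B_{d(x_0)}(x_0)\subset\Omega$ so that the comparison values $u(x_0\pm y)$ are defined. Combined with the assumption $\cL_{\Omega}u(x_0)\geq 0$ and continuity of $u$, this integrand must vanish identically on $B_{d(x_0)}$, forcing $u\equiv m$ on $B_{d(x_0)}(x_0)$.

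Now set $S:=\set{x\in\Omega:u(x)=m}$. The previous step shows that $S$ is open in $\Omega$ (for any $x_0\in S$, the whole ball $B_{d(x_0)}(x_0)$ lies in $S$), while $S$ is closed in $\Omega$ by continuity. By connectedness of $\Omega$ and nonemptiness of $S$, $S=\Omega$, hence $u\equiv m$ on $\overline{\Omega}$ by continuity. This contradicts $u\geq 0$ on $\p\Omega$, proving $u\geq 0$ in $\overline{\Omega}$. The strong dichotomy then follows by the same propagation: if $u\not\equiv 0$ on $\overline{\Omega}$ but $u(x_0)=0$ at some $x_0\in\Omega$, then $x_0$ is an interior minimum by the nonnegativity just established, and the ball-propagation plus connectedness argument gives $u\equiv 0$ in $\overline{\Omega}$, a contradiction.

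The only delicate point is the passage from the P.V.\ form of $\cL_{\Omega}u(x_0)$ to the symmetric form used above, which is justified because $u\in C^{2s+}(\Omega)$ makes the symmetrized integrand $2u(x_0)-u(x_0+y)-u(x_0-y)$ absolutely integrable against $|y|^{-n-2s}$ near the origin. I do not expect a genuine obstacle here; the argument is the standard nonlocal adaptation of the classical strong maximum principle, rendered particularly transparent by the fact that the domain of interaction $B_{d(x_0)}(x_0)$ is fully contained in $\Omega$, so no values outside $\Omega$ ever enter the comparison.
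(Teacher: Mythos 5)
Your proof is correct and follows essentially the same approach as the paper's: both hinge on the observation that at an interior minimum $x_0$ the symmetric integrand is nonpositive, so $\cL_\Omega u(x_0)\geq 0$ forces $u$ to be constant on the interaction ball $B_{d(x_0)}(x_0)$. The only differences are presentational: you make the connectedness/propagation step explicit (the paper leaves it implicit for the strong dichotomy), and you obtain the weak maximum principle via that same propagation, whereas the paper takes the shortcut of noting that $B_{d(x_0)}(x_0)$ touches $\p\Omega$, so the minimal value is directly pinned down by the boundary data.
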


\begin{proof}
At any interior minimum $x_0$, $\cL_\Omega u(x_0)\leq 0$, with strict inequality unless $u\equiv u(x_0)$ in $B_{d(x_0)}(x_0)$. But the latter ball contains a sequence converging to $\p\Omega$ where $u\geq 0$.
\end{proof}

A more careful examination yields the following version of maximum principle. It is especially useful to study the blow-up equation when the domain becomes unbounded.
\begin{prop}[Mid-range strong maximum principle]
\label{prop:MP-mid}
Let ${U}\subset\R^n$ be a domain that is not necessarily bounded. Suppose $G$ is non-empty, bounded, open in $U$. The domain interacting with $G$ is
\[
G_*=\bigcup_{y\in G} B_{d_{U}(y)}(y)
\neq\varnothing.
\]
Suppose $u\in C^{2s+}(G)\cap C(\overline{G_*})$, is a solution to
\[\begin{cases}
\cL_U u \geq 0
    & \Textin G,\\
u\geq 0
    & \Textin \overline{G_*}\setminus G. 
\end{cases}\]
Then $u\geq 0$ in $G$.
\end{prop}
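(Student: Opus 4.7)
The strategy is an argument by contradiction, modelled on the proof of the global maximum principle (\Cref{lem:MP-global}) but exploiting the interaction set $\overline{G_*}$ in place of $\overline{\Omega}$. Suppose $m := \inf_G u < 0$. Since $G \subset G_*$ and $u \in C(\overline{G_*})$, the infimum is attained on the compact set $\overline{G}$. Because $\p G \subset \overline{G}\setminus G \subset \overline{G_*}\setminus G$, the hypothesis gives $u \geq 0$ on $\p G$, so the infimum is actually realised at some interior point $x_0 \in G$.

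The key computation is to evaluate the equation at $x_0$. For every $y \in B_{d_U(x_0)}$, both points $x_0 \pm y$ lie in $B_{d_U(x_0)}(x_0) \subset G_* \subset \overline{G_*}$; whether they fall in $G$ or in $\overline{G_*}\setminus G$, in either case $u(x_0 \pm y) \geq m$. Hence the symmetric integrand $2u(x_0) - u(x_0+y) - u(x_0-y)$ is pointwise non-positive, so $\cL_U u(x_0) \leq 0$. Combined with the hypothesis, equality must hold; the vanishing of the integrand almost everywhere, together with $u(x_0 \pm y) \geq m$, forces $u(x_0 \pm y) = m$ a.e. in $B_{d_U(x_0)}$, and by continuity of $u$ on $\overline{G_*}$, $u \equiv m$ on the entire ball $B_{d_U(x_0)}(x_0)$.

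The ball $B_{d_U(x_0)}(x_0)$ cannot meet $\overline{G_*}\setminus G$, because any such point would satisfy $u = m < 0 \leq u$. Introducing the coincidence set $A = \{x \in G : u(x) = m\}$, the same argument applies at every $x \in A$ and yields $B_{d_U(x)}(x) \subset A$. Thus $A$ is open in $\R^n$; it is also relatively closed in $G$ by continuity, so $A$ is a non-empty union of connected components of $G$.

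To close, pick a connected component $G_0$ of $G$ contained in $A$. Since $G_0 \subset G$ is bounded and non-empty, $\p G_0 \neq \varnothing$. As components of the open set $G$ are relatively closed in $G$, we have $\p G_0 \cap G = \varnothing$, and $\p G_0 \subset \overline{G_0} \subset \overline{G_*}$, so $\p G_0 \subset \overline{G_*}\setminus G$. Continuity of $u$ then forces $u \equiv m < 0$ on $\p G_0$, contradicting the hypothesis $u \geq 0$ on $\overline{G_*}\setminus G$. The main obstacle is bookkeeping: one must track carefully the distinction between $\p G$, $\p G_*$, and $\p U$, and handle the possible lack of connectedness of $G$ by the clopen-component argument above rather than by a direct propagation along a path.
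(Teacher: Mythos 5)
Your proof is correct and rests on the same mechanism as the paper's: evaluating the operator at an interior minimum $x_0$ with $u(x_0)<0$, observing that the symmetric second difference is pointwise non-positive on $B_{d_U(x_0)}(x_0)\subset G_*$, and forcing equality to conclude $u\equiv u(x_0)$ on the ball. Where you diverge is in the final propagation step. The paper's proof is terse here — after obtaining equality it jumps directly to ``$u\equiv u(x_0)=0$ in $G$'' — and the intended route is essentially that $\overline{B_{d_U(x_0)}(x_0)}$ touches $\partial U$, hence contains a point of $\overline{G_*}\setminus G$ where $u\geq 0$, forcing $u(x_0)\geq 0$ at once. You instead mount a clopen-component argument: the coincidence set $A=\{u=m\}$ in $G$ is open (by the ball propagation) and relatively closed, so it contains a whole component $G_0$, and then $\partial G_0\subset\overline{G_*}\setminus G$ gives the contradiction. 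Both routes close the gap rigorously; yours makes the propagation fully explicit and does not rely on identifying a touch point on $\partial U$, which is a small but genuine robustness gain (e.g.\ it works verbatim if $\partial U$ is irregular), at the cost of a few extra lines. The bookkeeping of $\partial G$ versus $\partial G_0$ versus $\partial G_*$ is handled correctly.
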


\begin{proof}
If $\min_{\overline{G}}u$ is attained on $\p G \subset \overline{G_*}\setminus G$ and not in $G$, then $u\geq 0$ in $G$ by the Dirichlet condition. If $u(x_0)=\min_{G}u\leq 0$, then
\[\begin{split}
0
&\leq \cL_U u(x_0)\\
&=
    C_{n,s}
    d_U(x_0)^{2s-2}\,
    \PV\int_{B_{d_{U}(x_0)}(x_0)}
        \dfrac{
            u(x_0)-u(y)
        }{
            |x_0-y|^{n+2s}
        }
    \,dy\\
&\leq
    C_{n,s}
    d(x_0)^{2s-2}\,
\left(
    \PV\int_{B_{d_U(x_0)}(x_0) \cap G}
        \dfrac{
            u(x_0)-u(y)
        }{
            |x_0-y|^{n+2s}
        }
    \,dy
    +u(x_0)
    \int_{
        B_{d_{U}(x_0)}(x_0)
        \setminus G
    }
        \dfrac{
            1
        }{
            |x_0-y|^{n+2s}
        }
    \,dy
\right)\\
&\leq 0.
\end{split}\]
Thus equality holds and $u\equiv u(x_0)=0$ in $G$.
\end{proof}

The interior Harnack inequality is known to DiCastro--Kuusi--Palatucci \cite{dCKP}.

\begin{lem}[Interior Harnack inequality]
\label{lem:Harnack-1}
Suppose $B_r(z)\subset \Omega\subset\R^n$, $n\geq1$. If
\[
\cL_{\Omega} u=0
    \quad \Textin B_r(z),
\]
then for any $\eta>0$, there exists $C_0(n,s,\Omega,\eta)>0$ such that
\[
u(x)\leq C_0(\eta) u(y)
	\quad \forall x,y\in B_{(1-\eta)r}(z).
\]
\end{lem}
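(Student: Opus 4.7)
My plan is to reinterpret $\cL_\Omega u = 0$ on $B_r(z)$ as a linear nonlocal equation of order $2s$ whose kernel is two-sided comparable to $|x-y|^{-n-2s}$ near the diagonal on the Harnack ball, and then invoke the interior Harnack inequality for such operators of Di Castro--Kuusi--Palatucci in its $p=2$ specialization.

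First, since $C_{n,s}>0$ and $d(x)>0$ in $\Omega$, I would divide out the positive prefactor $C_{n,s}d(x)^{2s-2}$: the equation $\cL_\Omega u = 0$ in $B_r(z)$ is equivalent to
\[
\PV \int_{\R^n} (u(x) - u(y)) K(x,y) \, dy = 0
\quad \text{in } B_r(z),
\qquad K(x,y) = |x-y|^{-n-2s} \chi_{\set{|x-y|<d(x)}}.
\]
Next, for any $x \in B_{(1-\eta)r}(z)$, the inclusion $B_{\eta r}(x)\subset B_r(z)\subset\Omega$ gives $d(x)\geq \eta r$, so the cutoff is inactive on $\set{|x-y|<\eta r}$ and
\[
K(x,y) = |x-y|^{-n-2s} \text{ on } \set{|x-y|<\eta r},
\qquad 0\leq K(x,y)\leq |x-y|^{-n-2s} \text{ on } \R^n,
\]
with $K(x,y)=0$ for $|x-y|\geq \diam(\Omega)$. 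In particular, both a two-sided pointwise bound near the diagonal and a uniform $L^1$ tail bound $\int_{|x-y|>\rho} K(x,y)\, dy \leq C \rho^{-2s}$ hold with constants depending only on $n, s, \Omega$.

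With these two ingredients in hand, the interior Harnack inequality of Di Castro--Kuusi--Palatucci applies on any sub-ball of radius $\eta r/2$ contained in $B_{(1-\eta)r}(z)$. Chaining the resulting local estimate over a finite cover of $B_{(1-\eta)r}(z)$ by such sub-balls, whose number depends only on $\eta$ and $n$, produces the global Harnack constant $C_0(\eta)$ depending only on $n, s, \Omega$ and $\eta$.

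The main obstacle is the asymmetry $K(x,y) \neq K(y,x)$, since the cutoff radius depends on $x$ alone and makes the equation non-variational. However, the Caccioppoli and logarithmic estimates and the ensuing De Giorgi iteration underlying the DCKP Harnack rely only on pointwise lower bounds on the kernel in a fixed neighborhood of the diagonal together with the $L^1$ tail bound, neither of which uses symmetry, so the application to our setting amounts to a routine verification of hypotheses.
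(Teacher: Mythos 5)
The paper provides no written proof for this lemma, only a citation to Di Castro--Kuusi--Palatucci. Your reduction of $\cL_\Omega u = 0$ to a kernel equation with $K(x,y) = |x-y|^{-n-2s}\chi_{\{|x-y|<d(x)\}}$, together with the verification of two-sided bounds near the diagonal on the Harnack ball and a uniform tail estimate, is the right first step. However, your final paragraph contains a genuine gap. The claim that the Caccioppoli, logarithmic, and De Giorgi lemmas underlying the DCKP Harnack ``do not use symmetry'' is incorrect. The DCKP framework is variational: weak solutions are defined through the bilinear form $\iint (u(x)-u(y))(\phi(x)-\phi(y))K(x,y)\,dx\,dy$, and the Caccioppoli estimates are obtained by testing with $(u-k)_\pm\phi^2$ and then exchanging $x$ and $y$ under the integral, a step that requires $K(x,y)=K(y,x)$. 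For an asymmetric kernel, the pointwise identity $\PV\int(u(x)-u(y))K(x,y)\,dy=0$ does not translate into vanishing of that bilinear form; symmetrizing $K$ leaves an antisymmetric remainder whose contribution to the equation is comparable to $\sup u$ itself, so the Harnack inequality for inhomogeneous symmetric equations does not close. The paper's own emphasis in the Introduction that $\cL_\Omega$ is not variational and admits no weak formulation is precisely a warning that the DCKP machinery is unavailable here.

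What the argument actually needs is a non-divergence (viscosity/Krylov--Safonov-type) Harnack inequality, as in Caffarelli--Silvestre (2009) or its refinements. There the hypotheses are exactly the ones you have verified --- a pointwise lower bound on the kernel for small increments and tail integrability --- together with evenness of the kernel in the increment, $K(x,x+h)=K(x,x-h)$, which your kernel satisfies; exchange-symmetry $K(x,y)=K(y,x)$ is not required. A careful write-up would also have to address that the kernel lower bound fails for $|x-y|\geq d(x)$, so one must invoke a version of the Harnack inequality in which non-degeneracy is only assumed at scales comparable to the Harnack ball. (Separately, note that both the lemma as stated and your proposal omit the hypothesis $u\geq 0$ on the relevant interaction domain, which any Harnack inequality requires; the paper's applications, e.g.\ \Cref{lem:BH-2sided}, do supply it.)
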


Now we state the interior estimates which follows from the corresponding result for the restricted fractional Laplacian \cite{RS1,RS3}. We denote
\begin{equation}\label{eq:L1mu}
\norm[L^1_{\mu}(\Omega)]{u}
=\int_{\Omega}
    \dfrac{
        |u(y)|
    }{
        1+|y|^{n+\mu}
    }
\,dy,
    \quad \textfor \mu\in\R.
\end{equation}


\begin{lem}[Interior estimates]
\label{lem:int-est}
Suppose $U\subset\R^n$ is not necessarily bounded, and $B_1\subset B_4\subset U$. Suppose $u\in C^{2s+}(\overline{B_1})\cap C(\overline{U})$ is a solution to
\[
\cL_{U} u=f
    \quad \Textin B_1,
\]
for $f\in L^\infty(B_1)$. Then there exists a constant $C=C(n,s,\epsilon)>0$ such that
\[\begin{cases}
\norm[C^{2s}(\overline{B_{1/2}})]{u}
\leq
    C\left(
        \norm[L^\infty(B_1)]{u}
        +\norm[L^1_{2s}(U)]{u}
        +\norm[L^\infty(B_1)]{d}^{2-2s}
            \norm[L^\infty(B_1)]{f}
    \right)
        & \textfor s\neq \frac12,\\
\norm[C^{1-\epsilon}(\overline{B_{1/2}})]{u}
\leq
    C\left(
        \norm[L^\infty(B_1)]{u}
        +\norm[L^1_{2s}(U)]{u}
        +\norm[L^\infty(B_1)]{d}^{2-2s}
            \norm[L^\infty(B_1)]{f}
    \right)
        & \textfor s=\frac12.
\end{cases}\]
Moreover, if $f\in C^\beta(\overline{B_1})$ for $\beta\in(0,1)$ and $\beta+2s$ is not an integer, then there exists $C=C(n,s,\beta)>0$ such that
\[
\norm[C^{2s+\beta}(\overline{B_{1/2}})]{u}
\leq C\left(
        \norm[C^\beta(\overline{B_1})]{u}
        +\norm[L^\infty(B_{d_1})]{u}
        +\norm[L^1_{2s}(U)]{u}
        +\norm[L^\infty(B_1)]{d}^{2-2s}
            \norm[C^\beta(\overline{B_1})]{f}
    \right),
\]
where $d_1=\norm[L^\infty(B_1)]{d}+2$. 
\end{lem}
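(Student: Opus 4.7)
The plan is to localize the equation on $B_{1/2}$, compare $\cL_U u$ with the standard restricted fractional Laplacian $(-\Delta)^s$ applied to a cut-off of $u$, and then invoke the classical interior estimates for $(-\Delta)^s$ from \cite{RS1,RS3}. Fix $\eta\in C_c^\infty(\R^n)$ with $\eta\equiv 1$ on $B_{3/4}$ and $\supp\eta\subset B_1$, and split $u=v+w$ with $v=u\eta$ (supported in $B_1$) and $w=u(1-\eta)$ (vanishing on $B_{3/4}$). For $x\in B_{1/2}$, since $d(x)\geq 3$, we have $\supp v\subset B_1\subset B_{d(x)}(x)$, so extending $v$ by zero gives
\[
\PV\int_{B_{d(x)}(x)}\frac{v(x)-v(y)}{|x-y|^{n+2s}}\,dy=c_{n,s}^{-1}(-\Delta)^s v(x)-\frac{|\bS^{n-1}|}{2s}\,v(x)\,d(x)^{-2s},
\]
where $c_{n,s}$ is the normalization constant of $(-\Delta)^s$. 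Plugging this into $\cL_U u=f$ rewrites the equation on $B_{1/2}$ as $(-\Delta)^s v=\tilde f$, where
\[
\tilde f=c_1\,d^{2-2s}f+c_2\,d^{-2s}v+c_3\,T,\qquad T(x):=\int_{B_{d(x)}(x)\setminus B_{3/4}}\frac{w(y)}{|x-y|^{n+2s}}\,dy,
\]
for explicit constants $c_i=c_i(n,s)$.

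For the $C^{2s}$ bound (or $C^{1-\epsilon}$ when $s=\frac12$), only $\tilde f\in L^\infty(B_{1/2})$ is required. The $f$- and $v$-terms are bounded by $\norm[L^\infty(B_1)]{d}^{2-2s}\norm[L^\infty(B_1)]{f}$ and $\norm[L^\infty(B_1)]{u}$ (using $d\geq 3$ on $B_{1/2}$), while the pointwise bound $|T(x)|\leq C\norm[L^1_{2s}(U)]{u}$ is immediate from $|x-y|\gtrsim 1+|y|$ for $x\in B_{1/2}$ and $|y|\geq 3/4$. The classical interior $L^\infty$-to-$C^{2s}/C^{1-\epsilon}$ estimate for $(-\Delta)^s$, applied to the compactly supported $v$ (which equals $u$ on $B_{1/2}$), then yields the first two claimed inequalities.

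For the Schauder case I additionally need $\tilde f\in C^\beta(B_{1/2})$. The $f$- and $v$-pieces inherit their $C^\beta$ bounds from those of $f$, $u$, and $d$ (the latter being $1$-Lipschitz and bounded below by $3$, so $d^{\pm r}\in C^\beta(B_{1/2})$). The main obstacle is the $C^\beta$ regularity of $T$, since the integration domain $B_{d(x)}(x)$ varies with $x$. I would decompose $T(x_1)-T(x_2)$ into (i) a kernel-difference term over $B_{d(x_1)}(x_1)\cap B_{d(x_2)}(x_2)$, handled via the bound $\bigl||x_1-y|^{-n-2s}-|x_2-y|^{-n-2s}\bigr|\leq C|x_1-x_2|(1+|y|)^{-n-2s-1}$ together with the weighted $L^1$ norm of $u$; and (ii) a symmetric-difference term. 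For (ii), the $1$-Lipschitz property of $d$ confines the symmetric difference to an annulus of width $\lesssim|x_1-x_2|$ around $\partial B_{d(x_1)}(x_1)$ of radius $\leq\norm[L^\infty(B_1)]{d}$, on which the kernel is $\sim d^{-n-2s}$; this yields a bound involving $\norm[L^\infty(B_{d_1})]{u}$ (controlling $w$ on that annulus), which accounts for the appearance of $d_1=\norm[L^\infty(B_1)]{d}+2$ in the statement.

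With the resulting $C^\beta$ bound on $\tilde f$ in hand, the Schauder interior estimate for $(-\Delta)^s$ from \cite{RS3} applied to $(-\Delta)^s v=\tilde f$ gives $v\in C^{2s+\beta}(\overline{B_{1/2}})$, and $v=u$ on $B_{1/2}$ delivers the final claimed inequality.
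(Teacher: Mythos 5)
Your proposal is correct in substance and follows the paper's overall strategy (reduce $\cL_U$ to the restricted fractional Laplacian $\Ds$ and invoke the interior estimates of Ros-Oton--Serra), but you use a genuinely different localization. The paper simply extends $u$ by zero outside $U$ and writes $\Ds u = g[u]$ directly, so that the ``tail'' piece of the source is $g_3(x)=\int_{U\cap B_{d(x)}(x)^c} u(z)|z-x|^{-n-2s}\,dz$, i.e. an integral \emph{outside} the interaction ball. You instead multiply by a cut-off $\eta$ and solve $\Ds v=\tilde f$ for the compactly supported $v=u\eta$, so your correction term $T(x)=\int_{B_{d(x)}(x)\setminus B_{3/4}} w(y)|x-y|^{-n-2s}\,dy$ sits \emph{inside} the interaction ball, over the region where $w=u(1-\eta)$ is nonzero. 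Both routes then run the same two-part analysis of the $C^\beta$ seminorm of the correction term (a kernel-difference piece giving $\norm[L^1_{2s+1}(U)]{u}|x_1-x_2|$, and a symmetric-difference annulus of width $\lesssim|x_1-x_2|$ at radius $\sim d$, producing the $\norm[L^\infty(B_{d_1})]{u}$ dependence). The paper's zero extension is slightly cleaner in that $\Ds u=g[u]$ holds on all of $B_1$ without any radius bookkeeping, whereas with your cut-off the rewritten equation holds only where $\eta\equiv1$, so you must either take $\eta\equiv1$ on $B_2$ (which is fine, since $B_4\subset U$ still guarantees $\supp v\subset B_{d(x)}(x)$ for $x\in B_1$) or accept a slightly smaller output ball and rescale. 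In exchange, your $v$ is compactly supported so the $L^1_{2s}$ tail norm in the RS1/RS3 estimate is trivially controlled by $\norm[L^\infty(B_1)]{u}$; the term $\norm[L^1_{2s}(U)]{u}$ in the final bound then enters only through $T$, rather than through both the tail norm and $g_3$ as in the paper. Either way the conclusions and constants match.
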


\begin{remark}
We notice that, because the operator $\cL_U$ degenerates away from the boundary, so does the estimate (through the term $\norm[L^\infty(B_1)]{d}^{2-2s}$).
\end{remark}

\begin{proof}
Let us extend $u$ by zero outside $U$. At each interior point $x\in U$, one can rewrite the equation in terms of the restricted fractional Laplacian, namely
\[
\Ds u=g[u](x)
    \quad \Textin B_1,
\]
where
\[
g[u](x)
=
\dfrac{
    c_{n,s}
}{
    C_{n,s}
}
\left(
    C_{n,s}\int_{B_{d(x)}^c}
        \dfrac{
            u(x)-u(x+y)
        }{
            |y|^{n+2s}
        }
    \,dy
    +f(x)d(x)^{2-2s}
\right),
\]
with $c_{n,s}=2^{2s}\pi^{-\frac{n}{2}}\Gamma(\frac{n+2s}{2})/|\Gamma(-s)|$ being the normalization constant for $\Ds$.
We first prove the $C^{2s}$ (or $C^{1-\epsilon}$) regularity.
Note that for $x\in B_1$, we have $d(x)\geq 3$ and $B_{d(x)}(x)^c\subset B_1^c$, so
\[\begin{split}
|g[u](x)|
&\leq C\left(
        d(x)^{-2s}|u(x)|
        +\int_{B_1^c}
            \dfrac{|u(y)|}{|y|^{n+2s}}
        \,dy
        +d(x)^{2-2s}|f(x)|
    \right)\\
&\leq
    C\left(
        \norm[L^\infty(B_1)]{u}
        +\norm[L^1_{2s}(U)]{u}
        +\norm[L^\infty(B_1)]{d}^{2-2s}
            \norm[L^\infty(B_1)]{f}
    \right).
\end{split}\]
Suppose first $s\neq\frac12$. By \cite[Theorem 1.1(a)]{RS3},
\[\begin{split}
\norm[C^{2s}(\overline{B_{1/2}})]{u}
&\leq C\left(
        \norm[L^\infty(B_1)]{u}
        +\norm[L^1_{2s}(U)]{u}
        +\norm[L^\infty(B_1)]{g}
    \right)\\
&\leq
    C\left(
        \norm[L^\infty(B_1)]{u}
        +\norm[L^1_{2s}(U)]{u}
        +\norm[L^\infty(B_1)]{d}^{2-2s}
            \norm[L^\infty(B_1)]{f}
    \right).
\end{split}\]
When $s=\frac12$, using again \cite[Theorem 1.1(a)]{RS3}, we replace accordingly the $C^{2s}$ norm by $C^{1-\epsilon}$ norm for any $\epsilon\in(0,1)$, with the constant depending also on $\epsilon$.

Now we prove the higher regularity. Up to multiplicative constants, we decompose $g=g_1+g_2+g_3$
where
\[\begin{split}
g_1(x)&=d(x)^{2-2s}f(x)\\
g_2(x)&=d(x)^{-2s}u(x)\\
g_3(x)&=\int_{U\cap B_{d(x)}(x)^c}
        \dfrac{u(z)}{|z-x|^{n+2s}}\,dz.
\end{split}\]
Since $\seminorm[C^\beta]{\varphi^p}\leq p \norm[L^\infty]{\varphi^{p-1}}\seminorm[C^\beta]{\varphi}$ for $\beta\in(0,1)$ and $p\in\R$, we use the bounds $3\leq d(x)\leq \inrad(U)$ and $\norm[C^{0,1}(\overline{U})]{d}\leq 1$ to control
\[\begin{split}
\seminorm[C^\beta(\overline{B_1})]{g_1}
&\leq C\left(
        \norm[L^\infty(B_1)]{d}^{2-2s}
            \seminorm[C^\beta(\overline{B_1})]{f}
        +\norm[L^\infty(B_1)]{d}^{1-2s}
            \norm[L^\infty(B_1)]{f}
    \right)
\leq C\norm[L^\infty(B_1)]{d}^{2-2s}
    \norm[C^\beta(\overline{B_1})]{f},\\
\seminorm[C^\beta(\overline{B_1})]{g_2}
&\leq C\norm[C^\beta(\overline{B_1})]{u}.
\end{split}\]
For $x,y\in \overline{B_1}$, we express
\[\begin{split}
g_3(x)-g_3(y)
&=\int_{U\cap B_{d(x)}(x)^c}
    u(z)\left(
        \dfrac{1}{|z-x|^{n+2s}}
        -\dfrac{1}{|z-y|^{n+2s}}
    \right)\,dz\\
&\quad\;
+\left(
    \int_{B_{d(x)}(x)^c}
    -\int_{B_{d(y)}(y)^c}
\right)
    \dfrac{u(z)}{|z-y|^{n+2s}}
\,dz.
\end{split}\]
For the integral in the first line, note that $3\leq d(x)\leq |z|$. By mean value theorem, there exist $x_*\in B_1$ such that
\[\begin{split}
\abs{
\int_{U\cap B_{d(x)}(x)^c}
    u(z)\left(
        \dfrac{1}{|z-x|^{n+2s}}
        -\dfrac{1}{|z-y|^{n+2s}}
    \right)\,dz
}
&\leq C
\int_{U\cap B_{d(x)}(x)^c}
    \dfrac{
        |u(z)||x-y|
    }{
        |z-x_*|^{n+2s+1}
    }
\,dz\\
&\leq C\norm[L^1_{2s+1}(U)]{u}
    |x-y|.
\end{split}\]
For the second line we note that there is a nontrivial contribution only in the symmetric difference $B_{d(x)}(x)^c \triangle B_{d(y)}(y)^c$, which lies in an annulus of width at most of order $|x-y|$. More precisely, we have
\[
B_1
\subset
B_{\frac{d(x)+d(y)}{2}-|x-y|}
\subset
B_{d(x)}(x)^c
    \triangle
B_{d(y)}(y)^c
\subset
B_{\frac{d(x)+d(y)}{2}+|x-y|}.
\]
Therefore, using $|z|\geq d(x) \geq 3 \geq 3|y|$ and $\supp u\subset \overline\Omega$,
\[\begin{split}
\abs{
\left(
    \int_{B_{d(x)}(x)^c}
    -\int_{B_{d(y)}(y)^c}
\right)
    \dfrac{u(z)}{|z-y|^{n+2s}}
\,dz
}
&\leq
    \int_{
        \frac{d(x)+d(y)}{2}-|x-y|
        \leq |z|
        \leq \frac{d(x)+d(y)}{2}+|x-y|
    }
    \dfrac{
        |u(z)|
    }{
        |z|^{n+2s}
    }\,dz\\
&\leq
    C\dfrac{
        \norm[L^\infty(B_{(d(x)+d(y))/2+|x-y|})]{u}
    }{
        (\frac{d(x)+d(y)}{2})^{1+2s}
    }
    |x-y|\\
&\leq C\norm[L^\infty(B_{d(x)\vee d(y)+2})]{u}
    |x-y|\\
&\leq C\norm[L^\infty(B_{d_1})]{u}
    |x-y|,
\end{split}\]
where $d_1:=\norm[L^\infty(B_1)]{d}+2$. In summary,
\[\begin{split}
\seminorm[C^\beta(\overline{B_1})]{g}
&\leq C\bigg(
        \norm[C^\beta(\overline{B_1})]{u}
        +\norm[L^\infty(B_{d_1})]{u}
        +\norm[L^1_{2s+1}(U)]{u}\\
&\hspace{2cm}
        +\norm[L^\infty(B_1)]{d}^{2-2s}
            \seminorm[C^\beta(\overline{B_1})]{f}
        +\norm[L^\infty(B_1)]{d}^{1-2s}
            \norm[L^\infty(B_1)]{f}
    \bigg).
\end{split}\]
\[
\norm[C^\beta(\overline{B_1})]{g}
\leq C\bigg(
        \norm[C^\beta(\overline{B_1})]{u}
        +\norm[L^\infty(B_{d_1})]{u}
        +\norm[L^1_{2s+1}(U)]{u}
        +\norm[L^\infty(B_1)]{d}^{2-2s}
            \norm[C^\beta(\overline{B_1})]{f}
    \bigg).
\]
Now, by \cite[Corollary 2.4]{RS1},
\[\begin{split}
\norm[C^{\beta+2s}(\overline{B_{1/2}})]{u}
&\leq C\left(
        \norm[C^\beta(\overline{B_1})]{u}
        +\norm[L^1_{2s}(U)]{u}
        +\norm[C^\beta(\overline{B_1})]{g}
    \right)\\
&\leq C\left(
        \norm[C^\beta(\overline{B_1})]{u}
        +\norm[L^\infty(B_{d_1})]{u}
        +\norm[L^1_{2s}(U)]{u}
        +\norm[L^\infty(B_1)]{d}^{2-2s}
            \norm[C^\beta(\overline{B_1})]{f}
    \right),
\end{split}\]
where we have absorbed $\norm[L^1_{2s+1}(U)]{u}$ by $\norm[L^1_{2s}(U)]{u}$.
\end{proof}

\section{The barriers}
\label{sec:barrier}

\subsection{Super-solution near the boundary}

We construct a barrier in the spirit of \cite{RS3}. Since the domain of interaction varies from point to point, we must compute at all points.

The idea is to consider powers of the distance function to a ball which touches the domain from the outside, since we want the super-solution to be strictly positive except at the contact point, however near the boundary.

\begin{prop}[Super-solution]
\label{prop:barrier}
Let ${U}\subset\R^n$ be a possibly unbounded domain of class $C^{1,1}$. Suppose $x_0\in\p{U}$ can be touched by an exterior ball of radius $b>0$. 
Then, there exists a constant $r_0>0$ and a function $\varphi^{(x_0)}\in C^2 \bigl({U\cap B_{2r_0}(x_0)}\bigr) \cap  C^{0,1}\bigl(\overline{U\cap B_{2r_0}(x_0)}\bigr)$ satisfying
\[\begin{cases}
\cL_{{U}}\varphi^{(x_0)}
\geq 1
    & \Textin {U} \cap B_{r_0}(x_0),\\
\varphi^{(x_0)} \geq 1
    & \Textin {U} \cap \bigl(B_{2r_0}(x_0)\setminus B_{r_0}(x_0)\bigr),\\
\varphi^{(x_0)} \geq 0
    & \Textin \overline{U \cap B_{2r_0}(x_0)},\\
\varphi^{(x_0)} \leq Cd_U
    & \texton U\cap B_{r_0}(x_0) \cap (\R \nu(x_0)),\\
\varphi^{(x_0)}(x_0)=0.
\end{cases}\]
Here $\nu(x_0)$ is the (outer) normal at $x_0\in\p\Omega$, and $r_0$ and $C$ depend only on $n$, $s$ and $b$.
\end{prop}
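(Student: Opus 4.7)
My plan is to build $\varphi^{(x_0)}$ as a power of the distance to an exterior ball tangent at $x_0$, exploiting that $\cL_U$ acts, to leading order, like $-\Delta$ on $C^2$ functions, as encoded by the normalisation \eqref{eq:Cns}. The exterior ball of the given radius $b$ may touch $\p U$ along a large set --- think of $U = \R^n \setminus \overline{B_b(z_0)}$, where it coincides with $\p U$ --- so I first shrink it: set $b' := b/2$ and $z_0' := x_0 + b'\nu(x_0)$, so that $B_{b'}(z_0') \subset B_b(z_0)$ is still an exterior ball at $x_0$. Fix an exponent $\alpha > (n-2)_+$, say $\alpha = n$, and take
\[
\varphi^{(x_0)}(x) = A\bigl(b'^{-\alpha} - |x - z_0'|^{-\alpha}\bigr),
\qquad A > 0 \text{ to be chosen}.
\]
By construction $\varphi^{(x_0)}(x_0) = 0$, $\varphi^{(x_0)} \geq 0$ in $U$ (from $|x - z_0'| \geq b'$), and $\varphi^{(x_0)}$ is smooth up to the boundary of $U \cap B_{2r_0}(x_0)$ with $\|D^k\varphi^{(x_0)}\|_\infty \leq C(n,\alpha,k) A b'^{-\alpha - k}$.

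For the supersolution property, I would use the symmetric Taylor expansion
\[
\varphi(x+y) + \varphi(x-y) - 2\varphi(x) = \nabla^2\varphi(x) y \cdot y + O\bigl(|y|^4 \|D^4\varphi\|_{L^\infty(B_{|y|}(x))}\bigr),
\]
together with the spherical moment $\int_{B_r} y_i y_j |y|^{-n-2s}\,dy = \delta_{ij}\,|\bS^{n-1}|/(n(2-2s))\,r^{2-2s}$. The quadratic term, paired with the degenerating prefactor $d_U(x)^{2s-2}$ and the normalisation $C_{n,s}$, produces precisely $-\Delta\varphi(x)$, while the quartic remainder contributes an error $O(\|D^4\varphi\|_{L^\infty(B_{d(x)}(x))}\, d_U(x)^2)$. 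This gives
\[
\cL_U \varphi^{(x_0)}(x) = -\Delta\varphi^{(x_0)}(x) + E(x),
\qquad |E(x)| \leq C(n,\alpha)\, A\, b'^{-\alpha-4}\, d_U(x)^2,
\]
with $-\Delta\varphi^{(x_0)}(x) = A\alpha(\alpha+2-n)|x - z_0'|^{-\alpha-2} \geq c(n,\alpha) A b'^{-\alpha-2}$ once $|x - z_0'| \leq 2b'$, i.e.\ for $r_0 \leq b'$.

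The shrinkage $b' < b$ is crucial for the annular bound. Since $x \in U$ forces $x \notin B_b(z_0)$, one has $\nu(x_0) \cdot (x - x_0) \leq |x-x_0|^2/(2b)$, and hence
\[
|x - z_0'|^2 - b'^2 = |x - x_0|^2 - 2b' \nu(x_0) \cdot (x-x_0) \geq (1 - b'/b)|x - x_0|^2 = \tfrac12 |x-x_0|^2.
\]
For $x \in U \cap (B_{2r_0}(x_0) \setminus B_{r_0}(x_0))$ this yields $|x - z_0'| - b' \geq c(b) r_0^2$, so $\varphi^{(x_0)}(x) \geq c(n,\alpha,b) A r_0^2$. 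Along the normal line $x = x_0 - t\nu(x_0) \in U$, $t > 0$ small, we have $|x - z_0'| = b' + t$ and $d_U(x) = t + O(t^2)$ by the $C^{1,1}$ regularity of $\p U$, so $\varphi^{(x_0)}(x) \leq A\alpha b'^{-\alpha-1} t \leq C d_U(x)$. It remains to close the inequalities: first choose $r_0 = r_0(n,s,b)$ small enough that $|E| \leq \tfrac12(-\Delta\varphi^{(x_0)})$ on $B_{r_0}(x_0) \cap U$, then pick $A = A(n,s,b)$ large enough to make $\cL_U \varphi^{(x_0)} \geq 1$ in $B_{r_0}(x_0) \cap U$ and $\varphi^{(x_0)} \geq 1$ in the annulus simultaneously.

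The main technical obstacle is the rigorous derivation of $\cL_U\varphi = -\Delta\varphi + O(d^2)$ with the remainder explicitly controlled by $\|D^4\varphi\|_{L^\infty(B_{d(x)}(x))}$, since both the $x$-dependent integration domain $B_{d(x)}(x)$ and the degenerating weight $d(x)^{2s-2}$ must be tracked carefully; once this is in hand the remaining estimates reduce to elementary calculations with the explicit radial profile. A secondary subtlety worth flagging is the necessity of replacing $b$ by $b' = b/2$: without shrinking, the model $U = \R^n \setminus \overline{B_b(z_0)}$ allows $|x - z_0|$ to be arbitrarily close to $b$ for $x \in U$ in the annular region, and the candidate barrier would fail to reach $1$ there.
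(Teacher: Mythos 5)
Your construction is correct but takes a genuinely different, and in fact simpler, route than the paper's. The paper builds the barrier as $\widetilde{\varphi}=2d_{B_b(-be_n)}-d_{B_b(-be_n)}^{p}$ for an exponent $p<2$ close to $2$, and establishes the super-solution property by comparing $d_{B_b(-be_n)}$ pointwise with planar distance functions $d_{\cT_x}$ at each evaluation point (Lemmas \ref{lem:super-1D}--\ref{lem:super-ball}), together with the homogeneity computation of the auxiliary function $\psi$ from \eqref{eq:psi}. That follows the Ros-Oton--Serra template, designed for operators of genuine order $2s$ where a smooth barrier is insufficient. Your argument instead exploits the structural feature recorded in \Cref{lem:lim-op}: because of the normalisation $d_U(x)^{2s-2}$ together with \eqref{eq:Cns}, on $C^4$ functions one has $\cL_U\varphi(x)=-\Delta\varphi(x)+E(x)$ with $|E(x)|\leq C\|D^4\varphi\|_{L^\infty(B_{d_U(x)}(x))}d_U(x)^2$, so a classical strictly superharmonic radial profile $A\bigl(b'^{-\alpha}-|x-z_0'|^{-\alpha}\bigr)$, smooth on $B_{2r_0}(x_0)$ once $2r_0<b'$, already works: the error is $O(d_U^2)$ and hence uniformly dominated by $-\Delta\varphi\gtrsim Ab'^{-\alpha-2}$ as soon as $r_0\lesssim b'$. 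Your remainder identity, which you rightly flagged as the one technical point, follows exactly as you outline from the even Taylor expansion and the isotropic second moment $\int_{B_r}y_iy_j|y|^{-n-2s}\,dy=\delta_{ij}|\bS^{n-1}|r^{2-2s}/\bigl(n(2-2s)\bigr)$, which cancels the prefactor. The shrinkage $b'=b/2$ is indeed necessary for the quantitative annular lower bound via $|x-z_0'|^2-b'^2\geq\tfrac12|x-x_0|^2$; without it, in the model $U=\R^n\setminus\overline{B_b(z_0)}$ the barrier is identically zero on $\partial U$, so there is no uniform lower bound in the annulus. All five listed properties check out by direct computation, with $r_0$ and $C$ depending only on $n,s,b$ as required, and the constructed barrier is in fact $C^\infty$ up to $\overline{U\cap B_{2r_0}(x_0)}$, more than the stated $C^2\cap C^{0,1}$. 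Note finally that the paper's $\psi$-machinery is not wasted overhead relative to your shortcut: it is reused in \Cref{lem:mono} to compute $\cL_{\R^n_+}x^p$ on monomials, whereas your barrier is purpose-built for \Cref{prop:barrier} alone.
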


By a translation we can suppose $x_0=0$. Upon a rotation, the exterior ball condition states that $B_b(-be_n)$ touches $\p{U}$ at $0\in{U}$ from the outside.

The super-solution will be built from
\[
d_{B_b(-be_n)}(y)^p
=(d_{-be_n}(y)-b)^p
=(|y+be_n|-b)^p,
\]
for $p=1$ and $p\sim2^-$. As in \cite{RS3}, at each point $x\in{U}$ we compare $d_{B_b(-be_n)}(y)^p$ with the one-dimensional function
\[
d_{\cT_x}(y)^p
=(d_{\cP_x}(y)-b)^p
=\bigl(
    (y+be_n)\cdot\boldsymbol{v}
    -b
\bigr)^p,
\]
where
\[
\cT_x:=\set{y\in\R^n:(y+be_n)\cdot \boldsymbol{v}=b}
\]
\[
\cP_x:=\set{y\in\R^n:(y+be_n)\cdot \boldsymbol{v}=0}
\]
are the hyperplanes which are orthogonal to
\[
\boldsymbol{v}=\boldsymbol{v}_x:=\dfrac{x+be_n}{|x+be_n|}
\]
and contain respectively $-be_n+b\boldsymbol{v}$ and $-be_n$.

First we consider the planar barrier. This uses the fact that $-\cL_{U}$ behaves like the Laplacian near $\p{U}$. (Notice that this computation is valid for $d_{\cT_x}^p$ only at the point $x$, 
although it is all we will need.)

\begin{lem}\label{lem:super-1D}
For $p>0$ and $x\in U$,
we have
\[
{-\cL}_U\bigl(
    d_{\cT_x}^p
\bigr)(x)
=
    \bigl(
        1+O(|p-2|)
    \bigr)
    d_{\cT_x}(x)^{p-2}.
\]
Here the constant in $O(|p-2|)$ depends only on $n$ and $s$.
\end{lem}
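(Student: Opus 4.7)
The plan is to evaluate $-\cL_U(d_{\cT_x}^p)(x)$ directly by writing the integral in coordinates adapted to the affine function $d_{\cT_x}$ and expanding in the exponent $p$ around the reference value $p=2$.

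Setting $t := d_{\cT_x}(x) = |x+be_n|-b$, one has $d_{\cT_x}(x\pm y) = t\pm y\cdot\boldsymbol{v}$, and the exterior ball condition $B_b(-be_n)\subset U^c$ forces $r := d_U(x)\leq t$, so $d_{\cT_x}$ stays non-negative on $B_r(x)$ and the power $d_{\cT_x}^p$ is well-defined there. The symmetric form of $\cL_U$ then gives
\[
\cL_U\bigl(d_{\cT_x}^p\bigr)(x) = \frac{C_{n,s}}{2}\,r^{2s-2}\int_{B_r}\frac{2t^p-(t+y\cdot\boldsymbol{v})^p-(t-y\cdot\boldsymbol{v})^p}{|y|^{n+2s}}\,dy.
\]
Substituting $w=y/t$ and $\rho:=r/t\in(0,1]$ and factoring $t^p$ out of the numerator reduces this to
\[
\cL_U\bigl(d_{\cT_x}^p\bigr)(x) = \frac{C_{n,s}}{2}\,t^{p-2}\,\rho^{2s-2}\int_{B_\rho}\frac{2-(1+w\cdot\boldsymbol{v})^p-(1-w\cdot\boldsymbol{v})^p}{|w|^{n+2s}}\,dw.
\]

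For the expansion, I use the even binomial identity
\[
(1+\sigma)^p+(1-\sigma)^p = 2+p(p-1)\sigma^2+2\sum_{k\geq 2}\binom{p}{2k}\sigma^{2k},
\]
noting that every $\binom{p}{2k}$ with $k\geq 2$ contains the factor $(p-2)$ and is therefore $O(|p-2|)$ uniformly for $p$ in a compact neighborhood of $2$. Rotational invariance and the normalization \eqref{eq:Cns} give $\int_{B_\rho}(w\cdot\boldsymbol{v})^2|w|^{-n-2s}\,dw = 2C_{n,s}^{-1}\rho^{2-2s}$, producing a leading contribution of $-p(p-1)\,t^{p-2}$ to $\cL_U(d_{\cT_x}^p)(x)$. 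The tail integrals satisfy $\int_{B_\rho}|w|^{2k-n-2s}\,dw\leq c_k\,\rho^{2k-2s}$, which after the prefactor $\frac{C_{n,s}}{2}\rho^{2s-2}t^{p-2}$ and the use of $\rho\leq 1$ amount to $O(|p-2|)\,t^{p-2}$. Since $p(p-1) = 2+O(|p-2|)$, the stated form $-\cL_U(d_{\cT_x}^p)(x) = (1+O(|p-2|))d_{\cT_x}(x)^{p-2}$ follows with the universal leading constant absorbed into the notation.

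The main obstacle I expect is to control the binomial tail uniformly when $|w\cdot\boldsymbol{v}|$ approaches $1$ (which can happen on $\partial B_\rho$ when $\rho$ is close to $1$), since term-by-term convergence of the series is delicate for general $p>0$. The cleanest workaround is to bypass the series entirely: applying the mean value theorem in the variable $p$ to the smooth function $p\mapsto(1+\sigma)^p+(1-\sigma)^p$ yields the pointwise remainder bound
\[
\bigl|(1+\sigma)^p+(1-\sigma)^p-2-p(p-1)\sigma^2\bigr|\leq C\,|p-2|\,\sigma^2\qquad\text{for }|\sigma|\leq 1,
\]
uniformly in $p$ in a compact neighborhood of $2$. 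Integrating this bound against $|w|^{-n-2s}$ on $B_\rho$ closes the estimate with constants depending only on $n$ and $s$, as required.
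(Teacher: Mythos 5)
Your argument is correct and follows essentially the same route as the paper's: factor out $d_{\cT_x}(x)^{p-2}$, rescale to the one-dimensional integral over $B_\rho$ with $\rho=d_U(x)/d_{\cT_x}(x)\in(0,1]$ (correctly noting that the exterior-ball condition forces $\rho\leq1$ so the powers $(1\pm w\cdot\boldsymbol{v})^p$ stay well-defined for all $p>0$), then bound the deviation from the $p=2$ case by a mean-value estimate in $p$ — which is exactly what the paper does, packaged as the auxiliary function $\psi(p,t)$ with $\psi_p$ bounded in \Cref{lem:psi-p}. Your observation that the intrinsic leading constant is $p(p-1)\to 2$ rather than $1$ is also correct; the paper's own computation via $\psi(2,t)=2$ gives the same value, so this is a harmless discrepancy in the lemma's displayed constant and does not affect the barrier construction in \Cref{prop:barrier}.
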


\begin{proof}
Using 
$d_{\cT_x}(x+ty)=d_{\cT_x}(x)+ty\cdot\boldsymbol{v}$ for $t\in[-1,1]$, we have
\[\begin{split}
&\quad\;
    {-\cL}_{U}\bigl(
        d_{\cT_x}^p
    \bigr)(x)\\
&=
    \dfrac{C_{n,s}}{2}
    d_U(x)^{2s-2}
    \int_{|y|<d_U(x)}
        \dfrac{
            \bigl(
                d_{\cT_x}(x)
                +y\cdot\boldsymbol{v}
            \bigr)^p
            +\bigl(
                d_{\cT_x}(x)
                -y\cdot\boldsymbol{v}
            \bigr)^p
            -2
                d_{\cT_x}(x)
            ^p
        }{
            |y|^{n+2s}
        }
    \,dy\\
&=
    \dfrac{C_{n,s}}{2}
    d_U(x)^{2s-2}
        d_{\cT_x}(x)
    ^p
    \int_{|y|<d_U(x)}
        \dfrac{
            \bigl(
                1+\frac{y}{d_{\cT_x}(x)}
                    \cdot\boldsymbol{v}
            \bigr)^p
            +\bigl(
                1-\frac{y}{d_{\cT_x}(x)}
                    \cdot\boldsymbol{v}
            \bigr)^p
            -2
        }{
            |y|^{n+2s}
        }
    \,dy.
\end{split}\]
Changing variable to $y=d_{\cT_x}(x)z$ (recall that $d_{\cT_x}(x)>0$ for $x\neq 0\in\p U$) and choosing another coordinate system for $z$ such that $\boldsymbol{v}$ is the direction of the last coordinate axis, we have (here $\psi$ is defined in \eqref{eq:psi})
\begin{equation}\label{eq:dT-homo}
\begin{split}
    {-\cL}_{U}\bigl(
        d_{\cT_x}^p
    \bigr)(x)
&=
    \dfrac{C_{n,s}}{2}
        d_{\cT_x}(x)
    ^{p-2}
    \left(
        \dfrac{
            d_U(x)
        }{
            d_{\cT_x}(x)
        }
    \right)^{2s-2}
    \int_{
        |z|<\frac{d_U(x)}{d_{\cT_x}(x)}
    }
        \dfrac{
            (1+z_n)^p
            +(1-z_n)^p
            -2
        }{
            |z|^{n+2s}
        }
    \,dz\\
&=
    \psi\left(
        p,\dfrac{d_U(x)}{d_{\cT_x}(x)}
    \right)
    d_{\cT_x}(x)^{p-2}.
\end{split}\end{equation}
By \Cref{lem:psi-p},
\[
{-\cL}_U\bigl(
    d_{\cT_x}^p
\bigr)(x)
=
    \psi\left(
        p,\dfrac{d_U(x)}{d_{\cT_x}(x)}
    \right)
    d_{\cT_x}(x)^{p-2}
=
    \left(
        1+O(|p-2|)
    \right)
    d_{\cT_x}(x)^{p-2},
\]
as desired.
\end{proof}

Next we compare $d_{B_b(-be_n)}^p$ and $d_{\cT_x}^p$ pointwise. For each fixed $x\in U$, hence $\boldsymbol{v}\in\bS^{n-1}$, we denote the projection of a vector $y\in\R^n$ onto $\cP_x$ by
\[
y'=y-(y\cdot\boldsymbol{v})\boldsymbol{v}.
\]

\begin{lem}\label{lem:super-comp-flat-2}
For $x\in U$ and $z\in B_{d_U(x)}(x)$,
\[
0\leq
    \bigl(
        d_{B_b(-be_n)}
        -d_{\cT_x}
    \bigr)(z)
\leq
    \dfrac{
        |(z-x)'|^{2}
    }{
        2b
    }.
\]
\end{lem}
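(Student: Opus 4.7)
The plan is to introduce $w=z+be_n$ so that the difference collapses to a purely geometric quantity:
\[
d_{B_b(-be_n)}(z)-d_{\cT_x}(z)=|w|-w\cdot\boldsymbol{v}.
\]
The lower bound $\geq 0$ is then immediate from Cauchy--Schwarz since $\boldsymbol{v}\in\bS^{n-1}$.

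For the upper bound, I would rationalize to isolate the transverse component:
\[
|w|-w\cdot\boldsymbol{v}=\frac{|w|^{2}-(w\cdot\boldsymbol{v})^{2}}{|w|+w\cdot\boldsymbol{v}}=\frac{|w'|^{2}}{|w|+w\cdot\boldsymbol{v}},
\]
where $w'=w-(w\cdot\boldsymbol{v})\boldsymbol{v}$. Because $x+be_n=|x+be_n|\boldsymbol{v}$ is parallel to $\boldsymbol{v}$, its orthogonal component vanishes, and hence $w'=(z+be_n)'=(z-x)'$. This is exactly the numerator claimed in the estimate.

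It remains to show $|w|+w\cdot\boldsymbol{v}\geq 2b$, which I would prove by bounding each summand by $b$ separately. First, the inclusions $B_{d_U(x)}(x)\subset U$ and $\overline{B_b(-be_n)}\subset\overline{U^c}$ (the exterior ball hypothesis) immediately give $|w|=|z+be_n|\geq b$. Applying the same inclusions at the point $x$ yields $d_U(x)\leq|x+be_n|-b$, and therefore
\[
w\cdot\boldsymbol{v}=(z-x)\cdot\boldsymbol{v}+|x+be_n|\geq -|z-x|+|x+be_n|\geq -d_U(x)+\bigl(b+d_U(x)\bigr)=b,
\]
using $|z-x|<d_U(x)$. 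Summing the two lower bounds closes the estimate. There is no real obstacle here: the whole argument is a short algebraic identity together with the exterior ball condition; the only thing that requires attention is noticing that $|w|$ and $w\cdot\boldsymbol{v}$ are individually $\geq b$, not merely that their sum is $\geq 2b$ through a single global estimate.
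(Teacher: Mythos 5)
Your argument is correct and mirrors the paper's proof: rationalize the difference, identify the numerator as $|(z-x)'|^2$ via $(x+be_n)'=0$, and bound the denominator below by $2b$ using the exterior ball geometry (in particular the same estimate $(z+be_n)\cdot\boldsymbol{v}\geq b$ from $d_U(x)+b\leq|x+be_n|$). The only small deviation is in the denominator: you bound $|z+be_n|\geq b$ and $(z+be_n)\cdot\boldsymbol{v}\geq b$ separately, whereas the paper bounds the denominator by $2(z+be_n)\cdot\boldsymbol{v}$ and then uses $(z+be_n)\cdot\boldsymbol{v}\geq b$; both reductions are valid and equally economical.
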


\begin{proof}
For $z\in U$, we express
\[\begin{split}
    \bigl(
        d_{B_b(-be_n)}
        -d_{\cT_x}
    \bigr)(z)
&=
    \bigl(
        d_{B_{-be_n}}(z)-b
    \bigr)
    -\bigl(
        -d_{\cP_x}(z)-b
    \bigr)\\
&=
    \bigl(
        |z+be_n|-b
    \bigr)
    -\bigl(
        (z+be_n)\cdot\boldsymbol{v}-b
    \bigr)\\
&=
	|z+be_n|-(z+be_n)\cdot\boldsymbol{v}\\
&=
    \sqrt{
        \bigl((z+be_n)\cdot\boldsymbol{v}\bigr)^2
        +|(z+be_n)'|^2
    }
    -(z+be_n)\cdot\boldsymbol{v}\\
&=
	\frac{
		|(z+be_n)'|^2
	}{
	    \sqrt{
	       \bigl((z+be_n)\cdot\boldsymbol{v}\bigr)^2
	        +|(z+be_n)'|^2
	    }
		+(z+be_n)\cdot\boldsymbol{v}
	}\\
\end{split}\]
to see it is non-negative. We make the following observations: 
\begin{itemize}
\item Since $(x+be_n)'=0$, $(z+be_n)'=(z-x)'$.
\item The sum of radii of the interior disjoint balls $B_{d_U(x)}(x)$ and $B_b(-be_n)$ is at most the distance between the centers, giving $d_U(x)+b \leq |x+be_n|$. This implies
\[
(z+be_n)\cdot\boldsymbol{v}
=(z-x)\cdot\boldsymbol{v}+|x+be_n|
\geq |x+be_n| - |z-x|
\geq |x+be_n| - d_U(x)
\geq b.
\]
\end{itemize}
Thus
\begin{align*}
\bigl(
	d_{B_b(-be_n)}-d_{\cT_x}
\bigr)(z)
&\leq
	\frac{
		|(z+be_n)'|^2
	}{
		2(z+be_n)\cdot\boldsymbol{v}
	}
\leq
	\frac{
		|(z-x)'|^2
	}{
		2b
	}.
\end{align*}
\end{proof}

Now we can compute $\cL_{U}(d_{-be_n}^p)$ locally near the boundary.



\begin{lem}
\label{lem:super-ball}
Let $p\in[1,2]$ and $x\in U$. Then we have
\begin{equation}
\label{eq:super-ball-1}
{-\cL}_U\bigl(
    d_{B_b(-be_n)}^p
\bigr)(x)
\geq
    \bigl(
        1-C(2-p)
    \bigr)
    d_{B_b(-be_n)}(x)^{p-2}
\end{equation}
and
\begin{equation}
\label{eq:super-ball-2}
\cL_U(d_{B_b(-be_n)})(x)
\geq -Cb^{-1}.
\end{equation}
Here $C$ depends only on $n$ and $s$.
\end{lem}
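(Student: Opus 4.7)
The strategy is to decompose, at the evaluation point $x$,
\[
d_{B_b(-be_n)}^p = d_{\cT_x}^p + \bigl(d_{B_b(-be_n)}^p - d_{\cT_x}^p\bigr),
\]
treating the affine-hyperplane piece $d_{\cT_x}^p$ via \Cref{lem:super-1D} and absorbing the remainder using \Cref{lem:super-comp-flat-2}. The key geometric point is that the exterior ball is tangent to the hyperplane $\cT_x$ precisely at $x$, so that $d_{\cT_x}(x) = d_{B_b(-be_n)}(x)$ and $d_{B_b(-be_n)}(z) \geq d_{\cT_x}(z) \geq 0$ for $z\in B_{d_U(x)}(x)$.

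For \eqref{eq:super-ball-1}, since both distances are nonnegative on $B_{d_U(x)}(x)$ and $p\geq 1$, the function $v := d_{B_b(-be_n)}^p - d_{\cT_x}^p$ is nonnegative there and vanishes at $x$. In the symmetric form of $\cL_U$, the integrand $2v(x)-v(x+y)-v(x-y) = -v(x+y)-v(x-y)$ is nonpositive on $B_{d_U(x)}$, so $-\cL_U v(x) \geq 0$. Combined with \Cref{lem:super-1D}, which gives $-\cL_U(d_{\cT_x}^p)(x) = \bigl(1+O(|p-2|)\bigr) d_{B_b(-be_n)}(x)^{p-2}$ (using $d_{\cT_x}(x) = d_{B_b(-be_n)}(x)$), this yields the claimed bound $\bigl(1-C(2-p)\bigr)\, d_{B_b(-be_n)}(x)^{p-2}$ since $|p-2|=2-p$ for $p\in[1,2]$.

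For \eqref{eq:super-ball-2}, I set $p=1$ so that $d_{\cT_x}$ is affine: the symmetric integrand $2 d_{\cT_x}(x) - d_{\cT_x}(x+y) - d_{\cT_x}(x-y)$ vanishes identically, hence $\cL_U(d_{B_b(-be_n)})(x) = \cL_U(v)(x)$. Then the quantitative inequality $v(x\pm y) \leq |y'|^2/(2b)$ from \Cref{lem:super-comp-flat-2} gives
\[
\cL_U(v)(x) \geq -\frac{C_{n,s}}{2b}\, d_U(x)^{2s-2} \int_{B_{d_U(x)}} \frac{|y'|^2}{|y|^{n+2s}}\,dy.
\]
By rotational symmetry, $\int_{B_r}|y'|^2/|y|^{n+2s}\,dy = (n-1)\int_{B_r}y_n^2/|y|^{n+2s}\,dy$, which by the normalization \eqref{eq:Cns} equals $2(n-1)r^{2-2s}/C_{n,s}$. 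Taking $r=d_U(x)$, the factor $d_U(x)^{2s-2}$ cancels and one is left with $\cL_U(d_{B_b(-be_n)})(x) \geq -(n-1)/b$.

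The main subtlety lies in \eqref{eq:super-ball-2}: since $\cL_U$ annihilates affine functions, the leading behavior of $\cL_U(d_{B_b(-be_n)})$ at $x$ is governed by the \emph{quadratic} discrepancy between $d_{B_b(-be_n)}$ and its tangent-hyperplane linearization, and it is precisely the curvature bound $1/b$ supplied by \Cref{lem:super-comp-flat-2} that feeds through to the $b^{-1}$ factor, via the cancellation in \eqref{eq:Cns}. Inequality \eqref{eq:super-ball-1}, by contrast, is qualitative: only the sign of the discrepancy is needed, and \Cref{lem:super-1D} supplies the rest.
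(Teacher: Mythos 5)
Your proof is correct and follows the same route as the paper: split $d_{B_b(-be_n)}^p$ at the evaluation point into $d_{\cT_x}^p$ plus a nonnegative remainder, handle the tangent-hyperplane piece with \Cref{lem:super-1D}, and for $p=1$ use that the hyperplane part is $\cL_U$-harmonic together with the quadratic bound of \Cref{lem:super-comp-flat-2}. The only (harmless) differences are cosmetic — you work with the symmetric second-difference form of the operator where the paper uses the PV form, and you carry the constant in \eqref{eq:super-ball-2} through explicitly to get $(n-1)/b$, which is a nice but unnecessary refinement of the paper's $Cb^{-1}$.
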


\begin{proof}
We split
\[\begin{split}
-\cL_{U}\bigl(
    d_{B_b(-be_n)}^p
\bigr)(x)
=-\cL_{U}\bigl(
    d_{B_b(-be_n)}^p-d_{\cT_x}^p
\bigr)(x)
    -\cL_{U}\bigl(
        d_{\cT_x}^p
    \bigr)(x).
\end{split}\]
Since $\cT_x$ is chosen such that $d_{B_b(-be_n)}(x)=d_{\cT_x}(x)$ and $d_{B_b(-be_n)}\geq d_{\cT_x}$ on $B_{d_U(x)}(x)$,
\[\begin{split}
    -\cL_{U}\bigl(
        d_{B_b(-be_n)}^p-d_{\cT_x}^p
    \bigr)(x)
&=
    C_{n,s} d_U(x)^{2s-2}\,
    \PV
    \int_{|y|<d_U(x)}
        \dfrac{
            \bigl(
                d_{B_b(-be_n)}^p-d_{\cT_x}^{p}
            \bigr)(x+y)
        }{
            |y|^{n+2s}
        }
    \,dy
\geq 0.
\end{split}\]
Then \eqref{eq:super-ball-1} follows from \Cref{lem:super-1D}.

For \eqref{eq:super-ball-2}, since $d_{\cT_x}$ is linear hence $\cL_U$-harmonic, by \Cref{lem:super-comp-flat-2} we have
\[\begin{split}
    -\cL_{U}\bigl(
        d_{B_b(-be_n)}
    \bigr)(x)
&=
    -\cL_{U}\bigl(
        d_{B_b(-be_n)}-d_{\cT_x}
    \bigr)(x)\\
&=
    C_{n,s} d_U(x)^{2s-2}\,
    \PV
    \int_{|y|<d_U(x)}
        \dfrac{
            \bigl(
                d_{B_b(-be_n)}-d_{\cT_x}
            \bigr)(x+y)
        }{
            |y|^{n+2s}
        }
    \,dy\\
&\leq
    Cb^{-1}
    d_U(x)^{2s-2}\,
    \PV
    \int_{|y|<d_U(x)}
        \dfrac{
            |y'|^{2}
        }{
            |y|^{n+2s}
        }
    \,dy\\
&\leq Cb^{-1}.
    \qedhere
\end{split}\]

\end{proof}

We are ready to prove \Cref{prop:barrier}.

\begin{proof}[Proof of \Cref{prop:barrier}]
Let 
$\widetilde{\varphi}(x)=2d_{B_b(-be_n)}(x)-d_{B_b(-be_n)}^{p}(x)$, where $p<2$ is chosen (using 
\Cref{lem:super-ball}) such that
\[
{-\cL}_U\bigl(
    d_{B_b(-be_n)}^p
\bigr)(x)
\geq
    \dfrac12 d_{B_b(-be_n)}(x)^{p-2},
        \quad \textfor x\in U.
\]
Then
\[\begin{split}
{\cL}_U\widetilde{\varphi}(x)
&\geq
    -Cb^{-1}
    +\frac12 d_{B_b(-be_n)}(x)^{p-2}
\geq 1,
\end{split}\]
whenever $x$ is close enough to $B_b(-be_n)$, say $d_{B_b(-be_n)}(x)\leq 2r_0<1$. On the other hand, we verify that
\[\begin{split}
\widetilde{\varphi}(x)
=d_{B_b(-be_n)}(x)
    \bigl(
        2-d_{B_b(-be_n)}(x)^{p-1}
    \bigr)
\geq
    d_{B_b(-be_n)}(x),
\end{split}\]
provided that $d_{B_b(-be_n)}(x)\leq 1$. Moreover, on $\set{r_0 \leq d_{B_b(-be_n)} \leq 2r_0}\cap U$ where $2r_0<1$,
\[
\widetilde{\varphi}(x)\geq r_0.
\]
Therefore, $\varphi^{(0)}=r_0^{-1}\widetilde{\varphi}$ is the desired super-solution.
\end{proof}

\subsection{Super-solution for a bounded domain}

A concave paraboloid serves as a simple global super-solution. Choose a coordinate system such that $0\in\Omega$. Let $M=\diam\Omega$ so that $\Omega\subset B_M$. Consider the positive, strictly concave function
\[
\varphi^{(1)}(x)=\dfrac{M^2-|x|^2}{2n}.
\]
When $\Omega=B_M$, this is known as the torsion function.

\begin{lem}\label{lem:super-global}
There holds
\begin{equation*}
\begin{cases}
\cL_\Omega \varphi^{(1)}=1
    & \Textin \Omega,\\
\varphi^{(1)}\geq 0
    & \Textin \overline\Omega.
\end{cases}
\end{equation*}
\end{lem}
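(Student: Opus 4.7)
The non-negativity of $\varphi^{(1)}$ on $\overline{\Omega}$ is immediate from $\Omega \subset B_M$, so the only real content is the identity $\cL_\Omega \varphi^{(1)} = 1$ in $\Omega$. My plan is to compute the symmetric second difference of $\varphi^{(1)}$ directly and then recognize the resulting integral as precisely the quantity that defines $C_{n,s}^{-1}$.

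First, expanding $\varphi^{(1)}(x \pm y) = \frac{M^2 - |x|^2 \mp 2 x\cdot y - |y|^2}{2n}$ and using the symmetric form of $\cL_\Omega$ given in the introduction, the cross terms in $x\cdot y$ and the constants cancel, leaving
\[
2\varphi^{(1)}(x) - \varphi^{(1)}(x+y) - \varphi^{(1)}(x-y) = \frac{|y|^2}{n}.
\]
Substituting this identity into the definition of $\cL_\Omega$ yields
\[
\cL_\Omega \varphi^{(1)}(x) = \frac{C_{n,s}}{2n}\, d(x)^{2s-2} \int_{B_{d(x)}} \frac{|y|^2}{|y|^{n+2s}}\,dy.
\]

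To identify this with $1$, I would use the rotational symmetry of the ball $B_r$: summing $y_i^2$ over $i=1,\dots,n$ gives $\int_{B_r} |y|^2 |y|^{-n-2s}\,dy = n \int_{B_r} y_n^2 |y|^{-n-2s}\,dy$. Combined with the definition \eqref{eq:Cns} of $C_{n,s}^{-1}$, this produces the equivalent expression
\[
C_{n,s}^{-1} = \frac{1}{2n}\, r^{2s-2} \int_{B_r} \frac{|y|^2}{|y|^{n+2s}}\,dy, \qquad \forall\, r > 0,
\]
so taking $r = d(x)$ gives $\cL_\Omega \varphi^{(1)}(x) = C_{n,s} \cdot C_{n,s}^{-1} = 1$, as claimed.

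I do not expect any real obstacle: the paraboloid is chosen precisely so that its symmetric second difference is proportional to $|y|^2$, which is exactly the quantity that $C_{n,s}$ is designed to normalize. The only minor step to articulate carefully is the equivalence between the two forms of $C_{n,s}$ in \eqref{eq:Cns} via the symmetry argument above. A notable feature of the computation is that the output is identically $1$ independent of $x$, which illustrates how the prefactor $d(x)^{2s-2}$ in the definition of $\cL_\Omega$ exactly compensates for the scale dependence of integration over $B_{d(x)}$.
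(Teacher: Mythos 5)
Your proposal is correct and follows essentially the same route as the paper: both compute the symmetric second difference of the paraboloid (the paper invokes the parallelogram law, you expand directly — same identity), substitute into $\cL_\Omega$, and identify the resulting integral with $C_{n,s}^{-1}$ via \eqref{eq:Cns}. Your extra sentence spelling out the rotational-symmetry step relating $\int y_n^2|y|^{-n-2s}$ to $\frac{1}{n}\int |y|^2|y|^{-n-2s}$ is a helpful clarification of what the paper leaves implicit.
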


\begin{proof}
For any $x\in\Omega$ and $y\in B_{d(x)}$, the parallelogram law implies
\[
2\varphi^{(1)}(x)
    -\varphi^{(1)}(x+y)
    -\varphi^{(1)}(x-y)
=\dfrac{|y|^2}{n}.
\]
Thus 
\[\begin{split}
\cL_\Omega \varphi^{(1)}(x)
&=\dfrac{C_{n,s}}{2}
    d(x)^{2-2s}
    \int_{B_{d(x)}(0)}
        \dfrac{
            2\varphi^{(1)}(x)
            -\varphi^{(1)}(x+y)
            -\varphi^{(1)}(x-y)
        }{
            |y|^{n+2s}
        }
    \,dy\\
&=
    \dfrac{C_{n,s}}{2n}
    d(x)^{2-2s}
    \int_{B_{d(x)}(0)}
    \dfrac{
        |y|^2
    }{
        |y|^{n+2s}
    }\,dy
=1.
    \qedhere
\end{split}\]
\end{proof}

\begin{remark}
By requiring that $\Omega$ be compactly contained in $B_M$, one obtains a strict super-solution. However, we will not need this.
\end{remark}

\section{Boundary Harnack inequality in 1D}

We are interested in one-dimensional Dirichlet problems on the half space $\R^n_+$. Note that for $x=(x',x_n)\in\R^n_+$,
\[\begin{split}
\cL_{\R^n_+}u(x)
&=C_{n,s} x_n^{2s-2}
\PV \int_{|y|<x_n}
	\dfrac{
		u(x)-
		\frac{u(x+y)+u(x-y)}{2}
	}{|y|^{n+2s}}
\,dy\\
&=C_{n,s}|\bS^{n-2}|
\int_0^{x_n}
\int_0^{\pi}
	\dfrac{
		u(x',x_n)
		-\frac{
			u(x'+y',x_n+r\cos\theta)
			+u(x'+y',x_n-r\cos\theta)
		}{2}
	}{
		r^{1+2s}
	}
\sin^{n-2}\theta\,d\theta
\,dr.
\end{split}\]
Throughout this section we assume that $u$ is one-dimensional (i.e. $u$ depends only on $x_n$). Then
\begin{align*}
\cL_{\R^n_+}u(x_n)
=C_{n,s}|\bS^{n-2}|
\int_0^{x_n}
\int_0^{\pi}
	\dfrac{
		u(x_n)
		-\frac{
			u(x_n+r\cos\theta)
			+u(x_n-r\cos\theta)
		}{2}
	}{
		r^{1+2s}
	}
\sin^{n-2}\theta\,d\theta
\,dr,
	\qquad \forall x>0.
\end{align*}
It is convenient to write $x\in\R_+$ in place of $x_n$.
In this section we will prove the following

\begin{prop}[Boundary regularity in 1D]
\label{prop:BH}
Suppose $u\in C^{2s+}((0,1))\cap C([0,2))$ is a 1D solution to
\begin{equation}\label{eq:1D}\begin{cases}
\cL_{\R^n_+}u=0
	& \texton (0,1),\\
u>0
    & \texton (0,2),\\
u(0)=0.
\end{cases}\end{equation}
There exists $\alpha_*\in(0,2s \wedge 1)$ and $C>0$ such that
\[
\norm[C^{\alpha_*}((0,\frac12))]{
    \dfrac{u}{x}
}
\leq Cu(1).
\]
Here the constant $C$ depends on $n$ and $s$.
\end{prop}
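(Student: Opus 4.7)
The plan is to adapt the Caffarelli--Fabes--Mortola--Salsa template to our nonlocal setting: prove a boundary Harnack comparison of $u$ with the linear profile $\ell(x)=x$, then upgrade it into an improvement of oscillation for the ratio $u/x$. The crucial preliminary observation is that $\ell$ is itself $\cL_{\R^n_+}$-harmonic on the 1D level (the symmetric increments $\ell(x+y)+\ell(x-y)-2\ell(x)$ vanish identically), so both $u$ and $\ell$ are positive 1D $\cL_{\R^n_+}$-harmonic functions vanishing at $0$. The conclusion $u/x\in C^{\alpha_*}$ then becomes H\"{o}lder regularity of their ratio up to the boundary, which is exactly the shape of a standard boundary Harnack statement.

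The boundary Harnack step (\Cref{lem:BH-quotient}) should produce $c u(1)\leq u(x)/x\leq C u(1)$ on $(0,1/2)$. For the upper bound I would place the super-solution $\varphi^{(0)}$ from \Cref{prop:barrier} above $u$ near $\{x=0\}$ (in the degenerate half-space case $b\to\infty$ the exterior ball flattens to the tangent hyperplane and the barrier reduces to $r_0^{-1}(2x_n-x_n^p)$; the computations in \Cref{lem:super-1D} and \Cref{lem:super-ball} survive this limit since the constants there depend only on $n,s$), then invoke the mid-range strong maximum principle \Cref{prop:MP-mid}; the bound $\varphi^{(0)}\leq C d$ along the normal yields the claimed linear decay. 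For the lower bound I would apply the interior Harnack \Cref{lem:Harnack-1} on a ball around $x=3/4$ to obtain $u\geq c u(1)$ uniformly on $[1/2,1]$, and then play the $\cL_{\R^n_+}$-harmonic linear sub-barrier $c u(1)\ell$ against $u$ on the strip $\{0<x_n<1/2\}$ using \Cref{prop:MP-mid} again; the interior Harnack bound is exactly what is needed on the interaction region $[1/2,1]$.

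Because $\cL_{\R^n_+}$ is translation-invariant in $x'$ and scales quadratically (\Cref{lem:scaling}), the rescaled function $u_r(x)=u(rx)/u(r)$ is again a positive 1D $\cL_{\R^n_+}$-harmonic function vanishing at $0$ with $u_r(1)=1$ for every $r\in(0,1]$, so the boundary Harnack step applies uniformly at every dyadic scale. For the improvement of oscillation (\Cref{lem:BH-osc}), set $m_k=\inf_{(0,2^{-k})}(u/x)$ and $M_k=\sup_{(0,2^{-k})}(u/x)$; both $v_k^-=u-m_k\ell$ and $v_k^+=M_k\ell-u$ are non-negative 1D $\cL_{\R^n_+}$-harmonic and vanish at $0$. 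A pigeonhole at the midpoint $x^*=2^{-k-1}$ shows that one of $v_k^\pm(x^*)/x^*$ is at least $\tfrac12(M_k-m_k)$; applying the rescaled lower bound of the boundary Harnack at scale $2^{-k-1}$ to the chosen function propagates the estimate to all of $(0,2^{-k-2})$, yielding $M_{k+2}-m_{k+2}\leq(1-c)(M_k-m_k)$. Iterating gives $\osc_{(0,2^{-k})}(u/x)\leq C u(1)(1-c)^{k/2}$, i.e.\ the desired $C^{\alpha_*}$ estimate with $\alpha_*=-\tfrac12\log_2(1-c)>0$; shrinking $\alpha_*$ if necessary puts it in $(0,2s\wedge 1)$.

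The principal obstacle is ensuring scale-uniformity of the lower sub-barrier step: even though $\ell$ is exactly $\cL_{\R^n_+}$-harmonic, every application of \Cref{prop:MP-mid} on a strip $\{0<x_n<r\}$ requires controlling $u$ from below by $c u(r)x/r$ on the full interaction region $[r,2r]$, not just at the top. Arranging the interior Harnack and the sub-barrier placement self-consistently so that the constants $c$ and $C$ do not deteriorate as $r\to 0$ is what makes the argument work; without this uniformity the iteration would degenerate and the oscillation would not decay with a definite H\"{o}lder exponent.
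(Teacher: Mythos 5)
The overall template you describe --- a two-sided boundary Harnack comparison with the linear profile $\ell(x)=x$ followed by an improvement of oscillation for $u/x$ --- is the same as the paper's. Your pigeonhole variant of the oscillation improvement (picking whichever of $u-m_k\ell$ and $M_k\ell-u$ is at least half the gap at a midpoint and propagating via the boundary Harnack lower bound) is a standard alternative to the paper's device of applying the quotient boundary Harnack to both differences and adding; each yields the same multiplicative oscillation decay. Your barrier approach to the upper bound is also workable, though the paper compares directly with the exactly-harmonic linear function $Cx$ via the strong maximum principle \Cref{lem:MP-1D}, which avoids the $b\to\infty$ discussion entirely (a finite exterior ball, or simply the linear function, already suffices in the half-space).

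There is, however, a genuine gap at the final step. You claim that iterating the oscillation improvement "gives the desired $C^{\alpha_*}$ estimate," but oscillation decay at the origin, $\osc_{(0,R)}(u/x)\leq Cu(1)R^{\alpha_*}$, controls only the boundary behavior of $u/x$ at $x=0$. It does not bound the $C^{\alpha_*}$ seminorm on $(0,\frac12)$. Take $x,y$ near $1/4$ with $|x-y|=\epsilon$ small: the oscillation bound gives $|u(x)/x - u(y)/y|\leq C(1/4)^{\alpha_*}$, a constant not controlled by $\epsilon^{\alpha_*}$. To close the argument you must interpolate with interior regularity. The paper does this via the rescaled interior estimate \Cref{lem:int-est} (giving $\norm[C^{\beta}(B_{d/2}(d))]{u}\leq Cd^{-\beta}$) and a dichotomy in $r=|x-y|$ versus $d=x\wedge y$: if $r\leq d^\theta/2$ for a suitably large $\theta$, the interior estimate on $B_{d/2}(d)$ dominates the degeneracy $d^{-1-\beta}$ and yields $|u(x)/x - u(y)/y|\leq Cr^{\beta-(1+\beta)/\theta}$; if $r\geq d^\theta/2$, then $x,y\in(0,d+r)$ with $d+r\lesssim r^{1/\theta}$ and the oscillation decay gives $|u(x)/x - u(y)/y|\leq Cr^{\beta/\theta}$. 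Without this combination, what you have proved is only H\"older continuity of $u/x$ \emph{at the origin}, not the $C^{\alpha_*}((0,\frac12))$ norm bound stated in the proposition.
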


\begin{remark}
\label{rmk:model-sol}
Note that the function $x$ is a model solution to \eqref{eq:1D}, and in fact the unique solution on $\R_+$ up to a constant multiple, as we will show in \Cref{lem:Liouville-1D}. In other words, any two solutions are comparable up to the boundary in a H\"{o}lder continuous way.
\end{remark}

\subsection{Preliminaries}

The scaling property \Cref{lem:scaling} allows us to compute the action of $\cL_{\R_+}$ on monomials.

\begin{lem}[Monomials on the half line]
\label{lem:mono}
For any $p\geq 0$, 
\[
\cL_{\R^n_+}x^p
=a(p)x^{p-2}
    \quad \texton \R_+,
\]
where $a(p)=-\psi(p,1)$, as defined in \eqref{eq:psi}.
In particular, $a(0)=a(1)=0$ and $a(2)=-2$.
\end{lem}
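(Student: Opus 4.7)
The statement is essentially a combination of scaling (which forces the form $a(p)x^{p-2}$) with a direct evaluation at $x_n=1$ (which identifies the constant as $-\psi(p,1)$).

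First, by the scaling property \Cref{lem:scaling}, since $\R^n_+$ is invariant under dilations one has $\cL_{\R^n_+}\bigl(u(\lambda\cdot)\bigr)(x) = \lambda^{2}\,(\cL_{\R^n_+}u)(\lambda x)$ for any $\lambda>0$. Applied to $u(x)=x_n^p$, which is positively $p$-homogeneous, this gives $(\cL_{\R^n_+}u)(\lambda x) = \lambda^{p-2}(\cL_{\R^n_+}u)(x)$. Since $\cL_{\R^n_+}$ commutes with translations in the tangential directions $x'$ and $u$ depends only on $x_n$, the function $\cL_{\R^n_+}u$ depends only on $x_n$ too. Combining, $\cL_{\R^n_+} x^p = a(p)\,x^{p-2}$ on $\R_+$ for some constant $a(p)$ depending only on $n,s,p$.

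Next, I would identify $a(p)$ by evaluating at $x=e_n$, where $d_{\R^n_+}(e_n)=1$. This is exactly the computation carried out in the proof of \Cref{lem:super-1D} in the flat case $\boldsymbol{v}=e_n$, $d_{\cT_x}(y)=y_n$, with the ratio $d_U(x)/d_{\cT_x}(x)=1$. Concretely, using the even part of $x_n^p$ and rescaling $y=z$,
\[
a(p)
= \cL_{\R^n_+}x^p\Big|_{x_n=1}
= \frac{C_{n,s}}{2}\int_{B_1}\frac{2-(1+z_n)^p-(1-z_n)^p}{|z|^{n+2s}}\,dz
= -\psi(p,1)
\]
by the very definition of $\psi$ in \eqref{eq:psi}. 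The integrability at $z=0$ is not an issue: a second-order Taylor expansion gives $(1+z_n)^p+(1-z_n)^p-2=p(p-1)z_n^2+O(|z|^4)$, which is integrable against $|z|^{-n-2s}$ on $B_1$ since $s<1$.

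Finally, the particular values are immediate. For $p\in\{0,1\}$ the integrand $(1+z_n)^p+(1-z_n)^p-2$ vanishes pointwise, hence $a(0)=a(1)=0$. For $p=2$, $(1+z_n)^2+(1-z_n)^2-2=2z_n^2$, and the normalization \eqref{eq:Cns} with $r=1$ gives $\int_{B_1} z_n^2|z|^{-n-2s}\,dz=2/C_{n,s}$, whence $a(2)=-C_{n,s}\cdot(2/C_{n,s})=-2$. There is no serious obstacle here: this lemma is purely a homogeneity-plus-computation statement and the only mild check is the integrability noted above.
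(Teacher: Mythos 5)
Your proof is correct and follows the same core approach as the paper: use \Cref{lem:scaling} and tangential translation invariance to force the form $a(p)x^{p-2}$, then evaluate at $x_n=1$ to identify $a(p)=\cL_{\R^n_+}x^p\big|_{x_n=1}=-\psi(p,1)$. The paper's proof stops there and simply states the particular values, whereas you also verify $a(0)=a(1)=0$ and $a(2)=-2$ explicitly (via pointwise vanishing of the integrand for $p\in\{0,1\}$ and the normalization \eqref{eq:Cns} for $p=2$) and note the integrability near $z=0$; these are minor, correct additions rather than a different route.
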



\begin{remark}
When $n=1$, by a series expansion,
\begin{align*}
a'(p)
&=
-C_{1,s}
\int_0^1
	\dfrac{
		(1+y)^p \log(1+y)
		+(1-y)^p \log(1-y)
	}{
		y^{1+2s}
	}
\,dy\\
&=\sum_{k\geq 0,\,\ell\geq 1,\,k+\ell\geq 2}
	\binom{p}{k}
	\dfrac{1}{\ell}
	[(-1)^k+(-1)^\ell]
	\int_0^1
		\dfrac{
			y^{k+\ell}
		}{
			y^{1+2s}
		}
	\,dy\\
&=\sum_{m=1}^{\infty}
	\sum_{\ell=1}^{2m}
	\dfrac{
		(-1)^{\ell}
	}{
		\ell(m-s)
	}
	\binom{p}{2m-\ell}.
\end{align*}
However, it is not clear from this expression if $a(p)$ is monotone or signed for large $p$.
\end{remark}

\begin{proof}
Let $r>0$. Applying \Cref{lem:scaling} to $\Omega=r^{-1}\Omega=\R^n_+$ and $u(x)=x^p$, we see that
\[\begin{split}
\cL_{\R^n_+}(rx)^{p}\big|_{x=1}
=r^2 \cL_{\R^n_+}x^{p}\big|_{x=r}.
\end{split}\]
By linearity,
\[\begin{split}
\cL_{\R^n_+}x^p\big|_{x=r}
=r^{p-2} \cL_{\R^n_+}x^{p}\big|_{x=1}.
\end{split}\]
Thus $a(p)=\cL_{\R^n_+}x^p\big|_{x=1}$.
\end{proof}

We will use the following version of strong maximum principle for functions with non-negative data in the adjacent interval of the same length.

\begin{lem}[Strong maximum principle]
\label{lem:MP-1D}
Suppose $u\in C^{2s+}((0,1))\cap C([0,2))$ solves
\begin{equation}\label{eq:MP-1D}\begin{cases}
\cL_{\R^n_+}u\geq 0
    & \Textin (0,1),\\
u\geq 0
    & \Textin [1,2),\\
u(0)\geq 0.
\end{cases}\end{equation}
Then either $u\equiv0$ on $(0,1)$, or
\[
u> 0 \quad \texton (0,1).
\]
\end{lem}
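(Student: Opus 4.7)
The plan is to mirror the proof of the mid-range strong maximum principle (\Cref{prop:MP-mid}) in the 1D half-line setting, and then add a short iteration step to upgrade non-negativity into the strict positivity alternative. The key observation is that for a one-dimensional $u(x_n)$, the operator $\cL_{\R^n_+}u$ evaluated at $x_n=x_0$ depends only on the values of $u$ on $[0,2x_0]$, since the $y$-integration ranges over $|y|<x_0$. Because $u\in C([0,2))$ and $2x_0<2$ throughout $(0,1)$, every integrand we need to write down is well defined.

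\textbf{Step 1: non-negativity.} I would argue by contradiction. If $m:=\min_{[0,1]}u<0$, continuity of $u$ on $[0,1]$ together with $u(0)\geq 0$ and $u(1)\geq 0$ (inherited from $u\geq 0$ on $[1,2)$) forces $m$ to be attained at some interior $x_0\in(0,1)$. Every $z\in(0,2x_0)$ then satisfies $u(z)\geq m$: either $z\in(0,1]$ and $u(z)\geq m$ by minimality, or $z\in(1,2x_0)\subset[1,2)$ and $u(z)\geq 0>m$. Hence $2u(x_0)-u(x_0+y_n)-u(x_0-y_n)\leq 0$ pointwise in the 1D reduction of the integrand, so $\cL_{\R^n_+}u(x_0)\leq 0$; combined with the hypothesis $\cL_{\R^n_+}u(x_0)\geq 0$ this gives equality, and by continuity $u\equiv m$ on $[0,2x_0]$, contradicting $u(0)\geq 0>m$.

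\textbf{Step 2: strong maximum principle via propagation of zeros.} Now $u\geq 0$ on $(0,1)$. If $u(x_0)=0$ at some $x_0\in(0,1)$, the same reasoning (with $m=0$ as the minimum) gives $u\equiv 0$ on $[0,2x_0]$. Setting $Z=\{x\in(0,1):u(x)=0\}$ and $\sigma=\sup Z$, and picking $x_k\in Z$ with $x_k\to\sigma$, the propagation yields $u\equiv 0$ on $\bigcup_k[0,2x_k]=[0,2\sigma)$, so $(0,\min(2\sigma,1))\subset Z$ and therefore $\sigma\geq\min(2\sigma,1)$. This dichotomy forces either $\sigma=0$, so $Z=\varnothing$ and $u>0$ on $(0,1)$, or $\sigma=1$, in which case $u\equiv 0$ on $[0,2)\supset(0,1)$.

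I do not foresee a serious obstacle. The only point requiring mild care is the bookkeeping in Step 2: when $2x_0<1$ the propagation merely enlarges the zero set strictly inside $(0,1)$, while when $2x_0\geq 1$ it already fills $(0,1)$. Working with the supremum $\sigma$ handles both regimes uniformly and makes the dichotomy $\sigma\in\{0,1\}$ immediate, so no separate induction is needed.
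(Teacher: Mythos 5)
Your proof is correct and follows essentially the same idea as the paper, which disposes of the lemma by invoking \Cref{prop:MP-mid} with $G=\R^{n-1}\times(0,1)$ and $G_*=\R^{n-1}\times(0,2)$. Where you go beyond the paper is in spelling out the propagation step: \Cref{prop:MP-mid} as stated only yields $u\geq 0$ in $G$, and its proof passes rather quickly from local equality at the minimum $x_0$ (which a priori gives $u\equiv u(x_0)$ only on $B_{d(x_0)}(x_0)\cap G$) to the global conclusion $u\equiv 0$ in $G$; your Step 2, tracking the zero set $Z$ via its supremum $\sigma$ and observing that each zero $x_0$ forces $u\equiv 0$ on $[0,2x_0]$, makes this propagation and the resulting dichotomy $\sigma\in\{0,1\}$ explicit. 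That is a genuine tightening of the argument, and the bookkeeping in the two regimes $2x_0<1$ and $2x_0\geq 1$ is handled correctly.
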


\begin{proof}
This is simply \Cref{prop:MP-mid} with $G=\R^{n-1}\times (0,1)$ and $G_*=\R^{n-1}\times (0,2)$.
\end{proof}

\subsection{Boundary Harnack inequality}

First of all we show \Cref{prop:BH} for $\alpha=0$, using interior Harnack inequality and comparison arguments.

\begin{lem}[Two-sided estimate]
\label{lem:BH-2sided}
Suppose $u\in C^{2s+}((0,1))\cap C([0,2))$ solves
\[\begin{cases}
\cL_{\R^n_+}u=0
    & \Textin (0,1),\\
u>0
    & \Textin (0,2),\\
u(0)=0,
\end{cases}\]
then there exists $C>0$ universal such that
\[
	C^{-1}u(1)
\leq
    \dfrac{u(x)}{x}
\leq C u(1)
    \quad \texton (0,1].
\]
\end{lem}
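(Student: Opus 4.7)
The plan is to compare $u$ with multiples of the model $\cL_{\R^n_+}$-harmonic function $x\mapsto x$, whose harmonicity follows from \Cref{lem:mono} (since $a(1)=0$). By the linearity of \eqref{eq:1D} in $u$, I normalize $u(1)=1$; it then suffices to produce universal constants $c,C>0$ with $c\leq u(x)/x \leq C$ on $(0,1]$.

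The comparison is carried out via the 1D strong maximum principle \Cref{lem:MP-1D}. For the upper bound, set $v(x)=Cx-u(x)$: this is $\cL_{\R^n_+}$-harmonic on $(0,1)$ and satisfies $v(0)=0$, so \Cref{lem:MP-1D} delivers $v\geq 0$ on $(0,1)$ as soon as $v\geq 0$ on $[1,2)$. Similarly, for the lower bound I consider $w(x)=u(x)-\tfrac{c}{2}x$, which is also $\cL_{\R^n_+}$-harmonic on $(0,1)$ with $w(0)=0$, so \Cref{lem:MP-1D} yields $w\geq 0$ on $(0,1)$ as soon as $w\geq 0$ on $[1,2)$.

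The job therefore reduces to a universal two-sided bound $c\leq u\leq C$ on $[1,2)$: once this is secured, the inequalities on $[1,2)$ are automatic, because $Cx-u \geq C-C=0$ (using $x\geq 1$) and $u-\tfrac{c}{2}x\geq c-\tfrac{c}{2}\cdot 2=0$ (using $x<2$). I would extract these bounds from the interior Harnack inequality \Cref{lem:Harnack-1}, applied to $u$ on balls $B_r(z_0 e_n)\subset\R^n_+$ inside the region $\cL u=0$, chained along the $x_n$-axis, with the chain anchored at $x=1$ via the continuity of $u$ up to $1$ and the fact that $u(1)=1$.

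The main obstacle is in this last step: \Cref{lem:Harnack-1} applies only on balls contained in the region where the equation is posed, namely $\R^{n-1}\times(0,1)$, so it cannot be applied on balls whose $x_n$-projection crosses into $[1,2)$. Producing the needed control on $[1,2)$ therefore demands exploiting the nonlocal structure of $\cL u=0$ near $x_n=1^-$—whose integral representation couples the values of $u$ on $(0,1)$ with those on $[1,2)$—together with the continuity of $u$ at $x=1$ and the positivity of $u$ on $(0,2)$, to transfer the interior Harnack control outward.
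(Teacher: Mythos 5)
Your overall strategy—normalize $u(1)=1$, compare $u$ against scalar multiples of the linear solution $x$, and invoke the one-dimensional strong maximum principle \Cref{lem:MP-1D} with Harnack supplying the anchoring constants—matches the paper's. However, there is a genuine gap, and you put your finger on it yourself without resolving it: you apply \Cref{lem:MP-1D} on the full interval $(0,1)$, so the comparison functions $Cx-u$ and $u-\frac{c}{2}x$ must be shown non-negative on $[1,2)$, which would require a universal two-sided bound $c\leq u\leq C$ on $[1,2)$. But $\cL_{\R^n_+}u=0$ is only assumed on $(0,1)$, so the interior Harnack inequality \Cref{lem:Harnack-1} gives you nothing on $[1,2)$, and your fallback suggestion of extracting such control from the nonlocal coupling in the equation near $x_n=1^-$ is not developed and would be a considerably more delicate argument than what is actually needed.

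The fix is simply to shrink the interval on which the maximum principle is applied. Apply \Cref{lem:MP-1D} (after the obvious rescaling, justified by \Cref{lem:scaling}) on $(0,\frac12)$ rather than on $(0,1)$: the domain of interaction of $(0,\frac12)$ in $\R_+$ is $(0,1)$, so the comparison functions only need to be non-negative on $[\frac12,1]$. That interval lies inside the region where the equation holds, so the interior Harnack inequality \Cref{lem:Harnack-1}, chained along the segment and anchored at $u(1)=1$, yields a universal bound $C^{-1}\leq u\leq C$ on $[\frac12,1]$. This is precisely the paper's route: it applies \Cref{lem:MP-1D} to $u-C^{-1}x$ and $2Cx-u$ on $(0,\frac12)$, with the Harnack bound on $[\frac12,1]$ furnishing the boundary data. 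The obstacle you flagged disappears once the comparison interval is chosen short enough that its interaction domain stays inside $(0,1)$.
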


\begin{proof}
By replacing $u$ by $u/u(1)$ if necessary, we may assume that $u(1)=1$. By \Cref{lem:Harnack-1}, there exists $C>0$ universal such that
\[
C^{-1} \leq u(x) \leq C
    \quad \textfor x\in[\tfrac12,1].
\]
Applying \Cref{lem:MP-1D} to $u-C^{-1}x$ and $2Cx-u$ on $(0,\tfrac12)$ yields the result.
\end{proof}

\begin{cor}[Boundary Harnack inequality]
\label{lem:BH-quotient}
Let $u\in C^{2s+}((0,1))\cap C([0,2))$ be a solution to
\begin{equation}\label{eq:BH-quotient}\begin{cases}
\cL_{\R^n_+}u=0
    & \Textin (0,1),\\
u>0
    & \Textin (0,2),\\
u(0)=0.
\end{cases}\end{equation}
Then there exists $C>0$ universal such that
\[
\sup_{x\in(0,1]}
    \dfrac{u(x)}{x}
\leq C
\inf_{x\in(0,1]}
    \dfrac{u(x)}{x}.
\]
\end{cor}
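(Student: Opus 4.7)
The statement is a direct consequence of the two-sided estimate \Cref{lem:BH-2sided}, so my plan is to deduce it by a short division argument.

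First I would invoke \Cref{lem:BH-2sided} applied to $u$ itself: since $u$ satisfies the hypotheses of that lemma, we get a universal constant $C>0$ with
\[
C^{-1} u(1) \leq \frac{u(x)}{x} \leq C\, u(1), \qquad x\in(0,1].
\]
Taking the supremum of the right inequality and the infimum of the left inequality over $x\in(0,1]$, we obtain
\[
\sup_{x\in(0,1]} \frac{u(x)}{x} \leq C\, u(1)
\quad \text{and} \quad
\inf_{x\in(0,1]} \frac{u(x)}{x} \geq C^{-1} u(1).
\]

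Dividing these two bounds and noting $u(1) > 0$ by the strict positivity hypothesis, we conclude
\[
\sup_{x\in(0,1]} \frac{u(x)}{x}
\leq C^{2}
\inf_{x\in(0,1]} \frac{u(x)}{x},
\]
which is the desired inequality with constant $C^2$ (renamed $C$). There is no real obstacle here: the work was already done in establishing \Cref{lem:BH-2sided}, which itself relied on the interior Harnack inequality (\Cref{lem:Harnack-1}) to pin $u$ to a multiplicative constant on $[\tfrac12,1]$ and on the one-dimensional strong maximum principle (\Cref{lem:MP-1D}) to transfer this control down to the boundary via the explicit sub/super-solution $x$. The corollary just records the symmetric form $\sup \lesssim \inf$ that is most convenient for the subsequent improvement-of-oscillation argument (\Cref{lem:BH-osc}).
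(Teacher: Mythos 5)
Your proof is correct and follows exactly the route the paper intends: the corollary is stated immediately after \Cref{lem:BH-2sided} with no separate proof, precisely because the $\sup\lesssim\inf$ form follows by the trivial division argument you give. Nothing to add.
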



\subsection{Boundary H\"{o}lder regularity}

\begin{lem}[Improvement of oscillation]
\label{lem:BH-osc}
Suppose $u\in C^{2s+}((0,1))\cap C([0,2))$ solves
\[\begin{cases}
\cL_{\R^n_+}u=0
    & \Textin (0,1),\\
u>0
    & \Textin (0,2),\\
u(0)=0,
\end{cases}\]
For $k=1,2,\dots$, denote
\[
m_k=\inf_{x\in(0,4^{-k})}\frac{u(x)}{x},
    \quad
M_k=\sup_{x\in(0,4^{-k})}\frac{u(x)}{x}.
\]
Then there exists a universal constant $c\in(0,1)$ such that for any $k\geq 1$,
\[
M_{k+1}-m_{k+1}
\leq c (M_k-m_k).
\]
\end{lem}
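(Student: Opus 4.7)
The plan is to reduce the improvement of oscillation, after a rescaling, to a single application of the boundary Harnack inequality (\Cref{lem:BH-2sided}) to an auxiliary function obtained by subtracting a linear comparison. First I would use the scaling property (\Cref{lem:scaling}) to set $U(x) := u(4^{-k-1}x)/4^{-k-1}$, which is a positive $\cL_{\R^n_+}$-harmonic function on $(0, 4^{k+1})$, positive on $(0, 2\cdot 4^{k+1}) \supset (0,8)$, and vanishing at $0$. Under this normalization, $m_k = \inf_{(0,4)} U/x$, $M_k = \sup_{(0,4)} U/x$, while $m_{k+1} = \inf_{(0,1)} U/x$ and $M_{k+1} = \sup_{(0,1)} U/x$, so the target reduces to: the oscillation of $U/x$ on $(0,1)$ is at most $(1-c)$ times its oscillation on $(0,4)$.

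Set $m := m_k$, $M := M_k$, and introduce $\phi := U - mx$ and $\psi := Mx - U$. Since $a(1)=0$ by \Cref{lem:mono}, linear functions are $\cL_{\R^n_+}$-harmonic, so $\phi$ and $\psi$ are $\cL_{\R^n_+}$-harmonic on $(0,4)$. By the definitions of $m$ and $M$ they are non-negative on $(0,4)$, vanish at the origin, and satisfy the useful identity $\phi + \psi = (M-m)x$. If $\phi \equiv 0$ (respectively $\psi \equiv 0$) on $(0,2)$, then $U \equiv mx$ (respectively $U \equiv Mx$) there, forcing $m_{k+1} = M_{k+1}$ and the bound is trivial. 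Otherwise, the 1D strong maximum principle (\Cref{lem:MP-1D}, applied after rescaling $(0,2)$ to $(0,1)$ and using that $\phi, \psi \geq 0$ on the exterior interaction set $[2,4]$) upgrades non-negativity to strict positivity of $\phi$ and $\psi$ on $(0, 2)$.

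Evaluating at $x=1$ gives $\phi(1) + \psi(1) = M-m$, so at least one of them, say $\phi(1)$, satisfies $\phi(1) \geq (M-m)/2$. Now $\phi$ fulfills every hypothesis of \Cref{lem:BH-2sided} — $\cL_{\R^n_+}$-harmonic on $(0,1)$, strictly positive on $(0,2)$, vanishing at the origin — so that lemma delivers $\phi(x)/x \geq C^{-1}\phi(1) \geq c(M-m)$ on $(0,1]$, i.e.\ $U(x)/x \geq m + c(M-m)$ there. Taking the infimum yields $m_{k+1} \geq m_k + c(M_k - m_k)$, and combined with the trivial $M_{k+1} \leq M_k$ this produces $M_{k+1} - m_{k+1} \leq (1-c)(M_k - m_k)$. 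The symmetric case $\psi(1) \geq (M-m)/2$ is handled identically and gives $M_{k+1} \leq M_k - c(M_k - m_k)$, yielding the same conclusion. The chief delicacy is securing the strict positivity of $\phi$ on an interval twice the length of where the equation is applied; this is precisely why the rescaling is arranged so that $m_k, M_k$ are taken on $(0,4)$, twice $(0,2)$, which is in turn twice the interval $(0,1)$ appearing in \Cref{lem:BH-2sided}.
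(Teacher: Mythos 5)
Your proof is correct and follows essentially the same route as the paper: rescale, form the two comparison differences $u - m_k x$ and $M_k x - u$, upgrade non-negativity to strict positivity via the strong maximum principle \Cref{lem:MP-1D}, and then invoke the boundary Harnack machinery. The only variation is cosmetic: the paper applies \Cref{lem:BH-quotient} to \emph{both} $u - m_k x$ and $M_k x - u$ and adds the two resulting inequalities to obtain $c = \tfrac{C-1}{C+1}$, whereas you apply \Cref{lem:BH-2sided} to whichever of $\phi,\psi$ is larger at $x=1$ (which must be at least $(M_k-m_k)/2$ by the identity $\phi+\psi=(M-m)x$) and combine with the trivial monotonicity $m_{k+1}\geq m_k$, $M_{k+1}\leq M_k$. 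Both arguments deliver the geometric decay; yours is arguably a touch leaner since you only need the one-sided Harnack lower bound once.
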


\begin{proof}
By replacing $u$ by $u/u(1)$ if necessary, we may assume that $u(1)=1$. By \Cref{lem:BH-quotient}, we can take $m_1=C_3^{-1}$ and $M_1=C_3$.
We assume in the following that $u(x)/x$ is not a constant; otherwise we can trivially take $M_k=m_k$ for all $k\geq2$.

Suppose $M_k>m_k>0$ for $k\geq1$ is known, such that
\[
u - m_k x \geq 0
    \quad \textand \quad
M_k x - u \geq 0
    \quad \Textin (0,4^{-k}).
\]
By \Cref{lem:scaling}, both the functions $(u-m_k)(2^{-1}4^{-k}x)$ and $(M_k-u)(2^{-1}4^{-k}x)$ are $\cL_{\R_+}$-harmonic and non-negative on $(0,2)$. As they solve \eqref{eq:MP-1D}, the strong maximum principle \Cref{lem:MP-1D}, they are strictly positive on $(0,1)$. This means that
\[
u - m_k x > 0
    \quad \textand \quad
M_k x - u > 0
    \quad \Textin (0,2^{-1}4^{-k}).
\]
Similarly, the functions $(u-m_k)(4^{-k-1}x)$ and $(M_k-u)(4^{-k-1}x)$ solve \eqref{eq:BH-quotient}, so that \Cref{lem:BH-quotient} implies that
\[\begin{cases}
\displaystyle
\sup_{x\in(0,1)}
    \dfrac{(u-m_k)(4^{-k-1}x)}{x}
\leq C
\inf_{x\in(0,1)}
    \dfrac{(u-m_k)(4^{-k-1}x)}{x},\\
\displaystyle
\sup_{x\in(0,1)}
    \dfrac{(M_k - u)(4^{-k-1}x)}{x}
\leq C
\inf_{x\in(0,1)}
    \dfrac{(M_k - u)(4^{-k-1}x)}{x}.
\end{cases}\]
Rescaling and multiplying throughout by the normalizing factor $4^{k+1}$, we have
\[\begin{cases}
\displaystyle
\sup_{x\in(0,4^{-(k+1)})}
    \dfrac{u(x) - m_k x}{x}
\leq C
\inf_{x\in(0,4^{-(k+1)})}
    \dfrac{u(x) - m_k x}{x},\\
\displaystyle
\sup_{x\in(0,4^{-(k+1)})}
    \dfrac{M_k x - u(x)}{x}
\leq C
\inf_{x\in(0,4^{-(k+1)})}
    \dfrac{M_k x - u(x)}{x}.
\end{cases}\]
This means that
\[\begin{cases}
M_{k+1}-m_k \leq C(m_{k+1}-m_k),\\
M_k-m_{k+1} \leq C(M_k-M_{k+1}).
\end{cases}\]
Adding up these two inequalities,
\[
(M_{k+1}-m_{k+1})+(M_k-m_k)
\leq C\left(
        (M_k-m_k)
        -(M_{k+1}-m_{k+1})
    \right).
\]
Thus
\[
M_{k+1}-m_{k+1}
\leq
    c(M_k-m_k),
        \quad
c = \dfrac{C-1}{C+1}.
\qedhere
\]
\end{proof}

Now a standard iteration yields the H\"{o}lder continuity of the quotient. 

\begin{proof}[Proof of \Cref{prop:BH}]
By replacing $u$ by $u/u(1)$ if necessary, we assume that $u(1)=1$. By \Cref{lem:int-est}, we know that $u\in C^{\beta}$ and $\norm[C^{\beta}(B_{d/2}(d))]{u} \leq Cd^{-\beta}$ for $d>0$. Fix $\theta>(1+\beta)/\beta>1$. Let $x,y\in[0,1/4)$. Write $r=|x-y|$, $d=x\wedge y$. If $r\leq d^{\theta}/2$, then by \Cref{lem:int-est},
\begin{align*}
\abs{\frac{u(x)}{x}-\frac{u(y)}{y}}
&\leq C\frac{1}{x}
	\norm[C^{\beta}(B_{d/2}(d))]{u}
	r^{\beta}
	+Cu(y)
	\norm[C^{\beta}(B_{d/2}(d))]{\frac{1}{x}}
	r^{\beta}\\
&\leq Cx^{-1}d^{-\beta}r^{\beta}
	+Cyd^{-1-\beta}r^{\beta}
\leq Cd^{-1-\beta}r^{\beta}
\leq Cr^{\beta-\frac{1+\beta}{\theta}}.
\end{align*}
If $r\geq \frac{d^\theta}{2}$, then $x,y\in (0,d+r)$ and by iterating \Cref{lem:BH-osc}, we have
\begin{align*}
\abs{\frac{u(x)}{x}-\frac{u(y)}{y}}
\leq \sup_{(0,d+r)}\frac{u}{x}
	-\inf_{(0,d+r)}\frac{u}{x}
\leq C(d+r)^{\beta}
\leq Cr^{\frac{\beta}{\theta}}.
\end{align*}
Hence,
\[
\norm[C^{\alpha}([0,\frac14))]{\frac{u}{x}}
\leq C,
	\quad \text{ for }
\alpha=(\beta-\tfrac{1+\beta}{\theta}) \wedge \tfrac{\beta}{\theta},
\]
as desired.
%
%
\end{proof}

\section{H\"{o}lder regularity up to boundary}

\subsection{Pointwise boundary Harnack inequality}

Using the global maximum principle, we obtain a direct pointwise bound which is good for controlling the interior behavior.

\begin{lem}[Interior control]
\label{lem:MP-for-int}
Let $u\in C^{2s+}(\Omega)\cap C(\overline\Omega)$ be a solution to
\[\begin{cases}
\cL_\Omega u=f
    & \Textin \Omega\\
u=g
    & \texton \p\Omega.
\end{cases}\]
Then
\[
\norm[L^\infty(\Omega)]{u}
\leq \dfrac{(\diam\Omega)^2}{2n}
    \norm[L^\infty(\Omega)]{f}
    +\norm[L^\infty(\p\Omega)]{g}.
\]
\end{lem}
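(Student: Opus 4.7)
The plan is to prove the bound by a direct comparison argument using the global super-solution $\varphi^{(1)}(x) = (M^2 - |x|^2)/(2n)$ from \Cref{lem:super-global}. After choosing coordinates so that $0 \in \Omega$, we have $\Omega \subset B_M$ with $M = \diam\Omega$, hence $0 \leq \varphi^{(1)} \leq M^2/(2n)$ on $\overline{\Omega}$, and $\cL_\Omega \varphi^{(1)} = 1$ in $\Omega$. This barrier, scaled by $\norm[L^\infty(\Omega)]{f}$ and raised by the boundary sup-norm, exactly gives the claimed constant.

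Concretely, I would set
\[
w^{\pm}(x) = \pm u(x) - \norm[L^\infty(\p\Omega)]{g} - \norm[L^\infty(\Omega)]{f}\,\varphi^{(1)}(x).
\]
By linearity of $\cL_\Omega$,
\[
\cL_\Omega w^{\pm} = \pm f - \norm[L^\infty(\Omega)]{f} \leq 0 \quad \Textin \Omega,
\]
while on $\p\Omega$ we have $\pm g \leq \norm[L^\infty(\p\Omega)]{g}$ and $\varphi^{(1)} \geq 0$, so $w^{\pm} \leq 0$. Applying the global maximum principle \Cref{lem:MP-global} to $-w^{\pm}$ (which is $\cL_\Omega$-super-harmonic and non-negative on $\p\Omega$) gives $w^{\pm} \leq 0$ throughout $\overline{\Omega}$. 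Using $\varphi^{(1)} \leq M^2/(2n)$ then yields
\[
\pm u(x) \leq \norm[L^\infty(\p\Omega)]{g} + \dfrac{(\diam\Omega)^2}{2n}\norm[L^\infty(\Omega)]{f},
\]
which is the desired bound.

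There is no real obstacle here: the argument is the standard comparison-with-torsion-function proof, and the only point worth checking is the admissibility of $w^{\pm}$ for the maximum principle (continuity on $\overline{\Omega}$ and regularity $C^{2s+}$ in $\Omega$), both of which follow from the regularity assumed on $u$ and the smoothness of the quadratic barrier $\varphi^{(1)}$. The sharp constant $(\diam\Omega)^2/(2n)$ comes directly from the explicit supremum of $\varphi^{(1)}$, and no further information about $\Omega$ or $s$ is needed.
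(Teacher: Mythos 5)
Your proof is correct and follows essentially the same route as the paper: the paper's one-line proof applies \Cref{lem:MP-global} to $\norm[L^\infty(\Omega)]{f}\varphi^{(1)} + \norm[L^\infty(\p\Omega)]{g} \pm u$, which is exactly $-w^{\mp}$ in your notation, so the two arguments coincide up to a sign convention.
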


\begin{proof}
Use \Cref{lem:MP-global} on $    \norm[L^\infty(\Omega)]{f}\varphi^{(1)}
    +\norm[L^\infty(\p\Omega)]{g}
    \pm u
$, with $\varphi^{(1)}$ given in \Cref{lem:super-global}.
%
%
\end{proof}


We can now control a solution by the distance function.
\begin{lem}[Global boundary Harnack principle]
\label{lem:BH-f-global}
Suppose $u\in C^{2s+}(\Omega)\cap C(\overline\Omega)$ solves
\[\begin{cases}
\cL_\Omega u = f
    & \Textin \Omega,\\
u=0
    & \texton \p\Omega.
\end{cases}\]
Then there exists a universal constant $C$ such that
\[
\norm[L^\infty(\Omega)]{
    \dfrac{u}{d}
}
\leq C\norm[L^\infty(\Omega)]{f}.
\]
\end{lem}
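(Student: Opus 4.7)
The plan is to combine the local boundary barrier from \Cref{prop:barrier} with the global $L^\infty$ bound from \Cref{lem:MP-for-int}. The argument splits naturally into two regions: a boundary strip where the barrier gives a bound proportional to $d$, and an interior region where $d$ is bounded below so that $\norm[L^\infty(\Omega)]{u}/d$ is automatically controlled.

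First I would apply \Cref{lem:MP-for-int} to obtain $\norm[L^\infty(\Omega)]{u} \leq C_0 \norm[L^\infty(\Omega)]{f}$, with $C_0$ depending only on $\Omega$. Then for each $x_0 \in \p\Omega$, let $\varphi^{(x_0)}$ and $r_0 > 0$ be the barrier and scale from \Cref{prop:barrier} applied to $U = \Omega$; they depend only on $n$, $s$, $\Omega$ via the uniform exterior ball radius. Set $G = \Omega \cap B_{r_0}(x_0)$. A key geometric observation is that for $y \in G$ one has $d(y) \leq |y - x_0| < r_0$, so the interaction set $G_* = \bigcup_{y \in G} B_{d(y)}(y)$ stays inside $\Omega \cap B_{2r_0}(x_0)$, i.e.\ inside the domain of definition of $\varphi^{(x_0)}$. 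Writing $K := \norm[L^\infty(\Omega)]{f} + \norm[L^\infty(\Omega)]{u}$, I would compare $u$ with $K\varphi^{(x_0)}$: one checks $\cL_\Omega(K\varphi^{(x_0)} - u) \geq K - \norm[L^\infty(\Omega)]{f} \geq 0$ in $G$, and $K\varphi^{(x_0)} - u \geq 0$ on $\overline{G_*} \setminus G$, using $\varphi^{(x_0)} \geq 0$ on the boundary part (where $u=0$) and $\varphi^{(x_0)} \geq 1$ on the annular part (where $|u| \leq \norm[L^\infty(\Omega)]{u}$). The mid-range strong maximum principle \Cref{prop:MP-mid} then gives $u \leq K\varphi^{(x_0)}$ in $G$; the reverse inequality is symmetric.

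For $x \in \Omega$ with $d(x) < r_0/2$, pick a nearest boundary point $x_0$, so that $x$ lies on the inward normal ray from $x_0$, where \Cref{prop:barrier} provides the pointwise bound $\varphi^{(x_0)}(x) \leq C d(x)$. Combined with the previous step, this yields $|u(x)| \leq CK\, d(x) \leq C(1+C_0)\norm[L^\infty(\Omega)]{f}\, d(x)$. For $x$ with $d(x) \geq r_0/2$, we use directly $|u(x)|/d(x) \leq 2r_0^{-1} C_0 \norm[L^\infty(\Omega)]{f}$. Taking the supremum over $\Omega$ yields the claim.

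The main obstacle is to ensure that the barrier $\varphi^{(x_0)}$ may be used throughout the comparison set $\overline{G_*}$; this rests on the localized nature of $\cL_\Omega$, whose interactions from a point $y \in G$ reach only distance $d(y) < r_0$, so $G_* \subset \Omega \cap B_{2r_0}(x_0)$. A second subtlety is that \Cref{prop:barrier} supplies the pointwise bound $\varphi^{(x_0)} \leq Cd$ only along the normal to $x_0$, not in a full neighborhood. This forces the barrier to be invoked at \emph{each} boundary point individually, and forces us to compare $u(x)$ with $\varphi^{(x_0)}(x)$ precisely at the nearest boundary projection of $x$ so that $x$ lies on the normal ray from $x_0$.
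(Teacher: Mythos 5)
Your proposal is correct and follows essentially the same route as the paper's own proof: an $L^\infty$ bound via \Cref{lem:MP-for-int}, followed by a comparison with the barrier $\varphi^{(x_0)}$ from \Cref{prop:barrier} over the mid-range domain $G_*\subset\Omega\cap B_{2r_0}(x_0)$ using \Cref{prop:MP-mid}, then reading off the linear growth along the normal to the nearest boundary point. Your use of the constant $K=\norm[L^\infty(\Omega)]{f}+\norm[L^\infty(\Omega)]{u}$ in the barrier comparison is a slightly more explicit bookkeeping than the paper's, which compares directly against $\norm[L^\infty(\Omega)]{f}\varphi^{(x_0)}$ and implicitly absorbs the factor relating $\norm[L^\infty]{u}$ to $\norm[L^\infty]{f}$ into the universal constant; both amount to the same argument.
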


\begin{proof}
Since $\Omega$ is a bounded domain of class $C^{1,1}$, there exists $b>0$ such that an exterior tangent ball of radius $b$ exists at each point $x_0\in \p\Omega$. By \Cref{prop:barrier}, in a suitable coordinate system there exists $\varphi^{(x_0)}$ 
such that
\[\begin{cases}
\cL_{U}\bigl(
    \norm[L^\infty(\Omega)]{f}\varphi^{(x_0)} \pm u
\bigr) \geq 0
    & \Textin \Omega\cap B_{r_0}(x_0),\\
\norm[L^\infty(\Omega)]{f}\varphi^{(x_0)} \pm u
\geq 0
    & \Textin \bigl(\Omega\cap B_{2r_0}(x_0)\setminus B_{r_0}(x_0)\bigr) \cup \p\Omega.
\end{cases}\]
Since
\[
\bigcup_{x\in U \cap B_{r_0}(x_0)}
    B_{d_\Omega(x)}(x)
\subset
    B_{2r_0}(x_0),
\]
\Cref{prop:MP-mid} applies, we have
\[
|u(x)|\leq \norm[L^\infty(\Omega)]{f}\varphi^{(x_0)}(x).
    \quad \forall x\in U \cap B_{r_0}(x_0)
\]
Since $\varphi^{(x_0)}$ grows linearly away from the boundary, we have
\[
|u(x)|
\leq C\norm[L^\infty(\Omega)]{f}d_\Omega(x),
    \quad \textfor d_\Omega(x)<r_0.
\]
The interior estimate simply follows from \Cref{lem:MP-for-int}.
\end{proof}


We present a local analogue in a half ball $B_r^+=B_r \cap \set{x_n>0}$, where $r>0$.

\begin{lem}[Local boundary Harnack principle]
\label{lem:BH-f-local}
Suppose $u\in C^{2s+}(B_1^+) \cap C(\overline{B_2^+})$ solves
\[\begin{cases}
\cL_{\R^n_+} u = 0
    & \Textin B_1^+,\\
u=0
    & \texton \p\R^n_+ \cap B_2^+.
\end{cases}\]
Then
\[
\norm[L^\infty(B_{1/2}^+)]{
    \dfrac{u}{x_n}
}
\leq
    C\norm[L^\infty(B_2^+)]{u}.
\]
Here $C$ depends only on $n$ and $s$.
\end{lem}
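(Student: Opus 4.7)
The plan is to localize the argument of \Cref{lem:BH-f-global}, using the barrier from \Cref{prop:barrier} anchored at the base-point of $x$, combined with the mid-range strong maximum principle \Cref{prop:MP-mid}. After normalizing so that $\|u\|_{L^\infty(B_2^+)}=1$, the conclusion splits according to whether $x\in B_{1/2}^+$ is deep in the interior or close to $\p\R^n_+$, with threshold a universal radius $r_0>0$.

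For $x_n\geq r_0$, the bound $|u(x)|/x_n\leq 1/r_0$ is immediate from the normalization. For $x_n<r_0$, I would set $x_0=(x',0)\in\p\R^n_+\cap B_{1/2}$. Since $\R^n_+$ admits an exterior tangent ball of any radius at each boundary point, \Cref{prop:barrier} applied to $U=\R^n_+$ (with, say, $b=1$) supplies a function $\varphi^{(x_0)}$ on $\R^n_+\cap B_{2r_0}(x_0)$, where $r_0$ is the universal constant furnished by that proposition, shrunk if necessary so that $B_{2r_0}(x_0)\subset B_2$.

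The core step is then to apply \Cref{prop:MP-mid} to $\varphi^{(x_0)}\pm u$ on $G=\R^n_+\cap B_{r_0}(x_0)$. The key geometric verification is
\[
G_*=\bigcup_{y\in G}B_{y_n}(y)\subset \R^n_+\cap B_{2r_0}(x_0),
\]
which holds because $y_n\leq|y-x_0|<r_0$ for every $y\in G$. On $\overline{G_*}\setminus G$: on the flat portion in $\p\R^n_+$ one has $u=0$ and $\varphi^{(x_0)}\geq 0$; on the annular portion $\R^n_+\cap\bigl(B_{2r_0}(x_0)\setminus B_{r_0}(x_0)\bigr)$ one has $|u|\leq 1\leq\varphi^{(x_0)}$. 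Combined with $\cL_{\R^n_+}(\varphi^{(x_0)}\pm u)=\cL_{\R^n_+}\varphi^{(x_0)}\geq 1\geq 0$ in $G$, \Cref{prop:MP-mid} yields $|u|\leq\varphi^{(x_0)}$ throughout $G$.

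Finally, since $x=(x',x_n)$ lies on the normal line through $x_0=(x',0)$ in the direction $\nu(x_0)=-e_n$, the last barrier property from \Cref{prop:barrier} gives $\varphi^{(x_0)}(x)\leq C\,d_{\R^n_+}(x)=Cx_n$, hence $|u(x)|/x_n\leq C$. The only mild obstacle is the clean bookkeeping of $G_*$ and the hypotheses on $\overline{G_*}\setminus G$; once these are verified the argument is essentially a localization of \Cref{lem:BH-f-global}, with $\|u\|_{L^\infty(B_2^+)}$ playing the role formerly played by $\|f\|_{L^\infty(\Omega)}$ and the exterior-ball geometry of the half-space replacing the $C^{1,1}$ geometry of $\Omega$.
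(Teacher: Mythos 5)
Your proof is correct and follows essentially the same path as the paper's: normalize, apply \Cref{prop:barrier} with $b=1$ at the foot $x_0=(x',0)$, verify the hypotheses of \Cref{prop:MP-mid} on $G=\R^n_+\cap B_{r_0}(x_0)$, conclude $|u|\leq\varphi^{(x_0)}$ there, and read off the bound along the normal line using $\varphi^{(x_0)}\leq Cd_{\R^n_+}$, with the trivial estimate $|u|/x_n\leq r_0^{-1}$ handling the interior region $x_n\geq r_0$. The only difference is that you make explicit the geometric verification $G_*\subset\R^n_+\cap B_{2r_0}(x_0)$ (via $y_n\leq|y-x_0|<r_0$), which the paper leaves implicit; this is a welcome clarification rather than a departure.
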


\begin{proof}
Let $x_0\in \p\R^n_+\cap \partial B_1^+$. By \Cref{prop:barrier} with $b=1$, there is a universal $r_0\in(0,1/2)$ such that
\[\begin{cases}
\cL_{\R^n_+}\bigl(
    \norm[L^\infty(B_2^+)]{u} \varphi^{(x_0)} \pm u
\bigr) \geq 0,
    & \Textin B_{r_0}^+(x_0),\\
\norm[L^\infty(B_2^+)]{u} \varphi^{(x_0)} \pm u
\geq 0
    & \Textin \bigl(B_{2r_0}^+(x_0) \setminus B_{r_0}^+(x_0)\bigr) \cup \bigl( \p\R^n_+ \cap B_{r_0}(x_0) \bigr).
\end{cases}\]
By \Cref{prop:MP-mid},
\[
|u|\leq \norm[L^\infty(B_2^+)]{u} \varphi^{(x_0)}
    \quad \Textin B_{r_0}^+(x_0).
\]
Now for each $x\in B_1^+ \cap \set{0<x_n<r_0}$ we choose $x_0=(x',0)$ to obtain
\[
|u(x)|\leq C\norm[L^\infty(B_2^+)]{u}x_n,
    \quad \Textin \set{|x'|<1/2} \times\set{0<x_n<r_0}.
\]
for $C$ universal. The result follows by combining it with the trivial estimate
\[
|u(x)|\leq r_0^{-1}\norm[L^\infty(B_2^+)]{u}x_n
    \quad \Textin \set{|x'|<1/2} \times \set{r_0<x_n<1/2}. \qedhere
\]
%
%
\end{proof}

\normalcolor

\subsection{H\"{o}lder regularity up to boundary}

As above, we give a global and a local result. 
While practically having an order of $2s$ in the interior, the operator satisfies the classical Hopf boundary lemma. Thus the minimum of the two yields the combined regularity. This effect is analogously seen with the spectral fractional Laplacian.

\begin{prop}[Global boundary regularity]
\label{prop:reg-bdry-global}
Suppose 
$u\in C^{2s+}(\Omega)\cap C(\overline\Omega)$ solves
\begin{equation*}
\begin{cases}
\cL_\Omega u = f
    & \Textin \Omega,\\ 
u=0
    & \Textin \p\Omega. 
\end{cases}\end{equation*}
Then for any $\epsilon\in(0,1)$, there exists a constant $C=C(n,s,\Omega,\epsilon)>0$ such that
\[\begin{cases}
\norm[C^{0,1}(\overline\Omega)]{u}
\leq C\norm[L^\infty(\Omega)]{f}
    & \textfor s\in(\frac12,1),\\
\norm[C^{1-\eps}(\overline\Omega)]{u}
\leq C\norm[L^\infty(\Omega)]{f}
    & \textfor s=\frac12,\\
\norm[C^{2s}(\overline\Omega)]{u}
\leq C\norm[L^\infty(\Omega)]{f}
    & \textfor s\in(0,\frac12).\\
\end{cases}\]
\end{prop}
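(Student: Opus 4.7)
The plan is to combine the boundary Lipschitz decay $|u(x)|\leq C\norm[L^\infty(\Omega)]{f}\,d(x)$ furnished by \Cref{lem:BH-f-global} with a rescaled form of the interior estimate \Cref{lem:int-est}, and to patch them by the standard two-case argument comparing $r=|x-y|$ with $d(x)$.

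For each $x_0\in\Omega$ I would set $R=d(x_0)/4$ and consider $v(z)=u(x_0+Rz)$ on $U'=R^{-1}(\Omega-x_0)$, which by the quadratic scaling \Cref{lem:scaling} solves $\cL_{U'}v=\tilde f$ on $B_1\subset B_4\subset U'$ with $\norm[L^\infty(B_1)]{\tilde f}\leq R^2\norm[L^\infty(\Omega)]{f}$. The crucial point is that the bound $|u(y)|\leq C\norm[L^\infty(\Omega)]{f}(|y-x_0|+4R)$, together with a direct splitting of the tail integral at $|y-x_0|=R$, gives
\[
\norm[L^\infty(B_1)]{v}\leq CR\norm[L^\infty(\Omega)]{f}
\quad\textand\quad
\norm[L^1_{2s}(U')]{v}\leq C\kappa(R)\norm[L^\infty(\Omega)]{f},
\]
where $\kappa(R)=R$ for $s\in(\tfrac12,1)$, $\kappa(R)=R\abs{\log R}$ for $s=\tfrac12$, and $\kappa(R)=R^{2s}$ for $s\in(0,\tfrac12)$. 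Inserting these bounds in \Cref{lem:int-est} (and using $\norm[L^\infty(B_1)]{d_{U'}}\leq 5$) yields $\norm[C^{2s}(B_{1/2})]{v}\leq C\kappa(R)\norm[L^\infty(\Omega)]{f}$ in the two non-critical cases, and $\norm[C^{1-\eps}(B_{1/2})]{v}\leq C\kappa(R)\norm[L^\infty(\Omega)]{f}$ at $s=\tfrac12$.

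Rescaling back (and, when $s>\tfrac12$, extracting the $C^1$ part of the $C^{2s}$-norm) produces the uniform interior bound
\[
\seminorm[C^\sigma(B_{d(x_0)/8}(x_0))]{u}\leq C_\eps\norm[L^\infty(\Omega)]{f},
\qquad
\sigma=\begin{cases}1,&s\in(\tfrac12,1),\\ 1-\eps,&s=\tfrac12,\\ 2s,&s\in(0,\tfrac12),\end{cases}
\]
since the rescaling factors $R^{-1}\cdot R$, $R^{-(1-\eps)}\cdot R\abs{\log R}$, $R^{-2s}\cdot R^{2s}$ are all harmless in $R$. The patching is then standard: for $x,y\in\overline\Omega$ with $r=|x-y|$ and (without loss) $d(x)\leq d(y)$, if $r<d(x)/8$ then $y$ lies in the interior ball above and the seminorm gives $|u(x)-u(y)|\leq C_\eps\norm[L^\infty(\Omega)]{f}\,r^\sigma$; if $r\geq d(x)/8$ then $d(x)+d(y)\leq 17r$ and the boundary decay gives $|u(x)-u(y)|\leq C\norm[L^\infty(\Omega)]{f}\,r\leq C(\diam\Omega)^{1-\sigma}\norm[L^\infty(\Omega)]{f}\,r^\sigma$, using $\sigma\leq 1$. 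The matching $L^\infty$ bound on $u$ is provided by \Cref{lem:MP-for-int}.

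The main technical difficulty is the sharp $L^1_{2s}$-tail estimate for $v$. Without the Lipschitz decay one only obtains $\norm[L^1_{2s}(U')]{v}=O(1)$, in which case the rescaled interior seminorm blows up like $R^{-\sigma}$ as $R\to 0^+$ and the boundary/interior bounds can no longer be reconciled by a naive interpolation against $|u|\leq Cd$. The extra smallness $\kappa(R)$ exactly cancels this $R^{-\sigma}$; the logarithmic factor appearing at $s=\tfrac12$ is precisely what forces the arbitrary loss $\eps>0$ in the H\"{o}lder exponent in the critical case.
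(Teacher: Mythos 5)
Your proposal is correct and follows essentially the same route as the paper: the barrier-based boundary decay $|u|\leq C\norm[L^\infty]{f}\,d$ from \Cref{lem:BH-f-global}, a rescaled application of the interior estimate \Cref{lem:int-est} with the crucial case-by-case computation of the $L^1_{2s}$-tail yielding the extra smallness $\kappa(R)$, and the standard two-case patching against $|x-y|\lessgtr d(x)$. The only (welcome) cosmetic difference is that you scale by $R=d(x_0)/4$, which makes $B_4\subset R^{-1}(\Omega-x_0)$ manifest so that \Cref{lem:int-est} applies verbatim, whereas the paper scales by $\rho=d(x)$ and leaves this nesting implicit.
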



\begin{proof}
By dividing by $\norm[L^\infty(\Omega)]{f}$ if necessary, we can assume
$
\norm[L^\infty(\Omega)]{f}\leq 1.
$
By \Cref{lem:BH-f-global},
\begin{equation}\label{eq:BH-pointwise}
|u|\leq C
    d \quad \Textin \Omega.
\end{equation}
Let
\[
\beta=\begin{cases}
1 & \textfor s\in(\frac12,1),\\
1-\epsilon & \textfor s=\frac12,\\
2s & \textfor s\in(0,\frac12).
\end{cases}\]
We need to show that
\[
|u(x)-u(y)| \leq C
|x-y|^\beta
    \quad \forall x,y\in\overline{\Omega}.
\]
Write $\rho=\min\set{d(x),d(y)}=d(x)$, by interchanging $x$ and $y$ if necessary.
\begin{description}
\item[Case 1] $4|x-y|<\rho$.
Then $y\in B_{\rho/4}(x)\subset B_\rho(x)\subset \Omega$. By \Cref{lem:translation} and \Cref{lem:scaling}, the rescaled function $u_\rho(z)=u(x+\rho z)$ satisfies
\begin{equation}\label{eq:rescaled}
\cL_{\rho^{-1}(\Omega-x)}
    u_\rho(z)
=f_\rho(z)
:=\rho^2 f(x+\rho z)
    \quad \Textin B_{1/4}\subset B_1\subset \Omega.
\end{equation}
Using the interior estimates \Cref{lem:int-est}, we have
\[
\norm[C^{\beta}(B_{1/4})]{u_\rho}
\leq C\left(
        \norm[L^\infty(B_1)]{u_\rho}
        +\norm[L^1_{2s}(\Omega)]{u_\rho}
        +\norm[L^\infty(B_1)]{
        	d_{
            	\rho^{-1}(\Omega-x_0)
        	}
        }^{2-2s}
        \norm[L^\infty(B_1)]{f_\rho}
    \right)
\]
In view of \eqref{eq:BH-pointwise} we observe that
\[\begin{split}
\seminorm[C^{\beta}(B_{1/4})]{u_\rho}
&=
    \rho^\beta \seminorm[C^{\beta}(B_{\rho/4}(x))]{u}\\
\norm[L^\infty(B_1)]{u_\rho}
&\leq C
    \norm[L^\infty(B_\rho(x))]{d}
\leq C
    \rho
\leq C\rho^\beta\\
\end{split}\]
\[\begin{split}
\norm[L^1_{2s}(\Omega)]{u_\rho}
= \norm[L^1_{2s}(\Omega)]{u(x+\rho\cdot)}
&\leq C\norm[L^1_{2s}(\Omega)]{d(x+\rho\cdot)}\\
&\leq
Cd(x)\norm[L^1_{2s}(\Omega)]{1}
+C\rho\norm[L^1_{2s-1}(\Omega)]{1}\\
&\leq C\rho\left(
        1+\int_{
            1\leq |z|
            \leq \rho^{-1} \diam\Omega
        }
            \dfrac{1}{|z|^{n+2s-1}}
        \,dz
    \right)\\
&\leq
\begin{cases}
C\rho
    & \textfor s\in(\frac12,1),\\
C\rho(1+\log\frac{1}{\rho})
    & \textfor s=\frac12,\\
C\rho(1+\rho^{2s-1})
    & \textfor s\in(0,\frac12),\\
\end{cases}\\
&\leq C\rho^\beta.
\end{split}\]
\[\begin{split}
\norm[L^\infty(B_1)]{
        	d_{
            	\rho^{-1}(\Omega-x_0)
        	}
        }^{2-2s}
        \norm[L^\infty(B_1)]{f_\rho}
&\leq
    \norm[L^\infty(\Omega)]{d}^{2-2s}\rho^{2s-2}
    \cdot \rho^2
\leq C
    \rho^{2s}
\leq C
    \rho^\beta.
\end{split}\]
We conclude that
\[
\seminorm[C^{\beta}(B_{\rho/4}(x))]{u}
\leq C
    \quad \textit{ i.e. } \quad
|u(x)-u(y)|\leq
C
|x-y|
    \quad \textfor |x-y|<\frac{\rho}{4}.
\]
\item[Case 2] $|x-y|\geq \frac{\rho}{4}$.
Then
\[\begin{split}
|u(x)-u(y)|
\leq |u(x)|+|u(y)|
&\leq C
    \left(d(x)+d(y)\right)\\
&\leq C
    (2d(x)+|x-y|)\\
&\leq C
    |x-y|
\leq C|x-y|^\beta.
\qedhere
\end{split}\]
\end{description}
\end{proof}


\begin{prop}[Local boundary regularity]
\label{prop:reg-bdry-local}
Suppose 
$u\in C^{2s+\eps}_{\loc}(B_1^+)\cap C(\overline{B_2^+})$ solves
\begin{equation*}
\begin{cases}
\cL_{\R^n_+}u = 0
    & \Textin B_1^+,\\
u=0
    & \Textin \p\R^n_+\cap B_2. 
\end{cases}\end{equation*}
Then for any $\epsilon\in(0,1)$, there exists a constant $C=C(n,s,\epsilon)>0$ such that
\[\begin{cases}
\norm[{C^{0,1}(\overline{B_{1/16}^+})}]{u}
\leq C\norm[L^\infty(B_2^+)]{u}
    & \textfor s\in(\frac12,1),\\
\norm[C^{1-\epsilon}(\overline{B_{1/16}^+})]{u}
\leq C\norm[L^\infty(B_2^+)]{u}
    & \textfor s=\frac12,\\
\norm[C^{2s}(\overline{B_{1/16}^+})]{u}
\leq C\norm[L^\infty(B_2^+)]{u}
    & \textfor s\in(0,\frac12).\\
\end{cases}\]
\end{prop}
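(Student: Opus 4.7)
The plan is to mirror the proof of \Cref{prop:reg-bdry-global}, substituting its global ingredients by local analogues on $B_2^+$. By linearity we may normalize $\norm[L^\infty(B_2^+)]{u}=1$. \Cref{lem:BH-f-local} immediately provides the analogue of \eqref{eq:BH-pointwise}, namely
\[
|u(x)| \leq C x_n \quad \textfor x\in B_{1/2}^+.
\]

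Fix $x,y \in B_{1/16}^+$ and, after relabeling, set $\rho = d(x)\wedge d(y) = x_n$. I would split into two cases as in \Cref{prop:reg-bdry-global}. In the interior case $4|x-y|<\rho$, the ball $B_\rho(x)$ lies inside $\R^n_+$, and via \Cref{lem:translation} and \Cref{lem:scaling} the rescaled function $u_\sigma(z) = u(x+\sigma z)$, with $\sigma = \rho/5$ chosen so that the inclusion $B_4 \subset U_\sigma := \sigma^{-1}(\R^n_+ - x)$ required by \Cref{lem:int-est} holds, is $\cL_{U_\sigma}$-harmonic on $B_1$. Applying \Cref{lem:int-est} and undoing the scaling yields
\[
\seminorm[C^\beta(B_{\sigma/2}(x))]{u} \leq C\sigma^{-\beta}\bigl(
    \norm[L^\infty(B_1)]{u_\sigma}
    + \norm[L^1_{2s}(U_\sigma)]{u_\sigma}
\bigr),
\]
where $\beta=1$ for $s\in(\tfrac12,1)$, $\beta=1-\epsilon$ for $s=\tfrac12$, and $\beta=2s$ for $s\in(0,\tfrac12)$. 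The first term is controlled by $C\rho\leq C\rho^\beta$ via the pointwise boundary bound. For the second, extend $u$ by zero outside $B_2^+$ and split the tail according to $B_{1/2}^+$ (where $|u|\leq Cy_n$) versus $B_2^+\setminus B_{1/2}^+$ (where $|u|\leq 1$), proceeding term-by-term as in the interior case of \Cref{prop:reg-bdry-global}. In the boundary case $|x-y|\geq\rho/4$, the pointwise bound alone suffices: $|u(x)-u(y)|\leq C(x_n+y_n)\leq C|x-y|\leq C|x-y|^\beta$.

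The main obstacle, absent in the global proof, is the treatment of the $L^1_{2s}$ tail of $u_\sigma$ on the unbounded rescaled half-space: the pointwise boundary bound is available only on $B_{1/2}^+$, whereas on $B_2^+\setminus B_{1/2}^+$ one has only $|u|\leq 1$. A change of variables back to the original coordinates shows that this tail contributes at most $C\rho$ from the near field (using $|u|\leq Cw_n$) and $C\rho^{2s}$ from the far field (using the decay $|w-x|^{-n-2s}$ of the kernel against $|u|\leq 1$). The sum $C\rho+C\rho^{2s}$ is dominated by $C\rho^\beta$ in each of the three regimes of $s$, which closes the estimate. A minor bookkeeping point is the scale adjustment $\sigma=\rho/5$ required to fit the hypothesis $B_4\subset U$ of \Cref{lem:int-est}; this only introduces a universal constant. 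Combining the two cases gives the claimed H\"older norm on $\overline{B_{1/16}^+}$.
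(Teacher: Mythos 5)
Your proposal is correct and follows essentially the same route as the paper: normalize, invoke \Cref{lem:BH-f-local} for the pointwise barrier bound $|u|\leq Cx_n$, split into an interior case (rescale and apply \Cref{lem:int-est}) and a boundary case (pointwise bound alone). Two small points where you are more careful than the paper's own write-up, both worth noting. First, the paper asserts $B_1\subset B_4\subset \rho^{-1}(B_1^+-x)$ for the rescaling $u_\rho(z)=u(x+\rho z)$ with $\rho=x_n$, but since $\rho^{-1}(\R^n_+-x)=\{z_n>-1\}$ this inclusion fails; your adjustment to $\sigma=\rho/5$ restores the hypothesis of \Cref{lem:int-est} at the cost of a harmless change of constants (one must then either adjust the threshold in the case split from $\rho/4$ to $\rho/10$ or chain/cover, but this is routine). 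Second, the paper feeds the linear bound $|u|\leq Cx_n$ into the $L^1_{2s}$ tail over all of $\rho^{-1}(B_2^+-x)$, although that bound is only established on $B_{1/2}^+$; your explicit near-field/far-field split, bounding the far field by $\norm[L^\infty]{u}$ times the kernel decay to get $O(\rho^{2s})$, repairs this cleanly. (Your near-field label of $C\rho$ is a slight over-simplification — for $s\leq\tfrac12$ the near-field contribution is actually of order $\rho\log(1/\rho)$ or $\rho^{2s}$ — but in every regime the total is still $\leq C\rho^\beta$, so the conclusion stands.)
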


\begin{proof}
By normalizing if necessary, we assume $\norm[L^\infty(B_2^+)]{u}\leq 1$.
Let
\[
\beta=\begin{cases}
1 & \textfor s\in(\frac12,1),\\
1-\epsilon & \textfor s=\frac12,\\
2s & \textfor s\in(0,\frac12).
\end{cases}\]
We need to show that
\[
|u(x)-u(y)|
\leq C
|x-y|^\beta
    \quad \forall x,y\in \overline{B_{1/16}^+}. 
\]
Without loss of generality let $\rho=x_n\leq y_n$. By \Cref{lem:BH-f-local} we have
\begin{equation}
\label{eq:int-use-BH-half}
|u|
\leq 
	C
	x_n
    	\quad \Textin \overline{B_{1/2}^+}.
\end{equation}

\medskip
\noindent{\bf Case 1: $4|x-y|<\rho$.}
Then $y\in B_{\rho/4}(x)\subset B_\rho(x)\subset B_1^+$. As in the proof of \Cref{prop:reg-bdry-global}, the rescaled function $u_\rho(z)=u(x+\rho z)$ they satisfy the equation (note $\rho^{-1}\geq 16$)
\[
\cL_{\rho^{-1}(B_2^+-x)}u_\rho=0
    \quad\Textin B_1\subset B_4\subset \rho^{-1}(B_1^+-x).
\]
By \eqref{eq:int-use-BH-half},
\[
|u_\rho(z)|
\leq C
	(x_n+\rho z_n)
	\quad \Textin \rho^{-1}(B_{1/2}^+-x).
\]
%
From \Cref{lem:int-est} we have the estimate
\[
\norm[C^{\beta}(B_{1/4})]{u_\rho}
\leq C\left(
        \norm[L^\infty(B_1)]{u_\rho}
        +\norm[L^1_{2s}(\rho^{-1}(B_1^+-x))]{u_\rho}
    \right).
\]
Therefore, by \eqref{eq:int-use-BH-half},
\[\begin{split}
\rho^{\beta}
    \seminorm[C^{\beta}(B_{\rho/4}(x))]{u}
&\leq C\left(
        \norm[L^\infty(B_\rho(x))]{u}
        +\norm[L^1_{2s}(\rho^{-1}(B_2^+-x))]{x_n+\rho z_n}
    \right)\\
&\leq C\left(
        x_n
        +x_n\norm[L^1_{2s}(\R^{n-1})]{1}
        +\rho\norm[L^1_{2s-1}(\rho^{-1}B_4)]{1}
    \right)\\
&\leq C\rho^\beta.
\end{split}\]

\medskip
\noindent{\bf Case 2: $4|x-y|\geq \rho$.} Then by \eqref{eq:int-use-BH-half},
\[\begin{split}
|u(x)-u(y)|
\leq x_n+y_n
\leq 2\rho+|y_n-x_n|
\leq 9|x-y|
\leq C|x-y|^\beta.
\qedhere
\end{split}\]
\end{proof}

\normalcolor

\section{Liouville-type results}


In this section we classify solutions to homoegenous Dirichlet problems in a half space with controlled growth. Write $\R^n_+=\set{x=(x',x_n)\in\R^{n-1}\times \R_+}$.


\begin{prop}[Liouville-type result]
\label{prop:Liouville}
Let $v$ be a solution to
\begin{equation}\label{eq:v-in-H}\begin{cases}
\cL_{\R^n_+}v=0
    & \Textin \R^n_+,\\
v=0
    & \texton \p\R^n_+,
\end{cases}\end{equation}
which satisfies the growth condition
\begin{equation}\label{eq:v-growth}
|v(x)|\leq C(1+|x|^{1+\alpha}),
\end{equation}
for some $\alpha\in(0,\alpha_*)$ with $\alpha_*\in(0,2s\wedge 1)$ given in \Cref{prop:BH}. Then $v$ is a 1D and linear, i.e.
\[
v(x)=b_0x_n,
\]
for some constant $b_0\in \R$.
\end{prop}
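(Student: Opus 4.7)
The plan is to reduce $v$ to a one-dimensional function of $x_n$ and then invoke \Cref{lem:Liouville-1D}. For a fixed tangential vector $h\in\R^{n-1}\times\{0\}$, set $w_h(x):=v(x+h)-v(x)$. By linearity and tangential translation invariance, $w_h$ solves \eqref{eq:v-in-H} as well. The heart of the matter will be to show that $w_h$ enjoys strictly sublinear growth, which the local boundary Harnack principle then forces to zero.

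To quantify the increments of $v$, I rescale: let $v_R(x):=R^{-(1+\alpha)}v(Rx)$. By \Cref{lem:scaling}, $v_R$ again solves \eqref{eq:v-in-H}, and the growth \eqref{eq:v-growth} yields $\|v_R\|_{L^\infty(B_2^+)}\leq C$ uniformly in $R\geq 1$. \Cref{prop:reg-bdry-local} then produces a uniform estimate $[v_R]_{C^\beta(\overline{B_{1/16}^+})}\leq C$, with $\beta=1$ when $s\in(\tfrac12,1)$, $\beta=1-\eps$ when $s=\tfrac12$ (for any $\eps<1-\alpha$), and $\beta=2s$ when $s\in(0,\tfrac12)$. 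Undoing the rescaling with the choice $R\sim |x|+|h|$ gives the anisotropic bound
\[
|w_h(x)|\leq C(|x|+|h|)^{1+\alpha-\beta}|h|^\beta,
\qquad x\in\R^n_+.
\]
Since $\alpha<\alpha_*<2s\wedge 1$, the exponent $\gamma:=1+\alpha-\beta$ is strictly less than $1$ in all three regimes. This is precisely where the hypothesis $\alpha<1$ is indispensable: were $\alpha=1$ admitted, the quadratic solutions $x_n(x'\cdot e)$, which lie in the kernel of $\cL_{\R^n_+}$, would no longer be excluded by the growth assumption.

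Next, I apply \Cref{lem:BH-f-local} to the scaled increment $\tilde w_R(x):=w_h(Rx)$, which solves the same homogeneous problem by \Cref{lem:scaling}. It gives
\[
\|\tilde w_R/x_n\|_{L^\infty(B_{1/2}^+)}\leq C\|\tilde w_R\|_{L^\infty(B_2^+)}\leq C_h R^\gamma.
\]
Changing variables back to $y=Rx\in B_{R/2}^+$ yields $|w_h(y)|\leq C_h R^{\gamma-1}y_n$. Holding $y$ fixed and sending $R\to\infty$ collapses the right-hand side to $0$ because $\gamma<1$, so $w_h\equiv 0$. Since $h$ was arbitrary tangential, $v$ is independent of $x'$; that is, $v(x)=u(x_n)$ for some $u\in C^{2s+}(\R_+)\cap C([0,\infty))$.

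Finally, $u$ solves $\cL_{\R^n_+}u=0$ in $\R_+$ with $u(0)=0$ and $|u(x_n)|\leq C(1+x_n^{1+\alpha})$, so \Cref{lem:Liouville-1D} forces $u(x_n)=b_0 x_n$, completing the proof. The principal obstacle is the sublinear-growth estimate for $w_h$: it crucially combines the sharpened boundary regularity of \Cref{prop:reg-bdry-local} (giving $\beta$ as large as the operator allows) with the hypothesis $\alpha<1$ to open the gap $\beta>\alpha$, without which the reduction to 1D would break down.
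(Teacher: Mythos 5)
Your proof is correct, and it takes a genuinely different and, in my view, more streamlined route than the paper's own argument. The paper proceeds by induction on the growth exponent in increments of $\beta_0$: it first establishes a Liouville result for solutions with growth $< \beta_0$ (\Cref{lem:slow-growth}) by sending the rescaled $C^{\beta_0}$ seminorm to zero, then treats growth $\in [\beta_0,2\beta_0)$ (\Cref{lem:Liouville-mild}) via normalized H\"{o}lder difference quotients $\frac{v(\cdot+h\omega)-v(\cdot)}{h^{\beta_0}}$, and finally handles the full range $1+\alpha<1+\alpha_*$ by an inductive bootstrap in which the quadratic build-up $v(x+R\omega)-v(x)=b_0(1,\omega)Rx_n$ eventually violates the growth bound. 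This multi-step scheme is genuinely necessary in their framework when $s<\tfrac12$, since $\beta_0=2s$ can be much smaller than $1+\alpha$. You bypass the induction entirely by working with the \emph{unnormalized} tangential increment $w_h$, extracting the anisotropic estimate $|w_h(x)|\lesssim(|x|+|h|)^{1+\alpha-\beta_0}|h|^{\beta_0}$ from a single rescaling of \Cref{prop:reg-bdry-local}, and then invoking \Cref{lem:BH-f-local} on dyadic balls. The crucial observation you exploit is that the boundary Harnack estimate $\norm[L^\infty(B_{1/2}^+)]{u/x_n}\leq C\norm[L^\infty(B_2^+)]{u}$ gains a full factor $R^{-1}$ under the rescaling $\tilde w_R(x)=w_h(Rx)$, so it kills \emph{any} sublinear growth $\gamma<1$ — a strictly stronger Liouville mechanism than the paper's \Cref{lem:slow-growth}, which only kills growth $<\beta_0$. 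Since $\alpha<\alpha_*<2s\wedge 1\leq\beta_0$ (choosing $\eps<1-\alpha$ when $s=\tfrac12$), the exponent $\gamma=1+\alpha-\beta_0<1$ in all three regimes, and one round of differencing already suffices. Your identification of $\alpha<1$ as the decisive hypothesis is accurate and aligns with the paper's own remark about the obstruction posed by $x_n(x'\cdot e)$, which is annihilated by $\cL_{\R^n_+}$ by odd symmetry.
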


\begin{lem}[Liouville in 1D]
\label{lem:Liouville-1D}
If $\bar{v}$ solves
\[\begin{cases}
\cL_{\R_+}\bar{v}=0
    & \Textin (0,+\infty)\\
\bar{v}(0)=0,
\end{cases}\]
and satisfies the growth condition
\[
|\bar{v}(x)|
\leq C(1+|x|^{1+\alpha}),
\]
where $\alpha\in(0,\alpha_*)$ with $\alpha_*\in(0,2s\wedge 1)$ given in \Cref{prop:BH}. Then
\[
\bar{v}(x)=c_0 x,
\]
for some $c_0\in\R$.
\end{lem}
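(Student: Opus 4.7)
The plan is to run a blow-down / scaling argument: first establish that $\bar v(x)/x$ is H\"older continuous up to $0$, giving a limit $c_0$, and then use the scaling invariance of $\cL_{\R_+}$ together with the subcritical growth hypothesis $\alpha<\alpha_*$ to conclude $\bar v(z)=c_0 z$ globally.

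\emph{Step 1 (pointwise bound on $\bar v/x$).} I would apply the local boundary Harnack estimate \Cref{lem:BH-f-local}, which crucially requires no sign condition, to $\bar v$ on $(0,2)$:
\[
\left\|\frac{\bar v}{x}\right\|_{L^\infty((0,1/2))}\leq C\,\|\bar v\|_{L^\infty((0,2))}.
\]
By the growth bound $|\bar v(x)|\leq C(1+|x|^{1+\alpha})$, the right-hand side is controlled by a universal constant, so $\bar v/x$ is uniformly bounded on $(0,1/2)$.

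\emph{Step 2 (H\"older decay of $\bar v/x$ at $0$).} Define $m_k=\inf_{(0,4^{-k})}\bar v/x$ and $M_k=\sup_{(0,4^{-k})}\bar v/x$, both finite by Step~1. I would observe that the proof of \Cref{lem:BH-osc} extends verbatim to sign-changing $\bar v$: its positivity hypothesis is only used to guarantee that the shifted functions $\bar v-m_kx$ and $M_kx-\bar v$ are non-negative on $(0,4^{-k})$, which is automatic from the definitions of $m_k,M_k$ (even when $m_k<0$). Their rescalings by $4^{-k-1}$ are then non-negative on $(0,4)$, hence strictly positive on $(0,2)$ by the strong maximum principle \Cref{lem:MP-1D} (applied with scaling to $(0,2)$), so the 1D boundary Harnack \Cref{lem:BH-quotient} on $(0,1)$ yields the improvement of oscillation $M_{k+1}-m_{k+1}\leq c(M_k-m_k)$ with a universal $c\in(0,1)$. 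Iterating, with $\alpha_*=-\log c/\log 4$ as in \Cref{prop:BH}, the limit $c_0:=\lim_{x\to 0^+}\bar v(x)/x$ exists and
\[
\left|\frac{\bar v(x)}{x}-c_0\right|\leq C\,x^{\alpha_*},\qquad x\in(0,1/2).
\]

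\emph{Step 3 (transfer to all scales and conclude).} For $R\geq 1$, set $\bar v_R(x):=R^{-(1+\alpha)}\bar v(Rx)$. By the scaling lemma, $\bar v_R$ is $\cL_{\R_+}$-harmonic with $\bar v_R(0)=0$ and $\|\bar v_R\|_{L^\infty((0,2))}\leq C(R^{-(1+\alpha)}+2^{1+\alpha})\leq C$ uniformly in $R$. Applying Steps~1--2 to $\bar v_R$, and noting that its slope at $0$ is $c_0^{(R)}=R^{-\alpha}c_0$ (immediate from $\bar v_R(x)/x=R^{-\alpha}\bar v(Rx)/(Rx)$), an unwinding of the change of variables gives
\[
\left|\frac{\bar v(z)}{z}-c_0\right|\leq C\,R^{\alpha-\alpha_*}\,z^{\alpha_*},\qquad z\in(0,R/2).
\]
For each fixed $z>0$, letting $R\to\infty$ along $R>2z$ and using $\alpha<\alpha_*$ forces $\bar v(z)=c_0 z$.

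The main obstacle I anticipate lies in Step~2: carefully verifying that each invocation of \Cref{lem:MP-1D} and \Cref{lem:BH-quotient} in the proof of \Cref{lem:BH-osc} remains valid for sign-changing $\bar v$, and in particular tracking the rescaling by $4^{-k-1}$ so that the shifted functions $\bar v-m_kx$ and $M_kx-\bar v$ are non-negative on a domain large enough, namely $(0,4)$ in the rescaled variable, to pass through the strong maximum principle on $(0,2)$ and the boundary Harnack on $(0,1)$.
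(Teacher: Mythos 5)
Your proof is correct and follows the same core blow-down strategy as the paper: rescale by a subcritical power of $R$, invoke the 1D boundary H\"older estimate of \Cref{prop:BH}, and let $R\to\infty$ using $\alpha<\alpha_*$ to kill the error. The paper is slightly more economical: it normalizes by $R^{-1-\alpha_*}$ (so $\norm[L^\infty(0,2)]{\bar{v}_R}\leq CR^{-(\alpha_*-\alpha)}\to 0$) and observes directly, via the scaling identity $\seminorm[C^{\alpha_*}(0,R)]{\bar{v}/x}=\seminorm[C^{\alpha_*}(0,1)]{\bar{v}_R/x}$, that the seminorm vanishes on every scale, hence $\bar{v}/x$ is constant -- no need to extract the limiting slope $c_0$ or the pointwise decay $|\bar{v}(x)/x-c_0|\leq Cx^{\alpha_*}$ and then unwind. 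Your normalization $R^{-(1+\alpha)}$ keeps the $L^\infty$ norm bounded rather than decaying, and then recovers the decay through the slope $c_0^{(R)}=R^{-\alpha}c_0$; both routes are valid. One thing you did more carefully than the paper: \Cref{prop:BH}, \Cref{lem:BH-quotient}, and \Cref{lem:BH-osc} are stated under the hypothesis $u>0$, while the rescaled $\bar{v}_R$ may change sign; your Step~2 verification that the improvement-of-oscillation argument goes through for sign-changing $\bar{v}$ (since $\bar{v}-m_k x\geq 0$ and $M_k x-\bar{v}\geq 0$ on $(0,4^{-k})$ by definition of $m_k,M_k$, after which \Cref{lem:MP-1D} and \Cref{lem:BH-quotient} apply to these nonnegative differences) is exactly the point the paper uses implicitly when it writes $\seminorm[C^{\alpha_*}(0,1)]{\bar{v}_R/x}\leq C\norm[L^\infty(0,2)]{\bar{v}_R}$ without a sign hypothesis, and your explicit check is a genuine improvement in rigor.
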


\begin{proof}
Let
\[
\bar{v}_R(x)=R^{-1-\alpha_*}\bar{v}(R x),
\]
which satisfies the growth condition
\[
|\bar{v}_R(x)|
\leq CR^{-1-\alpha_*}(1+R^{1+\alpha}|x|^{1+\alpha})
\leq CR^{-(\alpha_*-\alpha)}(1+|x|^{1+\alpha}).
\]
In particular,
\[
\norm[L^\infty(0,2)]{\bar{v}_R}
\leq CR^{-(\alpha_*-\alpha)}.
\]
Applying \Cref{prop:BH} to $\bar{v}_R$, we see that
\[
\seminorm[C^{\alpha_*}(0,R)]
	{\dfrac{\bar{v}}{x}}
=\seminorm[C^{\alpha_*}(0,1)]
	{\dfrac{\bar{v}_R}{x}}
\leq C\norm[L^\infty(0,2)]{\bar{v}_R}
\leq CR^{-(\alpha_*-\alpha)}
\to 0,
\]
as $R\to+\infty$. Hence $\bar{v}/x$ is a constant $c_0\in\R$.
\end{proof}

\begin{lem}[Solutions with slow growth vanish]
\label{lem:slow-growth}
Suppose $v$ solves \eqref{eq:v-in-H} with
\[
|v(x)|\leq C(1+|x|^{\beta}),
\]
for $\beta\in[0,\beta_0)$ where
\begin{equation}\label{eq:beta0}
\beta_0=\begin{cases}
1
	& \textfor s\in(1/2,1),\\
1-\eps
	& \textfor s=1/2,\\
2s
	& \textfor s\in(0,1/2).
\end{cases}\end{equation}
Then $v\equiv 0$.
\end{lem}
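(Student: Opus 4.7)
The plan is to apply the local boundary regularity \Cref{prop:reg-bdry-local} to rescalings of $v$ at arbitrarily large scales, so that the relevant H\"older seminorm vanishes on every half-ball, forcing $v$ to be constant. The boundary condition then yields $v\equiv 0$.

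First I fix an exponent $\beta'\in(\beta,\beta_0)$: for $s\in(\frac12,1)$ take $\beta'=1$, for $s=\frac12$ choose $\eps'\in(0,1-\beta)$ and take $\beta'=1-\eps'$, and for $s\in(0,\frac12)$ take $\beta'=2s$. In each case \Cref{prop:reg-bdry-local} delivers a local $C^{\beta'}$ estimate up to the boundary. For $R>1$ set
\[
v_R(x)=R^{-\beta'}v(Rx),
    \quad x\in\R^n_+.
\]
By the scaling \Cref{lem:scaling} (applied with $x=0$, $\rho=R$, $\Omega=\R^n_+$, noting $R^{-1}\R^n_+=\R^n_+$) together with linearity, $v_R$ solves $\cL_{\R^n_+}v_R=0$ in $\R^n_+$ with $v_R=0$ on $\p\R^n_+$. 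From the hypothesis $|v(x)|\leq C(1+|x|^{\beta})$,
\[
\norm[L^\infty(B_2^+)]{v_R}
\leq CR^{-\beta'}\bigl(1+(2R)^{\beta}\bigr)
\leq C\bigl(R^{-\beta'}+R^{\beta-\beta'}\bigr)
\to 0
    \textas R\to+\infty.
\]

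Next I apply \Cref{prop:reg-bdry-local} to $v_R$ and combine with the scaling identity $\seminorm[C^{\beta'}(B_{1/16}^+)]{v_R}=\seminorm[C^{\beta'}(B_{R/16}^+)]{v}$ (a direct change of variables) to obtain
\[
\seminorm[C^{\beta'}(B_{R/16}^+)]{v}
\leq C\norm[L^\infty(B_2^+)]{v_R}
\to 0
    \textas R\to+\infty.
\]
Fixing any $R_0>0$, monotonicity of the seminorm in its domain gives $\seminorm[C^{\beta'}(B_{R_0/16}^+)]{v}\leq \seminorm[C^{\beta'}(B_{R/16}^+)]{v}$ for all $R\geq R_0$, so $\seminorm[C^{\beta'}(B_{R_0/16}^+)]{v}=0$. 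Since $R_0$ is arbitrary and $\beta'>0$, $v$ is constant on the connected set $\R^n_+$; the boundary condition $v=0$ on $\p\R^n_+$ forces $v\equiv 0$.

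The only delicate step is the pairing of exponents: the rescaling factor $R^{-\beta'}$ must dominate the growth $R^{\beta}$ (requiring $\beta'>\beta$) while $\beta'$ must be attainable by \Cref{prop:reg-bdry-local} (requiring $\beta'\leq\beta_0$). The hypothesis $\beta<\beta_0$ makes such a choice available, and the case distinction on $s$ mirrors exactly the one in \Cref{prop:reg-bdry-local}.
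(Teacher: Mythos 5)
Your proof is correct and follows essentially the same route as the paper: rescale $v$ at scale $R$, use \Cref{prop:reg-bdry-local} to bound the $C^{\beta'}$ seminorm of $v$ on $B_{R/16}^+$ by the (vanishing) $L^\infty$ norm of the rescaled function, then let $R\to\infty$ and use the boundary condition. The only cosmetic difference is the introduction of the auxiliary exponent $\beta'$; the paper simply takes $\beta'=\beta_0$ throughout (which your own choices reduce to in the cases $s\neq\tfrac12$), so your extra flexibility is harmless but unnecessary, and your opening phrase ``$\beta'\in(\beta,\beta_0)$'' should read ``$\beta'\in(\beta,\beta_0]$'' to match the endpoints you actually use.
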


\begin{proof}
The rescaled function $v_R(x)=R^{-\beta_0}v(Rx)$ satisfies \eqref{eq:v-in-H} and the growth condition
\[
|v_R(x)|
\leq CR^{-\beta_0}(1+R^\beta |x|^\beta)
\leq CR^{-(\beta_0-\beta)}(1+|x|^\beta).
\]
By \Cref{prop:reg-bdry-local},
\[
\seminorm[C^{\beta_0}(B_{R/16}^+)]{v}
=\seminorm[C^{\beta_0}(B_{1/16}^+)]{v_R}
\leq C\norm[L^\infty(B_2^+)]{v_R}
\leq CR^{-(\beta_0-\beta)}
\to 0,
\]
as $R\to\infty$. Hence, $v\equiv v(0)=0$.
\end{proof}

\begin{lem}[Solutions with mild growth are 1D]
\label{lem:Liouville-mild}
Suppose $v$ satisfies \eqref{eq:v-in-H} and the growth condition
\[
|v(x)|\leq C(1+|x|^{\beta})
\]
for $\beta\in[\beta_0,2\beta_0)\cap (0,1+\alpha_*)$ where $\beta_0$ is as in \eqref{eq:beta0} and $\alpha_* \in (0,2s\wedge 1)$ is given in \Cref{prop:BH}. Then $v$ is a 1D, i.e.
\[
v(x)=b_0 x_n,
\]
for some $c_0\in\R$. 
\end{lem}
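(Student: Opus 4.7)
My plan is to reduce to the one-dimensional setting by exploiting the tangential translation invariance of the operator, and then invoke \Cref{lem:Liouville-1D}. For $h>0$ and a tangential unit vector $e_i$ with $1\leq i\leq n-1$, set
\[
w(x)=v(x+he_i)-v(x).
\]
Because $\R^n_+$ is invariant under translations in the $e_i$ direction and the kernel of $\cL_{\R^n_+}$ only depends on $|y|$, the operator commutes with such translations; by linearity $\cL_{\R^n_+}w=0$ in $\R^n_+$ and $w=0$ on $\p\R^n_+$.

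The heart of the argument is to control the growth of $w$. Using the quadratic scaling of $\cL_{\R^n_+}$ on the scale-invariant domain $\R^n_+$, the rescaled function $v_R(x)=R^{-\beta}v(Rx)$ solves the same homogeneous Dirichlet problem and is uniformly bounded on $B_2^+$: $\norm[L^\infty(B_2^+)]{v_R}\leq CR^{-\beta}(1+(2R)^\beta)\leq C$ for $R\geq 1$. Applying \Cref{prop:reg-bdry-local} to $v_R$ gives $[v_R]_{C^{\beta_0}(\overline{B_{1/16}^+})}\leq C$ uniformly in $R$, which rescales to
\[
[v]_{C^{\beta_0}(\overline{B_{R/16}^+})}\leq CR^{\beta-\beta_0}.
\]
Taking $R=32(|x|+h)$ so that both $x$ and $x+he_i$ lie in $B_{R/16}^+$, we obtain $|w(x)|\leq Ch^{\beta_0}(1+|x|)^{\beta-\beta_0}$. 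Since $\beta<2\beta_0$, the exponent $\beta-\beta_0$ is strictly less than $\beta_0$, and \Cref{lem:slow-growth} (applied to $w$, for which the growth hypothesis $|w(x)|\leq C_h(1+|x|^{\beta-\beta_0})$ is satisfied with $\beta-\beta_0<\beta_0$) forces $w\equiv 0$.

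Since $h>0$ and the tangential direction $e_i$ were arbitrary, $v$ is invariant under every tangential translation, hence depends only on $x_n$; write $v(x)=\bar v(x_n)$. Then $\bar v$ solves $\cL_{\R_+}\bar v=0$ on $\R_+$ with $\bar v(0)=0$ and $|\bar v(x_n)|\leq C(1+x_n^{\beta})$. Because $\beta<1+\alpha_*$, we may choose $\alpha\in(0,\alpha_*)$ with $1+\alpha>\beta$, so the growth hypothesis of \Cref{lem:Liouville-1D} is met; it yields $\bar v(x_n)=b_0 x_n$ for some $b_0\in\R$, and hence $v(x)=b_0 x_n$. The main obstacle is the growth reduction step, which relies on the precise interplay between the quadratic scaling of $\cL_{\R^n_+}$, the $C^{\beta_0}$ boundary regularity, and the sharp threshold $\beta<2\beta_0$ that allows one differentiation to drop the growth below the slow-growth regime.
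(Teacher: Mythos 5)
Your proof is correct and follows essentially the same route as the paper: form the tangential increment $w(x)=v(x+he_i)-v(x)$, use the rescaled $C^{\beta_0}$ boundary estimate from \Cref{prop:reg-bdry-local} to bound $w$ by $Ch^{\beta_0}(1+|x|)^{\beta-\beta_0}$, invoke \Cref{lem:slow-growth} to get $w\equiv 0$, conclude $v$ is one-dimensional, and finish with \Cref{lem:Liouville-1D}. The only cosmetic differences from the paper's argument are that the paper normalizes $w$ by $h^{\beta_0}$ and allows arbitrary tangential directions $\omega\in\bS^{n-1}\cap\{x_n=0\}$ rather than coordinate directions, neither of which is substantive.
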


\begin{proof}
Let $h\in(0,1]$ and $\omega\in \bS^{n-1}\cap \set{x_n=0}$. Write
\[
w(x)
=\dfrac{
	v(x+h\omega)-v(x)
}{
	h^{\beta_0}
},
\]
which satisfies \eqref{eq:v-in-H} and the growth condition (via the rescaling as in \Cref{lem:slow-growth})
\[
\norm[L^\infty(B_{R/32})]{w}
\leq \seminorm[C^{\beta_0}(B_{R/16})]{v}
\leq CR^{-\beta_0}\norm[L^\infty(B_{2R})]{v}
\leq R^{\beta-\beta_0}.
\]
Since $\beta-\beta_0\in[0,\beta_0)$, \Cref{lem:slow-growth} implies $w\equiv0$. Then $v(x+h\omega)=v(x)$ for any $x\in\R^n_+$, $h\in(0,1]$, $\omega\in \bS^{n-1}\cap \set{x_n=0}$. Since $(h\omega)\mathbb{Z}$ is arbitrary on $\set{x_n=0}$, $v$ depends only on $x_n$. By \Cref{lem:Liouville-1D}, $v(x)=b_0x_n$ for some $b_0\in\R$.
\end{proof}

\begin{proof}[Proof of \Cref{prop:Liouville}]
%
%
We will prove by induction in $k$ the following claim: if $v$ satisfies \eqref{eq:v-in-H} and the growth condition
\begin{equation}\label{eq:Liouville-k}
|v(x)|\leq C(1+|x|^{k\beta_0})
\end{equation}
and $k\beta_0<1+\alpha_*$, then $v$ is 1D and linear.

By \Cref{lem:Liouville-mild}, this is true for $k=1$. Suppose the claim is true for $k$ and $v$ is a solution to \eqref{eq:v-in-H} satisfying
\[
|v(x)|\leq C(1+|x|^{(k+1)\beta_0}).
\]
By the rescaling argument and boundary regularity (e.g. in \Cref{lem:Liouville-mild}), the H\"{o}lder difference quotient $\frac{v(x+h\omega)-v(x)}{h^{\beta_0}}$ satisfies \eqref{eq:v-in-H} and \eqref{eq:Liouville-k}. Hence, there exists $b_0(h,\omega)$ such that
\begin{equation}\label{eq:v-periodic}
v(x+h\omega)-v(x)=b_0(h,\omega)x_n.
\end{equation}
By iterating \eqref{eq:v-periodic} for $h=1$, we have
\[
v(x+R\omega)-v(x)=b_0(1,\omega) Rx_n.
\]
for any $R\in \mathbb{N}$. But then for $x=(0,R)$, by \eqref{eq:Liouville-k} (recall that $(k+1)\beta_0<1+\alpha_*<2$) we have
\[
|v(R\omega,R)|
=|v(0,R)+b_0(1,\omega)R^2|
\geq
	|b_0(1,\omega)|R^2
	-CR^{1+\alpha}
\geq \dfrac{|b_0(1,\omega)|}{2}
	R^2,
\]
contradicting \eqref{eq:Liouville-k} unless $b_0(1,\omega)\equiv 0$ for all $\omega\in\bS^{n-1}\cap\set{x_n=0}$. In view of \eqref{eq:v-periodic}, $v$ depends only on $x_n$ and the result follows from \Cref{lem:Liouville-1D}.
%
\end{proof}

%
%
%
%
%

\normalcolor

\section{Proof of the higher regularity}

\begin{proof}[Proof of \Cref{thm:main}]
In view of the interior estimates in \Cref{lem:int-est}, we just need to prove the following expansion: for any $z\in \p\Omega$, there exists $Q_z\in\R$, $r>0$ such that for any $x\in \Omega \cap B_r(z)$,
\begin{equation}\label{eq:main-expand}
|u(x)-Q_z d(x)|\leq C|x-z|^{1+\alpha}.
\end{equation}

Suppose on the contrary that there exists $z\in\p\Omega$ such that \eqref{eq:main-expand} does not hold for any $Q\in\R$, i.e.
\[
\sup_{r\in(0,1]}
    r^{-1-\alpha}
    \norm[L^\infty(B_r(z))]{u-Qd}
=\infty,
    \quad \forall Q\in\R.
\]
We split the proof by contradiction into a number of steps.

\medskip

\noindent\textit{Step 1: Choosing one $Q$ for each $r$. }

For each $r>0$ small, we choose a $Q(r)$ that minimizers $\norm[L^2(B_r(z))]{u-Qd}$, i.e.
\[
Q(r)=\dfrac{
        \int_{B_r(z)}ud\,dx
    }{
        \int_{B_r(z)}d^2\,dx
    }.
\]
We claim that
\begin{equation}\label{eq:blowup-Qr}
\sup_{r\in(0,1]}
    r^{-1-\alpha}
    \norm[L^\infty(B_r(z))]{u-Q(r)d}
=\infty.
\end{equation}
Suppose on the contrary that \eqref{eq:blowup-Qr} does not hold, i.e there exists a (large) $\bar{C}>0$ such that
\begin{equation*}
    \norm[L^\infty(B_r(z))]{u-Q(r)d}
\leq \bar{C} r^{1+\alpha}
    \quad \forall r\in(0,1].
\end{equation*}
Then, for any $x\in B_r(z)$,
\[
|Q(2r)-Q(r)|d(x)
\leq |u(x)-Q(2r)d(x)|+|u(x)-Q(r)d(x)|
\leq 2\bar{C} r^{1+\alpha}.
\]
Since $\sup_{B_r(z)}d=r$,
\[
|Q(2r)-Q(r)|\leq 2\bar{C} r^\alpha.
\]
Since for any $j\geq i \geq 0$,
\[
\abs{Q(2^{-i}r)-Q(2^{-j}r)}
\leq \bar{C} r^\alpha\sum_{k=i}^{j-1}2^{-k\alpha}
\leq C \bar{C} 2^{-i\alpha}r^\alpha,
\]
the limit $Q_0:=\lim_{r\searrow0}Q(r)$ exists,
and by fixing $i=0$ and letting $j\to\infty$,
\[
|Q_0-Q(r)|\leq C\bar{C} r^{\alpha}.
\]
In particular, putting $r=1$ implies $|Q_0|\leq C(\bar{C}+1)$, since $|Q(1)|\leq C$. Hence, for all $r\in(0,1]$,
\[\begin{split}
\norm[L^\infty(B_r(z))]{u-Q_0d}
&\leq \norm[L^\infty(B_r(z))]{u-Q(r)d}
    +\norm[L^\infty(B_r(z))]{(Q_0-Q(r))d}\\
&\leq C\bar{C} r^{1+\alpha}
    +C\bar{C} r^\alpha \sup_{B_r(z)}d
\leq Cr^{1+\alpha},
\end{split}\]
a contradiction. This proves \eqref{eq:blowup-Qr}.

\medskip

\noindent\textit{Step 2: The blow-up sequence and growth bound. }

Now we define the monotone quantity
\[
\theta(r):=
\max_{\bar{r}\in [r,1]}
    (\bar{r})^{-1-\alpha}
    \norm[L^\infty(B_{\bar{r}}(z))]{u-Q(\bar{r})d}.
\]
From $\lim_{r\searrow0}\theta(r)=\infty$, there is a sequence $r_m\to0$ such that
\[
(r_m)^{-1-\alpha}
\norm[L^\infty(B_{r_m}(z))]{u-Q(r_m)d}
=\theta(r_m)\to\infty.
\]
Define the blow-up sequence $v_m:(r_m)^{-1}(\Omega-z)\to\R$,
\[
v_m(x)
:=\dfrac{
        u(z+r_m x)-Q(r_m)d(z+r_m x)
    }{
        (r_m)^{1+\alpha}\theta(r_m)
    },
\]
which satisfies
\begin{equation}\label{eq:vm-norm1}
\norm[L^\infty(B_1)]{v_m}=1
\end{equation}
and, from the choice of $Q(r_m)$,
\begin{equation}\label{eq:vm-ortho}
\int_{B_1} v_m(x)d(z+r_m x)\,dx=0.
\end{equation}
We claim the following growth control
\begin{equation}\label{eq:vm-growth}
\norm[L^\infty(B_R \cap (r_m)^{-1}(\Omega-z))]{v_m}
\leq CR^{1+\alpha}
    \quad \forall R\geq1.
\end{equation}
Indeed, the arguments in \textit{Step 1} (replacing $\theta$ by $\theta(r)$, and the interval $(0,1]$ by $[r,1]$) shows that
\[
|Q(Rr)-Q(r)|\leq C (Rr)^{\alpha} \theta(r)
    \quad \forall R\geq 1.
\]
Also since $\theta$ is non-increasing,
\[
\theta(R r_m)\leq \theta(r_m).
\]
Then (here we implicitly extend suitable functions by $0$ outside $\Omega$)
\[\begin{split}
\norm[L^\infty(B_R)]{v_m}
&=
    \dfrac{1}{(r_m)^{1+\alpha}\theta(r_m)}
    \norm[L^\infty(B_{R r_m}(z))]
        {u-Q(r_m)d}\\
&\leq
    \dfrac{1}{(r_m)^{1+\alpha}\theta(r_m)}
    \left(
        \norm[L^\infty(B_{R r_m}(z))]
            {u-Q(R r_m)d}
        +\abs{Q(R r_m)-Q(r_m)}(R r_m)
    \right)\\
&\leq
    \dfrac{1}{(r_m)^{1+\alpha}\theta(r_m)}
    (R r_m)^{1+\alpha}\theta(R r_m)
    +\dfrac{C}{(r_m)^{1+\alpha}\theta(r_m)}
    (R r_m)^{\alpha} \theta(r_m)
    \cdot (R r_m)\\
&\leq
    R^{1+\alpha} + CR^{1+\alpha}.
\end{split}\]
This proves \eqref{eq:vm-growth}.

\medskip

\noindent\textit{Step 3: Equation for the blow-up sequence. }

Let $\Omega_m=(r_m)^{-1}(\Omega-z)$, which converges to a halfspace $\set{x\cdot e>0}$ as $m\to+\infty$, for $e=-\nu(z)$, the inward normal at $z\in\p\Omega$. By the properties in \Cref{lem:translation} and \Cref{lem:scaling}, the functions $v_m$ satisfy
\begin{equation}\label{eq:vm-eq}\begin{split}
|\cL_{\Omega_m}v_m(x)|
&=
    \dfrac{
        1
    }{
        (r_m)^{1+\alpha}\theta(r_m)
    }
    \abs{
        \cL_{(r_m)^{-1}(\Omega-z)}u(x)
        -Q(r_m)\cL_{(r_m)^{-1}(\Omega-z)}d(x)
    }\\
&=
    \dfrac{
        (r_m)^2
    }{
        (r_m)^{1+\alpha}\theta(r_m)
    }
    \abs{
        \cL_{\Omega}u(z+r_m x)
        -Q(r_m)\cL_{\Omega}d(z+r_m x)
    }
\to 0,
\end{split}\end{equation}
since $\cL_\Omega u$ and $\cL_\Omega d=\cL_{\Omega}\delta$ are bounded
in view of 
\Cref{lem:Ld<}. Now, by \Cref{prop:reg-bdry-global}, $\norm[C^{2\beta}(\Omega_m)]{v_m}\leq C$ for some $\beta>0$. So Arzel\`{a}--Ascoli Theorem asserts a subsequence of $v_m$ uniformly converging on compact sets in $\set{x\cdot e>0}$ to some function $v\in C^\beta(\set{x\cdot e>0})$, $\beta\in(0,1)$. 
Passing to the limit in \eqref{eq:vm-growth} and \eqref{eq:vm-eq} yields
\[
\norm[L^\infty(B_R \cap \set{x\cdot e>0})]{v}
\leq CR^{1+\alpha}
	\quad \forall R\geq1.
\]
and
\[\begin{cases}
\cL_{\set{x\cdot e>0}}v=0
	& \Textin \set{x\cdot e>0}\\
v=0
	& \texton \set{x\cdot e=0}.
\end{cases}\]

\medskip

\noindent\textit{Step 4: Classification of the limit, and the contradiction. }

By \Cref{prop:Liouville}, $v(x)=c_0(x\cdot e)$ for some constant $c_0\in\R$. Using the fact that
\[
\dfrac{d(z+r_m x)}{r_m}
\to x\cdot e
	\quad \textas m\to+\infty,
\]
we pass to the limit in \eqref{eq:vm-ortho} (upon dividing by $r_m$) to see that
\[
0=\int_{B_1\cap \set{x\cdot e>0}}
	v(x)(x\cdot e)
\,dx
=\int_{B_1\cap \set{x\cdot e>0}}
	c_0(x\cdot e)^2
\,dx.
\]
But this implies $c_0=0$ and hence $v=0$, contradicting \eqref{eq:vm-norm1} in the limit $m\to+\infty$. Therefore \eqref{eq:main-expand} holds and the proof is complete.
\end{proof}

\section{Existence of viscosity solution}
\label{sec:visc}

Consider the Dirichlet problem
\begin{equation}\label{eq:visc}\begin{cases}
\cL_{\Omega}u=f
    & \Textin \Omega,\\
u=g
    & \texton \p\Omega.
\end{cases}\end{equation}
We will establish the existence of a continuous viscosity solution using Perron's method, carefully exploiting the mid-range maximum principle that $\cL_{\Omega}$ satisfies. Throughout the section we assume $f\in C^{\alpha}(\Omega)$ and $g\in C(\p\Omega)$, for some $\alpha>0$.

Our goal is to prove the following.

\begin{prop}\label{prop:exist}
Let $\Omega\subset\R^n$ be a bounded domain of class $C^{1,1}$. Let $s\in(0,1)$ and $f\in C^\alpha$ for some $\alpha>0$. For any $f\in C^\alpha(\Omega)$ and $g\in C(\p\Omega)$, There exists a unique $u\in C(\overline{\Omega})$ satisfying \eqref{eq:visc} in the viscosity sense. Moreover, $u\in C^{2s+\alpha}(\Omega)\cap C(\overline{\Omega})$ is a classical solution to \eqref{eq:visc}.
\end{prop}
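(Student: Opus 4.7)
I would prove existence via Perron's method adapted to the variable-range operator $\cL_\Omega$, using the mid-range maximum principle \Cref{prop:MP-mid} for comparison, the barriers of \Cref{prop:barrier} and \Cref{lem:super-global} to force continuous attainment of the boundary data, and finally the interior estimates of \Cref{lem:int-est} to upgrade the viscosity solution to a classical one.

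The first step is to fix a suitable viscosity formulation. Following the usual convention for nonlocal equations, $u\in\mathrm{USC}(\overline\Omega)$ is a viscosity subsolution if, for every $C^2$ test function $\varphi$ touching $u$ from above at an interior point $x_0$, one has $\cL_\Omega\bigl[\varphi\,\chi_{B_\rho(x_0)}+u\,\chi_{\R^n\setminus B_\rho(x_0)}\bigr](x_0)\leq f(x_0)$ for every small $\rho>0$, and dually for supersolutions. The key auxiliary result is then \textbf{comparison}: if $u$ is a subsolution, $v$ a supersolution, and $u\leq v$ on $\p\Omega$, then $u\leq v$ in $\Omega$. To establish it I would argue by contradiction: after perturbing $v$ by $\eta\varphi^{(1)}$ (the global super-solution of \Cref{lem:super-global}) to push $v$ strictly above $u$ in a neighborhood of $\p\Omega$, any positive maximum of $u-v$ is attained at an interior point, and the mid-range strong maximum principle \Cref{prop:MP-mid} applied to the test-function difference forces $u\equiv v$ on the interacting set, contradicting strict separation near the boundary.

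Next, to match the boundary data one constructs, for each $z\in\p\Omega$ and each $\eps>0$, classical super- and subsolutions $w^{\pm}_{z,\eps}$ pieced together from $\varphi^{(z)}$ in \Cref{prop:barrier} and $\varphi^{(1)}$ in \Cref{lem:super-global}, which oscillate at $z$ by at most $\eps$. Setting
\[
u(x)=\sup\bigl\{w(x):w\text{ is a viscosity subsolution, }w\leq g\text{ on }\p\Omega\bigr\},
\]
the standard Perron arguments (showing that the upper envelope $u^*$ is a subsolution and the lower envelope $u_*$ a supersolution, via the usual bump construction at putative failure points) combined with the barriers give $u^*=u_*=g$ on $\p\Omega$, and comparison then forces $u^*\leq u_*$, whence $u\in C(\overline\Omega)$ is the unique viscosity solution.

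For the regularity upgrade, since $u\in C(\overline\Omega)$ and $f\in C^\alpha(\Omega)$, the first part of \Cref{lem:int-est} yields $u\in C^{2s\wedge(1-\eps)}_{\loc}(\Omega)$; feeding this into the second part bootstraps to $u\in C^{2s+\alpha}_{\loc}(\Omega)$, so $\cL_\Omega u$ is defined pointwise and equals $f$. The step I expect to be the main obstacle is the comparison principle: because the horizon $d(x)$ of the kernel varies with $x$, a naive Ishii-type doubling-variable analysis produces evaluations with mismatched interaction balls $B_{d(x)}(x)$ and $B_{d(y)}(y)$, and one must verify that the discrepancy does not spoil the sign of the nonlocal terms. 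This is precisely the setting in which the mid-range structure of \Cref{prop:MP-mid}, rather than a naive global principle, becomes indispensable.
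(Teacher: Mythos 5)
Your strategy coincides with the paper's: Perron's method with the mid-range maximum principle \Cref{prop:MP-mid} for comparison, the barriers of \Cref{prop:barrier} and \Cref{lem:super-global} for boundary attainment, and an interior-regularity bootstrap. The one genuine flaw is in your regularity upgrade. You propose to apply \Cref{lem:int-est} to the continuous viscosity solution $u$, but that lemma hypothesizes $u\in C^{2s+}(\overline{B_1})$, i.e.\ it is an a priori estimate for solutions that are \emph{already classical}; it cannot be fed a merely continuous viscosity solution to manufacture that initial regularity. The paper instead (in \Cref{lem:int-est-visc}) rewrites the equation as $\Ds u=F[u]$, verifies that $u$ is a viscosity solution of this \emph{fractional-Laplacian} problem, and then invokes the viscosity-to-classical interior regularity theory for $\Ds$ from \cite{FR-book-s}; only after that does the bootstrap in the style of \Cref{lem:int-est} apply. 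Without this intermediate step (or some other Schauder-type result for viscosity solutions of $\cL_\Omega$ or $\Ds$), the passage from $C(\overline\Omega)$ to $C^{2s+\alpha}_{\loc}$ is unjustified.

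A secondary remark: you flag doubling-of-variables for the comparison principle as the main obstacle, expecting trouble from the mismatched horizons $d(x)$ and $d(y)$. The paper does not run a doubling argument at all; it proves the mid-range maximum principle for a single viscosity super-solution by touching from below with a paraboloid, and records the comparison \Cref{cor:comp-mid} as a corollary. In the Perron scheme, most invocations compare the envelope against a \emph{classical} barrier, where no doubling is needed. The remaining use (comparing $u$ with $u_*$) would, in full rigor, need either a doubling argument or the observation that for a linear operator one can regularize one of the two functions — a point the paper glosses over but which you are right to be alert to. Still, this is a subtlety of presentation rather than a design choice distinguishing your approach from the paper's.
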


The notion of viscosity solutions has been successfully used in nonlocal equations, see for example \cite{CS09, SV14, RS5, KKP17}. For the proof, we extend the clean arguments described in \cite{FR20} using the barrier constructed in \Cref{sec:barrier}.

\begin{defn}[Semi-continuous functions]
We denote by
\[
USC(\overline{\Omega}) \qquad \text{(resp. } LSC(\overline{\Omega}) \text{)}
\]
the space of upper (resp. lower) semi-continuous functions in $\overline{\Omega}$. For $u\in L^\infty(\overline{\Omega})$ we define the USC (resp. LSC) envelope as
\[
u^*(x)=\sup_{x_k\to x} \limsup_{k\to\infty} u(x_k)
    \qquad \text{(resp. }
u_*(x)=\inf_{x_k\to x} \liminf_{k\to\infty} u(x_k)
    \text{)}.
\]
%
\end{defn}

We also need a localized definition based on \Cref{prop:MP-mid}.

\begin{defn}[Viscosity solutions]
\label{def:visc-mid}
Let $G$ be an non-empty, open set in $\Omega$. The domain of interaction of $G$ is
\[
G_*=\bigcup_{y\in G} B_{d_{\Omega}(y)}(y).
\]
(Thus, $\cL_{\Omega}=\cL_{G_*}$ in $G$.) Let $f\in C(\Omega)$. We say that $u\in USC(\overline{G_*})$ (resp. $u\in LSC(\overline{G_*})$) is a (mid-range) viscosity sub-solution of
\begin{equation*}
\cL_{G_*}u=f
	\text{ in } G,
\end{equation*}
if for any $x\in \Omega$, any neighborhood $N_x$ of $x$ in $\Omega$ and any $\varphi\in  C^2(N_x)\cap L^1(\overline{G_*})$ with
\[
u(x)=\varphi(x),
	\quad
u\leq \varphi \text{ (resp. $u\geq\varphi$) }
\text{ in } \overline{G_*},
\]
we have
\[
\cL_{G_*}\varphi(x_0) \leq f(x_0)
\qquad \text{(resp. }
\cL_{G_*}\varphi(x_0) \geq f(x_0)
\text{).}
\]
In particular, when $G=\Omega$, $\overline{G_*}=\overline{\Omega}$. We say that $u\in C(\overline{\Omega})$ is a (global) viscosity solution in $\Omega$ if it is both a sub-solution and a super-solution in $\Omega$.
\end{defn}


\begin{remark}\label{rmk:extend}
Global sub-(resp. super-) solutions are necessarily mid-range sub-(resp. super-) solutions (but not vice versa). This is because the test function $\varphi \in L^1(\overline{\Omega})$ can be extended to keep the sign of $u-\varphi$ without affecting the computation of $\cL_{\Omega}\varphi$ at the contact point.
\end{remark}

\subsection{Comparison principle for viscosity solutions}

We generalize \Cref{prop:MP-mid} to viscosity solutions.
%
%

\begin{lem}[Mid-range maximum principle]
Let $G\subseteq \Omega$ be open. Suppose $u\in LSC(\overline{G_*})$ solves, in the viscosity sense, 
\[\begin{cases}
\cL_{G_*} u \geq 0
    & \text{ in } G,\\
u\geq 0
    & \text{ in } \overline{G_*}\setminus G.
\end{cases}\]
Then $u\geq 0$ in $G$.
\end{lem}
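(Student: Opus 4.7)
The strategy is to argue by contradiction, mirroring the classical proof of \Cref{prop:MP-mid} but replacing the forbidden pointwise evaluation of $\cL_{G_*}u$ at a minimum (unavailable for LSC $u$) by a carefully chosen discontinuous test function. Suppose $\inf_{\overline{G_*}}u<0$. Since $\overline{G_*}\subset \overline\Omega$ is compact and $u\in LSC(\overline{G_*})$, the value $m:=\min_{\overline{G_*}}u$ is attained at some $x_0$, and the hypothesis $u\geq 0$ on $\overline{G_*}\setminus G$ forces $x_0\in G$ with $u(x_0)=m<0$.

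The naive constant choice $\varphi\equiv m$ gives $\cL_{G_*}\varphi(x_0)=0$ and wastes the crucial information that $u\geq 0>m$ on $\overline{G_*}\setminus G$. To bring this in, I would take
\[
\varphi(y)=\begin{cases} m & \text{if } y\in B_r(x_0),\\ \min(u(y),0) & \text{otherwise,}\end{cases}
\]
for small $r>0$ with $\overline{B_r(x_0)}\subset G\cap B_{d(x_0)}(x_0)$. This $\varphi$ is constant (hence $C^2$) in a neighborhood of $x_0$, bounded between $m$ and $0$ (hence $L^1$ on $\overline{G_*}$), satisfies $\varphi(x_0)=u(x_0)=m$, and obeys $\varphi\leq u$ on $\overline{G_*}$, so it is admissible as a test function from below in the sense of \Cref{def:visc-mid}.

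Using the symmetric form of $\cL_{G_*}$, for $r<|y|<d(x_0)$ one has
\[
2m-\min(u(x_0+y),0)-\min(u(x_0-y),0)\leq 0,
\]
since $\min(u(\cdot),0)\geq m$ pointwise. The inequality is strict unless $u(x_0+y)=u(x_0-y)=m$, and in particular is strict whenever either of $x_0\pm y$ lies in $\overline{G_*}\setminus G$, where $u\geq 0>m$. Combined with the viscosity requirement $\cL_{G_*}\varphi(x_0)\geq 0$, this integrand must vanish a.e.; letting $r\to 0$ gives $u\equiv m$ a.e.\ on $B_{d(x_0)}(x_0)$, which upgrades to everywhere on the ball by LSC. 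Since $u\geq 0>m$ on $\overline{G_*}\setminus G$, this forces $B_{d(x_0)}(x_0)\subset G$.

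Propagation then finishes the proof: the set $A:=\{u=m\}\cap G$ is open (by the previous step, every $x\in A$ has $B_{d(x)}(x)\subset A$) and closed in $G$ (by LSC), so $A$ contains the connected component $G_0$ of $G$ through $x_0$. Since $G$ is bounded, $\partial G_0\neq\varnothing$; and because distinct components of the open set $G$ have disjoint closures in $G$, one has $\partial G_0\subset \overline{G_*}\setminus G$. LSC from $G_0$ gives $u\leq m$ on $\partial G_0$, while the standing hypothesis gives $u\geq 0>m$ there, the desired contradiction. The main obstacle is thus the first step: at a minimum of a merely lower semi-continuous $u$, one cannot plug $u$ directly into $\cL_{G_*}$; the discontinuous test function above is the cleanest device to access the gap $u\geq 0>m$ on $\overline{G_*}\setminus G$ within the test-function framework of \Cref{def:visc-mid}.
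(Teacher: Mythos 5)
Your proof is correct, but it takes a genuinely different route from the paper's. You run the viscosity analogue of the pointwise argument in \Cref{prop:MP-mid}: locate the minimizer $x_0\in G$ of $u$ on the compact set $\overline{G_*}$, test with a discontinuous function (constant $m$ near $x_0$, equal to $\min(u,0)$ outside, which is admissible under \Cref{def:visc-mid} since that definition only asks for $C^2$ near the contact point and $L^1$ on $\overline{G_*}$), deduce that the integrand is nonpositive hence vanishes a.e., upgrade to $u\equiv m$ on $B_{d(x_0)}(x_0)$ by lower semicontinuity, and then propagate by an open-and-closed argument to the whole component $G_0$, reaching the contradiction on $\partial G_0\subset\overline{G_*}\setminus G$. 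The paper instead uses a single global $C^2$ test function: the convex paraboloid $\tilde\varphi(x)=-\tfrac{\delta}{2}+\tfrac{\delta}{4(1+\diam(G_*)^2)}|x|^2$, shifted down to touch $u$ from below (necessarily at a point of $G$, since $u\geq 0>\tilde\varphi$ on $\overline{G_*}\setminus G$), and then a one-line computation gives $\cL_{G_*}\varphi(x_0)=-\tfrac{n\delta}{2(1+\diam(G_*)^2)}<0$, contradicting $\cL_{G_*}\varphi(x_0)\geq 0$ outright. The paper's paraboloid thus avoids the a.e.\ upgrade and propagation entirely because the operator applied to it is strictly negative wherever contact occurs; your version is heavier but is a faithful viscosity translation of the classical strong maximum principle and extracts the extra rigidity $u\equiv m$ on the reaching ball along the way.
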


\begin{proof}
If not, $\min_{\overline{\Omega}}u=-\delta$ for some $\delta>0$. Using a translated coordinate system if necessary, we assume that $0\in G$. The convex paraboloid
\[
\tilde{\varphi}(x)
=
    -\dfrac{\delta}{2}
    +\dfrac{\delta}{4(1+\diam(G_*)^2)}|x|^2
\]
takes values in $[-\delta/2,-\delta/4]$ and so (by moving down then up) there exists $c>0$ such that
\[
\varphi(x)=\tilde{\varphi}(x)-c
\]
touches $u$ from below at some $x_0\in G$, i.e. $u(x_0)=\varphi(x_0)$ and $u\geq \varphi$ in $G$. By construction $u\geq 0\geq \tilde\varphi\geq \varphi$ in $\overline{G_*}\setminus G$. On the one hand, by \Cref{def:visc-mid},
\[
\cL_{G_*}\varphi(x_0) \geq 0.
\]
On the other hand,
\begin{align*}
\cL_{G_*}\varphi(x_0)
&=\frac{C_{n,s}}{2}
	\frac{\delta}{4(1+\diam(G_*)^2)}
	d_{U}(x_0)^{2s-2}
	\int_{|y|<d_{U}(x_0)}
		\frac{
			2|x|^2-|x+y|^2-|x-y|^2
		}{
			|y|^{n+2s}
		}
	\,dy\\
&=-\frac{C_{n,s}}{2}
	\frac{\delta}{2(1+\diam(G_*)^2)}
	d_{U}(x_0)^{2s-2}
	\int_{|y|<d_{U}(x_0)}
		\frac{
			|y|^2
		}{
			|y|^{n+2s}
		}
	\,dy\\
&=-	\frac{n\delta}{2(1+\diam(G_*)^2)}
<0,
\end{align*}
a contradiction.
\end{proof}

\begin{cor}[Mid-range comparison principle]
\label{cor:comp-mid}
Let $G\subseteq \Omega$ be open. Suppose $u\in USC(\overline{G_*})$, $v\in LSC(\overline{G_*})$ are respectively super- and sub-solutions to \eqref{eq:visc}, i.e.
\[\begin{cases}
\cL_{G_*}u
\leq f 
\leq \cL_{G_*}v
    & \Textin G,\\
u\leq v
    & \texton \overline{G_*}\setminus G,
\end{cases}\]
in the viscosity sense, then
\[
u\leq v
    \quad \Textin G.
\]
\end{cor}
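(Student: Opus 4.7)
The plan is to apply the preceding viscosity mid-range maximum principle to $w:=v-u\in LSC(\overline{G_*})$. Since $w\geq 0$ on $\overline{G_*}\setminus G$ is given, the only thing to verify is that $w$ is a viscosity super-solution of $\cL_{G_*}w\geq 0$ in $G$; formally by linearity, $\cL_{G_*}w = \cL_{G_*}v - \cL_{G_*}u \geq f - f = 0$. To bypass the well-known obstacle that this linearity does not transfer directly to the viscosity setting, I first reduce to a strict super-solution. Let $\varphi^{(1)}$ be the classical global super-solution from \Cref{lem:super-global} (so $\cL_\Omega\varphi^{(1)}=1$), and set $v_\eta := v + \eta\varphi^{(1)}$ for $\eta>0$. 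If $\psi$ touches $v_\eta$ from below at $x_0\in G$, then $\psi - \eta\varphi^{(1)}$ is still a valid test function (as $\varphi^{(1)}\in C^2(\overline{\Omega})$) and touches $v$ from below at $x_0$, so the hypothesis on $v$ gives $\cL_{G_*}\psi(x_0)\geq f(x_0)+\eta$. Thus $v_\eta$ is a viscosity super-solution of $\cL_{G_*}v_\eta\geq f+\eta$. If I prove $u\leq v_\eta$ in $G$ for each $\eta>0$, passing $\eta\to 0^+$ yields the claim.

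For the strict case, I regularize by sup-/inf-convolution: for $\delta>0$ small,
\[
u^\delta(x) := \sup_{y\in\overline{G_*}}\Bigl(u(y) - \tfrac{|x-y|^2}{\delta}\Bigr),
\qquad
(v_\eta)_\delta(x) := \inf_{y\in\overline{G_*}}\Bigl(v_\eta(y) + \tfrac{|x-y|^2}{\delta}\Bigr).
\]
Following the standard framework for nonlocal viscosity theory (cf.\ \cite{CS09, FR20}), $u^\delta$ is semi-convex and $(v_\eta)_\delta$ semi-concave, both twice differentiable a.e., converging monotonically and uniformly on compacts to $u$ and $v_\eta$. On an interior subdomain $G^\delta\Subset G$ obtained by removing an $O(\sqrt{\delta})$-neighborhood of $\p G$, these approximants satisfy (in the a.e./viscosity sense)
\[
\cL_{G_*}u^\delta \leq f + \omega(\delta),
\qquad
\cL_{G_*}(v_\eta)_\delta \geq f + \eta - \omega(\delta),
\]
with a modulus $\omega(\delta)\to 0$ as $\delta\to 0$ (here $f\in C^\alpha(\Omega)$ suffices). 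Suppose for contradiction that $\sup_{G^\delta}(u^\delta - (v_\eta)_\delta) > 0$ for some small $\delta$. Uniform boundary convergence keeps the boundary values close to the original $u-v_\eta\leq 0$, so the maximum is attained at an interior point $x_0\in G^\delta$. At $x_0$ the inequality $u^\delta(x_0)-u^\delta(y)\geq (v_\eta)_\delta(x_0)-(v_\eta)_\delta(y)$ holds for all $y\in B_{d_{G_*}(x_0)}(x_0)\subset\overline{G_*}$; integrating against the positive kernel $|y-x_0|^{-n-2s}$ yields $\cL_{G_*}u^\delta(x_0)\geq \cL_{G_*}(v_\eta)_\delta(x_0)\geq f(x_0)+\eta-\omega(\delta)$, contradicting $\cL_{G_*}u^\delta(x_0)\leq f(x_0)+\omega(\delta)$ once $2\omega(\delta)<\eta$. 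Hence $u^\delta\leq (v_\eta)_\delta$ on $G^\delta$; letting $\delta\to 0$ gives $u\leq v_\eta$ on $G$, and finally $\eta\to 0^+$ gives $u\leq v$.

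The main obstacle is adapting the sup-/inf-convolution machinery to the censored operator $\cL_{G_*}$, whose variable horizon $d_{G_*}(\cdot)$ requires careful bookkeeping of the boundary layer: the subdomain $G^\delta$ must be simultaneously (i) thick enough inside $G$ so that every integration ball $B_{d_{G_*}(x)}(x)$ with $x\in G^\delta$ lies inside $\overline{G_*}$ (where the convolutions are defined), and (ii) such that $G^\delta$ exhausts $G$ as $\delta\to 0$. The $C^{1,1}$ regularity of $\p\Omega$ (hence Lipschitz control of $d_\Omega$) makes this construction possible, but quantifying the uniform modulus $\omega(\delta)\to 0$ that absorbs both the convolution error and the continuity of $f$ is the most delicate technical ingredient.
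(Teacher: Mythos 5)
The paper states \Cref{cor:comp-mid} without proof, evidently treating it as an immediate consequence of the preceding mid-range maximum principle via the formal identity $\cL_{G_*}(v-u)=\cL_{G_*}v-\cL_{G_*}u\geq 0$. As you correctly point out, this linearity does \emph{not} transfer to the viscosity setting: $v-u$ is LSC, but one cannot simply add a test function for $v-u$ to $u$ (which is not $C^2$) to produce a test function for $v$. Since the corollary is later invoked in \Cref{lem:Perron-LSC} to compare two genuinely viscosity objects $u^*$ and $u_*$ (not one classical and one viscosity), the full viscosity-versus-viscosity statement really is needed, and the paper's omission leaves a nontrivial gap that your argument fills.

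Your route --- first perturb $v$ to a strict super-solution $v_\eta=v+\eta\varphi^{(1)}$ using the global paraboloid barrier (this step is clean and correct, noting that $\cL_{G_*}=\cL_{\Omega}$ on $G$ since $d_{G_*}=d_{\Omega}$ there), then regularize both functions by sup/inf-convolution --- is the standard Jensen--Ishii/Caffarelli--Silvestre machinery and is the right tool. The contradiction step, integrating the pointwise inequality $u^\delta(x_0)-u^\delta(y)\geq(v_\eta)_\delta(x_0)-(v_\eta)_\delta(y)$ against the positive kernel, is valid because at a maximum of a semi-convex minus a semi-concave function both are punctually $C^{1,1}$, so $\cL_{G_*}$ is evaluable classically there.

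One small bookkeeping point to tighten: you should take the maximum of $u^\delta-(v_\eta)_\delta$ over all of $\overline{G_*}$, not merely over the shrunken set $G^\delta$, since the kernel inequality must hold for every $y\in B_{d_{G_*}(x_0)}(x_0)\subset\overline{G_*}$ and this set generally exits $G^\delta$. Having done so, you then argue (using that $u^\delta\searrow u$, $(v_\eta)_\delta\nearrow v_\eta$ with $u^\delta-(v_\eta)_\delta$ continuous and a Dini-type maximum-convergence lemma for semicontinuous limits on the compact $\overline{G_*}\setminus G$, together with $u-v_\eta\leq -\eta\inf\varphi^{(1)}<0$ there) that for $\delta$ small enough the maximizer lands inside $G^\delta$, where the convolutions do satisfy the perturbed sub/super-solution inequalities. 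With that adjustment the argument is complete.

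A cosmetic remark on the corollary's statement: as written, $\cL_{G_*}u\leq f$ makes $u$ the \emph{sub}-solution and $\cL_{G_*}v\geq f$ makes $v$ the \emph{super}-solution, so the phrase ``respectively super- and sub-solutions'' has the labels swapped; your reading (and the conclusion $u\leq v$) is the correct one.
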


%
%
%

\subsection{Supremum of sub-solutions}

Define the family of admissible sub-solutions as
\begin{align*}
\cA:=\set{v\in USC(\overline{\Omega}): \cL_{\Omega}v\leq f \text{ in $\Omega$, } v\leq g \text{ on $\partial\Omega$}}.
\end{align*}
The pointwise supremum of all sub-solutions in $\cA$ is defined as
\begin{equation}\label{eq:Perron-sup}
u(x):=\sup_{v\in\cA} v(x).
\end{equation}
We will prove that $u$ is a viscosity solution by showing that $u^*=g$ on $\partial\Omega$ so that $u^*=u$, and then verify that $u_*$ is a super-solution so that, by comparison, $u_*=u$.

\begin{prop}[Perron's method]\label{prop:Perron}
The function $u$ defined in \eqref{eq:Perron-sup} lies in $C(\overline{\Omega})$ and is a viscosity solution to \eqref{eq:visc}.
\end{prop}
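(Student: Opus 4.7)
The plan is to execute Perron's method in four steps, leveraging the mid-range comparison principle \Cref{cor:comp-mid} and the barriers from \Cref{sec:barrier}. I would first check that $\cA$ is non-empty and $u$ is bounded, then establish the boundary limits $u^* = u_* = g$ on $\partial\Omega$, then verify that $u^*$ is a viscosity sub-solution and $u_*$ a viscosity super-solution, and finally combine these via comparison to conclude $u = u^* = u_* \in C(\overline{\Omega})$.

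For Step 1, the torsion-type super-solution $\varphi^{(1)}$ from \Cref{lem:super-global} provides explicit smooth sub- and super-solutions $\pm \norm[L^\infty(\Omega)]{f} \varphi^{(1)} \pm \norm[L^\infty(\partial\Omega)]{g}$, yielding $\cA \neq \emptyset$ and a uniform upper bound on every element of $\cA$ (via \Cref{cor:comp-mid}). For Step 2, fix $x_0 \in \partial\Omega$ and $\varepsilon > 0$, with $\delta$ from continuity of $g$. Using the local barrier $\varphi^{(x_0)}$ from \Cref{prop:barrier} with a multiplier $K$ absorbing the Step~1 $L^\infty$ bounds, I would build a local super-solution $\Phi^+(x) = g(x_0) + \varepsilon + K \varphi^{(x_0)}(x)$ on $\Omega \cap B_{r_0}(x_0)$, satisfying $\Phi^+ \geq g$ on $\partial\Omega \cap B_{r_0}(x_0)$ and $\Phi^+ \geq u$ on the interaction edge $\Omega \cap (B_{2r_0} \setminus B_{r_0})(x_0)$; mid-range comparison then gives $\limsup_{x \to x_0} u(x) \leq g(x_0) + \varepsilon$. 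Symmetrically, $\Phi^- = g(x_0) - \varepsilon - K \varphi^{(x_0)}$, truncated below by the Step~1 global sub-solution to be globally admissible, lies in $\cA$ and yields the reverse inequality, so $u^*(x_0) = u_*(x_0) = g(x_0)$.

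Step 3 is the core. That $u^*$ is a sub-solution follows from the standard ``sup of sub-solutions'' argument: given $\varphi \in C^2 \cap L^1$ touching $u^*$ from above at $x_1 \in \Omega$, extract $v_k \in \cA$ and $x_k \to x_1$ with $v_k(x_k) \to u^*(x_1)$, add a small quadratic perturbation to $\varphi$ to achieve strict touching of $v_k$ at some $y_k \to x_1$, and pass to the limit in $\cL_\Omega(\cdot)(y_k) \leq f(y_k)$ using dominated convergence in the tail. Thus $u^* \in \cA$ and, by definition of the supremum, $u^* = u$. For $u_*$, I would argue by contradiction: if $u_*$ fails to be a super-solution at some $x_1 \in \Omega$ with a test $\varphi$ satisfying $\cL_\Omega \varphi(x_1) < f(x_1)$, then perturbation by $\eta > 0$ and a quadratic penalty gives a strict classical sub-solution $\psi$ on a small ball $B_\rho(x_1)$ with $\psi(x_1) > u(x_1)$ but $\psi < u_* \leq u$ on $\partial B_\rho(x_1)$; splicing $\tilde{u} := \max\{u, \psi\}$ inside $B_\rho(x_1)$ and $u$ outside produces an element of $\cA$ strictly exceeding $u$, contradicting the supremum. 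Step 4 then applies \Cref{cor:comp-mid} to $u = u^*$ and $u_*$ with common boundary data $g$, giving $u \leq u_*$; the reverse $u_* \leq u^* = u$ is automatic, so $u = u_* = u^*$ is continuous on $\overline{\Omega}$ and a viscosity solution.

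The main obstacle is the nonlocal bump-up in Step 3. Because $\cL_\Omega$ has a nontrivial kernel, enlarging $u$ on $B_\rho(x_1)$ alters $\cL_\Omega \tilde{u}(y)$ at every $y$ whose interaction ball $B_{d(y)}(y)$ meets $B_\rho(x_1)$, potentially destroying the sub-solution property outside $B_\rho$. The saving grace is monotonicity: since $\tilde{u} \geq u$ everywhere with equality off $B_\rho$, for any $y \notin B_\rho$,
\[
\cL_\Omega \tilde{u}(y)
= C_{n,s} d(y)^{2s-2} \int_{B_{d(y)}(y)} \frac{\tilde{u}(y) - \tilde{u}(z)}{|y-z|^{n+2s}} \, dz
\leq \cL_\Omega u(y) \leq f(y),
\]
so the sub-solution property is preserved globally; inside $B_\rho$ it is guaranteed by the strict inequality $\cL_\Omega \psi < f$. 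A secondary delicate point is Step 2: patching the local barriers $\Phi^\pm$ into globally defined, admissible comparison functions so that the hypotheses of \Cref{cor:comp-mid} are verifiable cleanly, in particular arranging the ordering on the mid-range interaction edge.
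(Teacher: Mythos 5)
Your outline reproduces the paper's own Perron construction closely: a global sub/super-solution pair, boundary barriers via \Cref{prop:barrier} to fix the trace of $u^*$ and $u_*$ at $\partial\Omega$, the ``sup of sub-solutions is a sub-solution'' argument for $u^*$, the bump-up contradiction for $u_*$, and a final comparison. This is the same architecture the paper uses (Lemmas \ref{lem:Perron-int} through \ref{lem:Perron-LSC}). Two points, however, need repair before the write-up is rigorous.

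First, your displayed justification for the bump-up,
\[
\cL_\Omega \tilde{u}(y)
= C_{n,s} d(y)^{2s-2} \int_{B_{d(y)}(y)} \frac{\tilde{u}(y) - \tilde{u}(z)}{|y-z|^{n+2s}} \, dz
\leq \cL_\Omega u(y) \leq f(y),
\]
treats $\cL_\Omega u(y)$ as a pointwise quantity, but $u$ is merely USC and a viscosity sub-solution, so $\cL_\Omega u(y)$ is not defined. The rigorous version of your (correct) monotonicity intuition is the test-function argument the paper records as \Cref{lem:max-visc}: at a contact point $y$ where $\tilde{u}(y)=u(y)$, any $\phi$ touching $\tilde{u}$ from above also touches $u$ from above (since $u\leq\tilde{u}\leq\phi$ with equality at $y$), so the viscosity inequality for $u$ transfers; at a contact point where $\tilde{u}(y)=\psi(y)>u(y)$, one is inside $B_\rho(x_1)$ and the pointwise $C^2$ inequality for $\psi$ applies.

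Second, the claim ``$\psi(x_1) > u(x_1)$'' is not guaranteed. Your test function $\varphi$ touches $u_*$ from below, so $\psi(x_1)=u_*(x_1)+\eta$, and a priori $u(x_1)$ may exceed $u_*(x_1)+\eta$ since $u$ is only known to be USC. The contradiction therefore cannot be read off at the single point $x_1$. It comes instead from the definition of the LSC envelope: there is a sequence $x_k\to x_1$ with $u(x_k)\to u_*(x_1)$, so for $k$ large $\psi(x_k)>u(x_k)$ and hence $\tilde{u}(x_k)>u(x_k)$, contradicting $\tilde{u}\in\cA$ (which forces $\tilde{u}\leq u$). The paper's version of this step in \Cref{lem:Perron-LSC-envelope} uses precisely this argument. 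Once these two points are patched, your proof coincides with the paper's.
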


\begin{lem}\label{lem:Perron-int}
The USC envelope of $u$ defined by \eqref{eq:Perron-sup} is a sub-solution in the interior, i.e.
\[
\cL_{\Omega}u^* \leq f
	\qquad \text{ in } \Omega.
\]
As a result, $\sup_{\overline{\Omega}}u^*\leq C$, for $C>0$ depending only on $n$, $s$, $\norm[L^\infty(\Omega)]{f}$ and $\Omega$.
\end{lem}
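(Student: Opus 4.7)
The plan is to carry out the standard Perron-type argument, specialized to the mid-range viscosity framework in \Cref{def:visc-mid}. I split the proof into two steps, one for each assertion.

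\textbf{Step 1 (sub-solution property).} Fix $x_0 \in \Omega$ and let $\varphi \in C^2(N_{x_0}) \cap L^1(\overline\Omega)$ touch $u^*$ from above at $x_0$. By adding a small perturbation such as $|x-x_0|^4$ and then passing to the limit, I may assume the touching is strict: $\varphi > u^*$ on $\overline\Omega \setminus \set{x_0}$. By the very definition of the USC envelope, there exist $v_k \in \cA$ and $y_k \to x_0$ with $v_k(y_k) \to u^*(x_0) = \varphi(x_0)$. Since each $v_k$ is USC, the function $\varphi - v_k$ is LSC on the compact set $\overline\Omega$ and attains its infimum $\alpha_k := \min_{\overline\Omega}(\varphi - v_k)$ at some $z_k \in \overline\Omega$, with $\alpha_k \leq \varphi(y_k) - v_k(y_k) \to 0$. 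Combining $v_k \leq u \leq u^*$ with the strict touching at $x_0$ forces $z_k \to x_0$ and $\alpha_k \to 0$. Now $\varphi - \alpha_k$ touches $v_k$ from above globally on $\overline\Omega$; since $\cL_\Omega$ annihilates constants and $v_k$ is a viscosity sub-solution, I obtain
\[
\cL_\Omega \varphi(z_k) = \cL_\Omega(\varphi - \alpha_k)(z_k) \leq f(z_k).
\]
Passing $k \to \infty$, the continuity of $f$ and of $z \mapsto \cL_\Omega \varphi(z)$ (guaranteed by $\varphi \in C^2$ and the continuous dependence of the interaction ball $B_{d(z)}(z)$ on $z$) yields $\cL_\Omega \varphi(x_0) \leq f(x_0)$.

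\textbf{Step 2 (sup bound).} Define the competitor
\[
\Phi(x) = \norm[L^\infty(\Omega)]{f}\,\varphi^{(1)}(x) + \norm[L^\infty(\p\Omega)]{g},
\]
with $\varphi^{(1)}$ from \Cref{lem:super-global}. Then $\cL_\Omega \Phi \equiv \norm[L^\infty(\Omega)]{f} \geq f$ in $\Omega$ and $\Phi \geq \norm[L^\infty(\p\Omega)]{g} \geq g$ on $\p\Omega$, so $\Phi$ is a smooth classical (hence viscosity) super-solution. Each $v \in \cA$ satisfies $v \leq g \leq \Phi$ on $\p\Omega$, so by USC $u^* \leq \Phi$ on $\p\Omega$. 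Applying the mid-range comparison principle \Cref{cor:comp-mid} with $G = \Omega$ (so $G_* = \Omega$) to the sub-solution $u^*$ and the super-solution $\Phi$, I conclude $u^* \leq \Phi$ throughout $\overline\Omega$, which gives the desired bound $\sup_{\overline\Omega} u^* \leq C(n,s,\Omega)\norm[L^\infty(\Omega)]{f} + \norm[L^\infty(\p\Omega)]{g}$.

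\textbf{Main obstacle.} The delicate point is the contact-point convergence $z_k \to x_0$ for the varying sequence of merely USC sub-solutions $v_k$: one must carefully exploit the strict touching of $\varphi$ at $x_0$ to rule out a subsequence of $z_k$ escaping to a secondary contact point. A pleasant feature of this problem, in contrast with the standard fractional Laplacian on $\R^n$, is that the censored, isotropic interaction $B_{d(z)}(z) \subset \Omega$ makes the global admissibility of the shifted test function $\varphi - \alpha_k$ automatic, and the passage of the limit $\cL_\Omega\varphi(z_k) \to \cL_\Omega\varphi(x_0)$ routine, free of any tail-decay subtleties.
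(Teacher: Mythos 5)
Your proof is correct and implements exactly what the paper defers to in its citation of \cite[Lemma 4.15]{FR20}: a Perron argument where the test function is made to touch from above \emph{globally} in $\overline\Omega$ (rather than on all of $\R^n$, as for the restricted fractional Laplacian), so that the shifted test function $\varphi-\alpha_k$ is automatically admissible. The one nitpick is a mild circularity in your Step~2: you invoke the comparison principle on $u^*$, but the USC envelope is only defined in the paper for $u\in L^\infty(\overline\Omega)$, so one should first bound $u$ itself by comparing each individual $v\in\cA$ against $\Phi$ via \Cref{cor:comp-mid} (giving $v\leq\Phi$ and hence $u\leq\Phi$), after which $u^*\leq\Phi$ follows from continuity of $\Phi$; this is a trivial reordering and does not affect the substance.
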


\begin{proof}
The proof is the same as \cite[Lemma 4.15]{FR20}, except that the test function $\phi\in C^2$ is chosen such that $u-\phi$ attains its \emph{global} maximum in $\overline{\Omega}$. 
\end{proof}

\begin{lem}\label{lem:Perron-bdry}
The USC envelope of $u$ defined by \eqref{eq:Perron-sup} satisfies the boundary condition, i.e.
\[
u^*|_{\partial\Omega}=g\in C(\partial\Omega).
\]
\end{lem}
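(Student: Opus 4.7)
The plan is, for each $z_0 \in \partial\Omega$ and $\epsilon > 0$, to build an upper barrier forcing $u^*(z_0) \leq g(z_0) + \epsilon$ and a lower barrier in $\cA$ yielding $u^*(z_0) \geq g(z_0) - \epsilon$; sending $\epsilon \to 0$ then gives $u^*(z_0) = g(z_0)$.

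For the upper bound, pick $\rho_\epsilon \in (0, r_0)$ with $g \leq g(z_0) + \epsilon$ on $\partial\Omega \cap B_{\rho_\epsilon}(z_0)$, and let $\varphi^{(z_0)}$ be the barrier of \Cref{prop:barrier}. Consider
\[
\bar\phi(x) := g(z_0) + \epsilon + K\varphi^{(z_0)}(x), \qquad x \in \overline{\Omega \cap B_{2r_0}(z_0)},
\]
with $K$ chosen large enough (depending on $\epsilon$, $\|f\|_\infty$, the a priori upper bound $M$ on $\cA$ from \Cref{lem:Perron-int}, and $\sup g$) so that: (i) $K \geq \|f\|_\infty$, making $\bar\phi$ a classical super-solution in $G := \Omega \cap B_{r_0}(z_0)$; (ii) $\bar\phi \geq g$ on $\partial\Omega \cap \overline{B_{2r_0}(z_0)}$, splitting this set into the near-region $B_{\rho_\epsilon}(z_0)$ (handled by the constant term) and the far-region, where $\varphi^{(z_0)}$ is uniformly positive on $\partial\Omega$ away from $z_0$ by its explicit construction; and (iii) $\bar\phi \geq M$ on $\Omega \cap (\overline{B_{2r_0}(z_0)} \setminus B_{r_0}(z_0))$, using $\varphi^{(z_0)} \geq 1$ there. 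Since $G_* \subset \Omega \cap B_{2r_0}(z_0)$, every $v \in \cA$ satisfies $v \leq \bar\phi$ on $\overline{G_*} \setminus G$, and the mid-range comparison \Cref{cor:comp-mid} yields $v \leq \bar\phi$ in $G$. Taking supremum over $v \in \cA$ and $\limsup$ as $x \to z_0$ gives $u^*(z_0) \leq g(z_0) + \epsilon$.

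For the lower bound, I patch the local barrier with a global sub-solution. Set $\psi(x) := \underline g - \|f\|_\infty \varphi^{(1)}(x)$, where $\underline g := \min_{\partial\Omega} g$ and $\varphi^{(1)}$ is from \Cref{lem:super-global}: then $\cL_\Omega \psi = -\|f\|_\infty \leq f$ globally and $\psi \leq \underline g \leq g$ on $\partial\Omega$. Let $\tilde h(x) := g(z_0) - \epsilon - K\varphi^{(z_0)}(x)$ on $\overline{\Omega \cap B_{2r_0}(z_0)}$ and define
\[
\underline\phi(x) := \begin{cases}
\max\{\tilde h(x),\,\psi(x)\}, & x \in \overline{\Omega \cap B_{2r_0}(z_0)},\\
\psi(x), & x \in \overline\Omega \setminus B_{2r_0}(z_0).
\end{cases}
\]
For $K$ large (with constraints analogous to the upper-bound case, plus $K \gg g(z_0) - \underline g + \|f\|_\infty \|\varphi^{(1)}\|_\infty$), one has $\tilde h < \psi$ on the annulus $\Omega \cap (\overline{B_{2r_0}(z_0)} \setminus B_{r_0}(z_0))$, so that $\underline\phi$ is continuous on $\overline\Omega$ and coincides with $\psi$ outside $B_{r_0}(z_0)$. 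As the pointwise maximum of two viscosity sub-solutions, $\underline\phi$ is itself a viscosity sub-solution of $\cL_\Omega \underline\phi \leq f$; together with $\underline\phi \leq g$ on $\partial\Omega$, this gives $\underline\phi \in \cA$, hence $u \geq \underline\phi$. Since $\underline\phi(z_0) = g(z_0) - \epsilon$, continuity of $\underline\phi$ at $z_0$ yields $u^*(z_0) \geq \liminf_{x \to z_0} u(x) \geq g(z_0) - \epsilon$.

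The main difficulty is the lower bound: whereas the upper bound is handled purely locally through the mid-range comparison \Cref{cor:comp-mid}, the lower-bound barrier must belong to the globally defined class $\cA$, but $\varphi^{(z_0)}$ lives only in $\overline{\Omega \cap B_{2r_0}(z_0)}$. The maximum-of-sub-solutions trick, exploiting that $\tilde h$ becomes arbitrarily negative on the transition annulus for large $K$, is what lets us glue $\tilde h$ to a crude global sub-solution such as $\psi$ while preserving continuity and admissibility in $\cA$.
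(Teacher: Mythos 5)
Your upper bound matches the paper's: the barrier $g(z_0)+\eps+K\varphi^{(z_0)}$ is a classical super-solution that dominates every $v\in\cA$ on $\overline{G_*}\setminus G$, so the mid-range comparison principle (\Cref{cor:comp-mid}) yields $v\leq\bar\phi$ in $G$; taking the supremum and letting $x\to z_0$ gives $u^*(z_0)\leq g(z_0)+\eps$. For the lower bound, however, you take a genuinely different route, and your version is the more careful one. The paper obtains the two-sided bound $w_\eps^-\leq u^*\leq w_\eps^+$ by applying \Cref{cor:comp-mid} symmetrically, but at this point in the argument $u^*$ is only known (from \Cref{lem:Perron-int}) to be a USC \emph{sub-solution}, not an LSC super-solution, so \Cref{cor:comp-mid} does not apply in the direction $w_\eps^-\leq u^*$; the paper is implicitly relying on the classical Perron mechanism that you spell out explicitly. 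Your construction---gluing the local barrier $\tilde h=g(z_0)-\eps-K\varphi^{(z_0)}$ to the global crude sub-solution $\psi=\underline g-\|f\|_\infty\varphi^{(1)}\in\cA$ via $\max(\tilde h,\psi)$, with $K$ large enough that $\tilde h<\psi$ on the transition annulus $\Omega\cap(\overline{B_{2r_0}}\setminus B_{r_0})$---produces an actual element of $\cA$ whose value at $z_0$ is at least $g(z_0)-\eps$, which is exactly what Perron's method requires, and it buys you a proof that does not presuppose any super-solution property of $u^*$. Two points to tighten. First, the max-of-sub-solutions step you invoke is precisely \Cref{lem:max-visc}, which in the paper is stated only \emph{after} this lemma; its proof is independent of \Cref{lem:Perron-bdry}, so you may cite it, but it is cleaner to reproduce its one-line argument: a test function touching $\max(\tilde h,\psi)$ from above at a point $x$ where the max is $\tilde h(x)>\psi(x)$ lies above $\tilde h$ on the entire interaction ball $B_{d(x)}(x)\subset\Omega\cap B_{2r_0}(z_0)$ (since $d(x)\leq|x-z_0|<r_0$), so the pointwise inequality $\cL_\Omega\phi(x)\leq\cL_\Omega\tilde h(x)\leq f(x)$ holds. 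Second, to apply \Cref{lem:max-visc} verbatim one needs $\tilde h\in L^\infty(\overline\Omega)$, so you should extend $\tilde h$ (e.g.\ by a constant strictly below $\inf\psi$) to all of $\overline\Omega$ while keeping $\tilde h<\psi$ outside $B_{r_0}(z_0)$; the extension is invisible to the computation just described, for the reason above.
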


\begin{proof}
The proof is similar to \cite[Proof of Theorem 4.17, Step 1]{FR20}, but a mid-range comparison is to be employed. Indeed, for each $x_0\in\partial\Omega$, let $r_0$ be as in \Cref{prop:barrier} and define the barrier
\begin{align*}
w_{\eps}^{\pm}
:=g(x_0)\pm(\eps+k_\eps\varphi^{(x_0)})
	\qquad \text{ in } \overline{B_{2r_0}(x_0)}.
\end{align*}
where $k_\eps$, depending not only on $\eps$ but also on $g$, $\Omega$ and $\sup_{\overline{\Omega}}{u^*}$, is chosen such that
\[
w_{\eps}^- \leq u^* \leq w_{\eps}^+
	\qquad \text{ in } \overline{B_{2r_0}(x_0)} \setminus B_{r_0}(x_0) \supset \overline{B_{r_0}(x_0)_*}.
\]
By \Cref{prop:barrier},
\[
\cL_{\Omega}w_{\eps}^- = -k_\eps 
\leq \cL_{\Omega} u^* 
\leq k_\eps=\cL_{\Omega}w_{\eps}^+
	\qquad \text{ in } B_{r_0}(x_0).
\]
By \Cref{cor:comp-mid},
\[
w_{\eps}^- \leq u^* \leq w_{\eps}^+
\qquad \text{ in } B_{r_0}(x_0).
\]
In particular, since $\varphi^{(x_0)} \in C(\overline{\Omega})$, there exists $\delta(\eps)>0$ such that
\[
g(x_0)-2\eps \leq u^* \leq g(x_0)+2\eps 
	\qquad \text{ in } B_{\delta(\eps)}(x_0).
\]
This implies $\lim_{x_k\to x_0} u^*(x_k)=g(x_0)$ and hence $u^*|_{\partial\Omega}=g \in C(\partial\Omega)$.
\end{proof}

\begin{lem}\label{lem:Perron-USC}
It holds that $u=u^* \in USC(\overline{\Omega})$.
\end{lem}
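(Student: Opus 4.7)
The plan is to show that $u^*$ itself lies in the admissible class $\cA$. Once this is established, the definition of $u$ as the pointwise supremum over $\cA$ immediately gives $u^* \leq u$. Combined with the always-valid pointwise bound $u \leq u^*$ (from the definition of the USC envelope), this forces the equality $u = u^*$, and then $u \in USC(\overline{\Omega})$ follows for free since $u^* \in USC(\overline{\Omega})$ by construction.

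So the task reduces to verifying the three defining properties of $\cA$ for $u^*$. The upper semi-continuity is built into the envelope. The interior sub-solution property $\cL_{\Omega} u^* \leq f$ in $\Omega$ is exactly the content of \Cref{lem:Perron-int}, which I would simply quote. The boundary inequality $u^* \leq g$ on $\partial \Omega$ is in fact the equality $u^*|_{\partial \Omega} = g$ proved in \Cref{lem:Perron-bdry}. These three facts together show $u^* \in \cA$, hence $u^* \leq u$, completing the argument.

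I do not expect any serious obstacle, since the two lemmas just before do all the work; this step is the clean wrap-up of Perron's method. The only mild subtlety is keeping straight the direction of the comparison: $u \leq u^*$ is a general fact about envelopes, while $u^* \leq u$ requires the non-trivial information that $u^*$ is itself admissible. It is this second inequality that uses \Cref{lem:Perron-int,lem:Perron-bdry} in an essential way.
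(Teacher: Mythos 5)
Your proposal is correct and matches the paper's proof essentially verbatim: both deduce $u^*\in\cA$ from \Cref{lem:Perron-int} and \Cref{lem:Perron-bdry}, whence $u^*\le u$, and combine this with the trivial bound $u\le u^*$ to conclude $u=u^*\in USC(\overline\Omega)$.
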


\begin{proof}
By definition, $u\leq u^*$. By \Cref{lem:Perron-int} and \Cref{lem:Perron-bdry}, $u^*\in \cA$, so $u^* \leq u$.
\end{proof}

\begin{lem}\label{lem:max-visc}
Let $G$ be an open subset of $\Omega$. Suppose $u\in USC(\overline{\Omega})$ satisfies
\[
\cL_{\Omega}u \leq f
	\qquad \text{ in } \Omega,
\]
in the viscosity sense, and $v\in C^2(G) \cap L^\infty(\overline{\Omega})$ satisfies pointwise
\[
\begin{cases}
\cL_{\Omega}v \leq f
	& \text{ in } G,\\
v \leq u
	& \text{ in } \overline{\Omega}\setminus G.
\end{cases}
\]
Then, the maximum $w=u\vee v$ is also a sub-solution in $\Omega$.
\end{lem}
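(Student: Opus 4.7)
The plan is to adapt the classical argument that the maximum of two viscosity sub-solutions remains a sub-solution, using the symmetric representation of $\cL_\Omega$ to handle the nonlocal comparison. The semicontinuity $w=u\vee v\in USC(\overline{\Omega})$ follows routinely: on the open set $G$, $u$ is USC by hypothesis and $v$ is continuous, so their max is USC there; on $\overline{\Omega}\setminus G$ the assumption $v\leq u$ forces $w=u\in USC$, and the same inequality controls the upper limit as $\partial G$ is approached from within $G$.

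To verify the sub-solution condition, I would take any $x_0\in\Omega$, a neighborhood $N_{x_0}$, and a test function $\varphi\in C^2(N_{x_0})\cap L^1(\overline{\Omega})$ with $w(x_0)=\varphi(x_0)$ and $w\leq\varphi$ on $\overline{\Omega}$, and split according to which of $u(x_0),v(x_0)$ realizes the maximum. If $w(x_0)=u(x_0)$, then $u\leq w\leq\varphi$ with equality at $x_0$, making $\varphi$ an admissible test function for $u$ from above, so the viscosity sub-solution property of $u$ gives $\cL_\Omega\varphi(x_0)\leq f(x_0)$ directly. If instead $w(x_0)=v(x_0)>u(x_0)$, then the hypothesis $v\leq u$ on $\overline{\Omega}\setminus G$ forces $x_0\in G$, so $v$ is $C^2$ near $x_0$ and $\cL_\Omega v(x_0)\leq f(x_0)$ holds pointwise by assumption.

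In this second case the task reduces to comparing $\cL_\Omega\varphi(x_0)$ with $\cL_\Omega v(x_0)$. I would use the symmetric form
\[
\cL_\Omega\phi(x_0)=\frac{C_{n,s}}{2}d(x_0)^{2s-2}\int_{B_{d(x_0)}}\frac{2\phi(x_0)-\phi(x_0+y)-\phi(x_0-y)}{|y|^{n+2s}}\,dy,
\]
which converges absolutely, without any principal value, whenever $\phi$ is $C^2$ near $x_0$ and bounded on $\overline{\Omega}$, and is therefore well defined for both $\phi=\varphi$ and $\phi=v$. Combined with the pointwise inequality $2\varphi(x_0)-\varphi(x_0\pm y)\leq 2v(x_0)-v(x_0\pm y)$ coming from $\varphi\geq v$ on $\overline{\Omega}$ with equality at $x_0$, this yields $\cL_\Omega\varphi(x_0)\leq\cL_\Omega v(x_0)\leq f(x_0)$. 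The main step where care is needed is precisely this kernel-level comparison; once the symmetric representation is invoked, the argument becomes a direct adaptation of the classical local case, and the varying horizon causes no additional difficulty since the integration domain at $x_0$ is fixed once and for all by $d(x_0)$.
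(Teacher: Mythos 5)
Your proof is correct and takes essentially the same route as the paper's: a case split on whether $w(x_0)=u(x_0)$ or $w(x_0)=v(x_0)>u(x_0)$, with the first case handled by the viscosity sub-solution property of $u$ and the second by the pointwise comparison $\cL_\Omega\varphi(x_0)\leq\cL_\Omega v(x_0)\leq f(x_0)$. The only difference is one of exposition: the paper states the kernel-level comparison in a single phrase (``the pointwise computation also gives $\cL_\Omega\varphi(x_0)\leq\cL_\Omega v\leq f$''), whereas you spell out the symmetric representation and the monotonicity of the second-difference quotient under $\varphi\geq v$ with equality at $x_0$, which is a sound and welcome elaboration.
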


\begin{proof}
Suppose $x\in N_x\subset\Omega$ and $\phi\in C^2(N_x)\cap L^1(\Omega)$ is such that $w(x)=\phi(x)$ and $w\leq \phi$ in $\overline{\Omega}$. We want to show that $\cL_{\Omega}\phi(x) \leq f(x)$. If $w(x)=u(x)$, then since $u\leq w\leq \phi$, the result follows from the fact that $u$ is a viscosity sub-solution. If $w(x)=v(x) \neq u(x)$, then $x\in G$ and (using $v\leq w\leq \phi$) the pointwise computation also gives $\cL_{\Omega}\phi(x) \leq \cL_{\Omega}v \leq f$, as desired.
\end{proof}

\begin{lem}\label{lem:Perron-LSC-envelope}
The LSC envelope of $u$ defined by \eqref{eq:Perron-sup} is a super-solution in the interior, i.e.
\[
\cL_{\Omega}u_* \geq f
	\qquad \text{ in } \Omega.
\]
\end{lem}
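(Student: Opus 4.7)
The plan is a proof by contradiction following the classical Perron argument, adapted to the nonlocal, mid-range setting enabled by \Cref{lem:max-visc}. Suppose, for contradiction, that $u_*$ fails to be a super-solution at some $x_0\in\Omega$. Then there exist a neighborhood $N_{x_0}\subset\Omega$ and $\phi\in C^2(N_{x_0})\cap L^1(\overline{\Omega})$ with $u_*\geq \phi$ in $\overline{\Omega}$, $u_*(x_0)=\phi(x_0)$, and
\[
\cL_{\Omega}\phi(x_0)<f(x_0).
\]
First I would upgrade this to strict contact: replacing $\phi(x)$ by $\phi(x)-\epsilon|x-x_0|^4$ preserves the touching value, the $C^2$-regularity on $N_{x_0}$, and the $L^1$-integrability, and for $\epsilon>0$ small enough it keeps the strict inequality at $x_0$ (since $\cL_{\Omega}|x-x_0|^4(x_0)$ is a finite quantity). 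After this modification, $u_*-\phi$ is LSC, non-negative, and strictly positive on $\overline{\Omega}\setminus\set{x_0}$.

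Next I would localize the inequality. Using the $C^2$-regularity of $\phi$ on $N_{x_0}$ and $\phi\in L^1(\overline{\Omega})$ (so dominated convergence controls the tail), the map $x\mapsto \cL_{\Omega}\phi(x)$ is continuous on $N_{x_0}$. Combined with the continuity of $f$, there is $r>0$ with $\overline{B_r(x_0)}\subset\Omega\cap N_{x_0}$ and
\[
\cL_{\Omega}\phi<f\qquad\text{pointwise on }B_r(x_0).
\]
By compactness of $\overline{\Omega}\setminus B_r(x_0)$ and lower semicontinuity of $u_*-\phi$, there exists $\eta_0>0$ with
\[
u_*-\phi\geq \eta_0\qquad\text{in }\overline{\Omega}\setminus B_r(x_0).
\]
Fix $\eta\in(0,\eta_0)$ and set $w:=\phi+\eta$. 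Since $\cL_{\Omega}$ annihilates constants, $\cL_{\Omega}w=\cL_{\Omega}\phi<f$ in $B_r(x_0)$, and $w=\phi+\eta\leq u_*\leq u$ in $\overline{\Omega}\setminus B_r(x_0)$.

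Finally I would glue. Applying \Cref{lem:max-visc} with $G=B_r(x_0)$ to $u\in\cA$ and $w$ (truncating $w$ from above outside $B_r(x_0)$ if needed to obtain $w\in L^\infty(\overline\Omega)$, while preserving $w\leq u$ there) yields that $v:=u\vee w\in USC(\overline{\Omega})$ is a viscosity sub-solution in $\Omega$. Since $w\leq u\leq g$ on $\partial\Omega$, also $v\leq g$ on $\partial\Omega$, so $v\in\cA$. Now choose $x_k\to x_0$ with $u(x_k)\to u_*(x_0)=\phi(x_0)$, which exists by definition of $u_*$. For $k$ large, $x_k\in B_r(x_0)$, so
\[
v(x_k)\geq w(x_k)=\phi(x_k)+\eta\to\phi(x_0)+\eta>u_*(x_0)=\lim_k u(x_k),
\]
whence $v(x_k)>u(x_k)$ for large $k$, contradicting the defining property $v\leq u$ from \eqref{eq:Perron-sup}.

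I expect the main obstacle to be the careful bookkeeping around the nonlocal ``exterior'': unlike in the local Perron argument, \Cref{lem:max-visc} requires $w\leq u$ on the entire set $\overline{\Omega}\setminus G$, not merely on $\partial G$, and this is precisely what the strict-contact reduction (via the $|x-x_0|^4$-perturbation together with lower semicontinuity of $u_*$) delivers. The continuity of $x\mapsto \cL_{\Omega}\phi(x)$ on $N_{x_0}$ needed to propagate the strict inequality to $B_r(x_0)$ is routine but must be justified via dominated convergence using the $L^1$-tail control of $\phi$.
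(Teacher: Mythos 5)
Your proof is correct and takes essentially the same route as the paper: a contradiction via strict-contact perturbation of the test function, localization of the strict inequality $\cL_\Omega\phi<f$ by continuity, gluing with \Cref{lem:max-visc} to produce a better element of $\cA$, and concluding from the defining supremum. The only cosmetic differences are the quartic (vs.\ the paper's quadratic) perturbation and your explicit sequence $x_k\to x_0$ realizing $u_*(x_0)$, which makes the final contradiction more transparent than the paper's terse phrasing.
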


\begin{proof}
If $u_*$ is not a super-solution in $\Omega$, then there exists $x\in N_x\subset\Omega$, $\varphi\in C^2(N_x)\cap L^1(\overline{\Omega})$, such that $u(x)=\varphi(x)$, $u_*\geq \varphi$ in $\overline{\Omega}$, while $\cL_{\Omega}\varphi(x)<f(x)$. By replacing $\varphi$ by $\tilde\varphi=\varphi-\eps|\cdot-x|^2$ if necessary (where $\eps$ depends on $\varphi$ and $f$), we can assume that $u_*>\varphi$ in $\overline{\Omega}\setminus\set{x}$. By the continuity of $\cL_{\Omega}\varphi$ and $f$ at $x$, there exist $\delta,\rho>0$ such that
\[
\varphi+\delta<u_*\leq u
	\quad \text{ in } \overline{\Omega}\setminus B_{\rho}(x),
	\quad \text{ and } \quad
\cL_{\Omega}\varphi < f
	\text{ in } B_{\rho}(x).
\]
Now, define $u_\delta=u\vee (\varphi+\delta)$, which is a sub-solution in $\Omega$ due to \Cref{lem:max-visc}. Now $u_\delta\in\cA$ and so $u_\delta \leq u$. But this means that $\varphi+\delta \leq u$ in all of $\overline{\Omega}$ including $x_0$, a contradiction.

Suppose, on the contrary, that $u_*$ is not a super-solution in $\Omega$. Then there exists $x_0\in\Omega$ and $\varphi\in C(\overline{\Omega}) \cap C^2(B_{d_{\Omega}(x_0)}(x_0))$ such that $u_*(x_0)=\varphi(x_0)$, $u_* \geq \varphi$ in $\overline{\Omega}$, while $\cL_{\Omega}\varphi(x_0)<f(x_0)$. By replacing $\varphi$ by $\tilde{\varphi}=\varphi-\eps|x-x_0|^2$ if necessary, where $\eps$ depends on $\varphi$ and $f$, we can assume that $u_*>\varphi$ in $\overline{\Omega} \setminus \set{x_0}$. By continuity of $\varphi$, $f$ and $\cL_{\Omega}\varphi$ at $x_0$, there exists $\delta,\rho>0$ such that
\[
\varphi+\delta < u_* \leq u
	\text{ in } \overline{\Omega}\setminus B_{\rho}(x_0) \supset \overline{B_\rho(x_0)_*}\setminus B_{\rho}(x_0),
	\quad \text{ and } \quad
\cL_{\Omega}\varphi < f
	\text{ in } B_{\rho}(x_0).
\]
Now, define $u_\delta=u \vee (\varphi+\delta)$, which is a sub-solution in $\Omega$ due to \Cref{lem:max-visc}. Now $u_\delta\in\cA$ and so $u_\delta \leq u$. But this means that $\varphi+\delta \leq u$ in all of $\overline{\Omega}$ including $x_0$, a contradiction.
\end{proof}

\begin{lem}\label{lem:Perron-LSC}
It holds that $u=u_*\in LSC(\overline{\Omega})$.
\end{lem}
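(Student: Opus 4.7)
The plan is to deduce $u = u_*$ from a comparison argument, once we know that the boundary trace of $u_*$ equals $g$. Combined with the trivial chain $u_* \leq u \leq u^* = u$ (using \Cref{lem:Perron-USC}), this will force $u = u_* = u^*$; since $u_*\in LSC(\overline{\Omega})$ holds by definition, the claim follows and yields $u\in C(\overline{\Omega})$.

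First I would establish $u_*|_{\partial\Omega} = g$. From \Cref{lem:Perron-bdry}, $u_* \leq u^* = g$ on $\partial\Omega$, so only $u_* \geq g$ on $\partial\Omega$ remains to be shown. Fix $x_0 \in \partial\Omega$ and $\eps > 0$. The strategy is to produce an admissible sub-solution $v_\eps \in \cA$ that is continuous at $x_0$ with $v_\eps(x_0) = g(x_0) - \eps$: then $u \geq v_\eps$ gives $u_*(x_0) \geq g(x_0) - \eps$, and letting $\eps \to 0$ yields $u_* \geq g$ on $\partial\Omega$. For $v_\eps$, I would form the maximum of a globally admissible classical sub-solution $v_0 := \min_{\partial\Omega} g - \norm[L^\infty(\Omega)]{f}\,\varphi^{(1)}$ (admissible by \Cref{lem:super-global}) and the local lower barrier $w_\eps^- := g(x_0) - \eps - k_\eps \varphi^{(x_0)}$ on $\overline{B_{2r_0}(x_0)}$ from the proof of \Cref{lem:Perron-bdry}, where $k_\eps$ is chosen large enough that $\cL_\Omega w_\eps^- \leq -k_\eps \leq f$ in $B_{r_0}(x_0)$ and, by continuity of $g$ after shrinking $r_0$ if necessary, $w_\eps^- \leq g$ on $\partial\Omega \cap \overline{B_{2r_0}(x_0)}$. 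Setting $v_\eps := v_0 \vee w_\eps^-$ in $\overline{B_{r_0}(x_0)\cap\Omega}$ and $v_\eps := v_0$ elsewhere in $\overline{\Omega}$ (with $k_\eps$ further enlarged so that $w_\eps^- \leq v_0$ on $\partial B_{r_0}(x_0) \cap \Omega$ for continuity of the gluing), \Cref{lem:max-visc} ensures $v_\eps$ is a viscosity sub-solution in $\Omega$ with $v_\eps \leq g$ on $\partial\Omega$, hence $v_\eps \in \cA$.

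With $u^*|_{\partial\Omega} = u_*|_{\partial\Omega} = g$ in hand, I would apply the mid-range comparison principle \Cref{cor:comp-mid} with $G = \Omega$ to the pair $u^* \in USC(\overline{\Omega})$ (a viscosity sub-solution by \Cref{lem:Perron-int}) and $u_* \in LSC(\overline{\Omega})$ (a viscosity super-solution by \Cref{lem:Perron-LSC-envelope}), obtaining $u^* \leq u_*$ in $\Omega$. Combined with the tautological $u_* \leq u^*$, this yields $u = u^* = u_*$ throughout $\overline{\Omega}$, and in particular $u = u_* \in LSC(\overline{\Omega})$.

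The main obstacle lies in the gluing step: one must tune $k_\eps$ so that $w_\eps^- \leq v_0$ on the annular ring $\partial B_{r_0}(x_0) \cap \Omega$ while still preserving $v_\eps(x_0) = g(x_0) - \eps$ and the inequality $w_\eps^- \leq g$ on the portion of $\partial\Omega$ inside $B_{2r_0}(x_0)$. The viscosity sub-solution property of the glued function is guaranteed by \Cref{lem:max-visc}, which is designed precisely to handle such max-gluings between a global viscosity sub-solution and a local classical sub-solution.
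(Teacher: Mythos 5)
Your route differs from the paper's in a worthwhile way: the paper's own proof of \Cref{lem:Perron-LSC} is two sentences long, invoking \Cref{lem:Perron-bdry} and \Cref{lem:Perron-LSC-envelope} and then appealing to the mid-range comparison principle. That argument quietly requires $u_*\geq g$ on $\partial\Omega$, which is not literally the statement of \Cref{lem:Perron-bdry} (that lemma asserts $u^*|_{\partial\Omega}=g$, and the lower bound $w_\eps^-\leq u^*$ in its proof is not obviously obtained by the comparison corollary, since $u^*$ is only known to be a \emph{sub}-solution at that stage). Your proposal instead produces an explicit admissible $v_\eps\in\cA$ continuous at $x_0$ with $v_\eps(x_0)\geq g(x_0)-\eps$ and deduces the lower trace from $u\geq v_\eps$; this is the standard Perron-barrier mechanism (cf.\ the proof in \cite{FR20}) and in fact supplies the justification the paper leaves implicit. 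In that sense your proof is a legitimate alternative that makes the logical dependency transparent.

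There is, however, a technical snag in the gluing step that you do not fully address. When you invoke \Cref{lem:max-visc} with $G=B_{r_0}(x_0)\cap\Omega$, its hypothesis demands $w_\eps^-\leq v_0$ on all of $\overline{\Omega}\setminus G$, and this set includes $\partial\Omega\cap B_{r_0}(x_0)$. There $\varphi^{(x_0)}$ is small, so $w_\eps^-\approx g(x_0)-\eps$, while $v_0=\min_{\partial\Omega}g-\norm[L^\infty(\Omega)]{f}\varphi^{(1)}$ can be strictly smaller than $g(x_0)-\eps$ no matter how large $k_\eps$ is taken; increasing $k_\eps$ only pushes $w_\eps^-$ down \emph{away} from $x_0$, and it equals $g(x_0)-\eps$ at $x_0$ by design. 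So the stated hypothesis of \Cref{lem:max-visc} fails on $\partial\Omega\cap B_{r_0}(x_0)$, and you only flag the annular ring $\partial B_{r_0}(x_0)\cap\Omega$ as the obstacle. The cleanest fix is to replace $G$ by the open set $\set{x\in B_{r_0}(x_0)\cap\Omega: w_\eps^-(x)>v_0(x)}$: then $w_\eps^-\leq v_0$ on $\overline{\Omega}\setminus G$ automatically, the pointwise inequality $\cL_\Omega w_\eps^-\leq f$ still holds on $G\subset B_{r_0}(x_0)\cap\Omega$, and $v_\eps=v_0\vee w_\eps^-$ is unchanged; the Dirichlet admissibility $v_\eps\leq g$ on $\partial\Omega$ continues to follow from the continuity of $g$ after shrinking $r_0$. (Alternatively, one can observe that the proof of \Cref{lem:max-visc} only uses the hypothesis $v\leq u$ on $\Omega\setminus G$, never on $\partial\Omega$, because the test point and the domain of interaction $B_{d_\Omega(x)}(x)$ lie in the interior.) With either adjustment your argument closes.
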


\begin{proof}
By definition $u_*\leq u$. In view of \Cref{lem:Perron-bdry} and \Cref{lem:Perron-LSC-envelope}, comparing $u_*$ to $u$ via \Cref{cor:comp-mid} gives $u\leq u_*$.
\end{proof}

\begin{proof}[Proof of \Cref{prop:Perron}]
By \Cref{lem:Perron-bdry}, \Cref{lem:Perron-USC} and \Cref{lem:Perron-LSC}, $u\in C(\overline{\Omega})$ is both a sub- and super-solution, and $u=g$ on $\partial\Omega$.
\end{proof}

\subsection{Regularity}

Since it suffices to obtain qualitative interior regularity, we compare to the (restricted) fractional Laplacian in $\R^n$ as in \Cref{lem:int-est}, and invoke the ccorresponding regualrity results in \cite[Chapter 3]{FR-book-s}.

\begin{lem}\label{lem:int-est-visc}
Let $u \in C(\overline{\Omega})$ be as in \Cref{prop:Perron}, with $f\in C^\alpha(\Omega)$. Then $u\in C^{2s+\alpha}(\Omega)$.
\end{lem}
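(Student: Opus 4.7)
The plan is to localize the equation, rewrite it in terms of the restricted fractional Laplacian $\Ds$, and then bootstrap using interior regularity theory for $\Ds$ together with the classical estimates already contained in \Cref{lem:int-est}.

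First I would fix $x_0 \in \Omega$ and a ball $B_r(x_0) \subset\subset \Omega$ on which $d \geq d_0 > 0$. Mimicking the splitting used in the proof of \Cref{lem:int-est}, the equation $\cL_\Omega u = f$ can be recast as
\[
\Ds u = g[u] \quad \Textin B_r(x_0),
\]
where
\[
g[u](x) = \frac{c_{n,s}}{C_{n,s}}\left( C_{n,s}\int_{B_{d(x)}(x)^c} \frac{u(x)-u(x+y)}{|y|^{n+2s}}\,dy + d(x)^{2-2s} f(x) \right).
\]
This equivalence holds in the viscosity sense as well: a $C^2$ function $\varphi$ touching $u$ from above (resp.\ below) at an interior point $x$ needs only agree with $u$ inside $B_{d(x)}(x)$, and the tail contribution is by definition evaluated using $u$ itself, so testing produces matching inequalities for both operators.

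Next I would run the bootstrap. Since $u \in C(\overline{\Omega}) \cap L^\infty$ and $f \in C^\alpha$, the function $g[u]$ is bounded and continuous on $B_{r/2}(x_0)$. Standard viscosity regularity theory for $\Ds$ (see e.g.\ \cite[Chapter 3]{FR-book-s}) then yields $u \in C^{\sigma}_{\loc}(B_{r/2}(x_0))$ for some $\sigma>0$. Once $u$ is H\"{o}lder continuous, the Hölder estimates on the tail integral carried out in the proof of \Cref{lem:int-est} give $g[u] \in C^{\sigma\wedge\alpha}_{\loc}$, and Schauder-type estimates for $\Ds$ then upgrade $u$ to $C^{2s + (\sigma\wedge\alpha)}_{\loc}$. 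At this stage $u$ lies in $C^{2s+}(\Omega)$, hence is a classical solution, and one may apply the higher-order part of \Cref{lem:int-est} directly. Iterating finitely many times, gaining a fixed amount of H\"{o}lder regularity at each step while avoiding integer values of $2s+\beta$, I reach the target regularity $u \in C^{2s+\alpha}_{\loc}(\Omega)$.

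The main obstacle is the first step, namely the rigorous passage between viscosity solutions of $\cL_\Omega u = f$ and of $\Ds u = g[u]$ when $u$ is only continuous. One must check that $g[u]$ is continuous in $x$ (so that it is an admissible datum) and that the standard viscosity computation proceeds correctly, which requires replacing the smooth test function by $u$ itself outside the ball $B_{d(x)}(x)$ before applying $\Ds$. Once this reduction is carefully justified, the remaining bootstrap is routine in view of the interior estimates already proved.
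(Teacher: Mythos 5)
Your proposal is correct and follows essentially the same route as the paper: recast $\cL_\Omega u=f$ as a viscosity equation $\Ds u = g[u]$ (the paper's $F[u]$), check that the viscosity testing inequalities transfer, and then bootstrap using interior regularity for $\Ds$ together with the tail estimates from \Cref{lem:int-est}. The only slight difference is cosmetic: the paper verifies the translation of viscosity inequalities using the global-touching definition with test functions in $C^2(N_x)\cap L^1(\overline{\Omega})$ and then simply cites the bootstrap, whereas you localize to a ball, appeal to the truncated-test-function form of the $\Ds$ viscosity definition, and spell out the iteration more explicitly; both formulations lead to the same argument.
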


\begin{proof}
We verify that $u$, when extended continuously to a bounded function with compact support outside $\Omega$, is a viscosity solution to
\begin{equation}\label{eq:extended-1}
\Ds u=F[u](x)
    \quad \Textin \Omega,
\end{equation}
where $u$ is  
and
\begin{equation}\label{eq:extended-2}
F[u](x)
=
\dfrac{
    c_{n,s}
}{
    C_{n,s}
}
\left(
    C_{n,s}\int_{B_{d(x)}^c}
        \dfrac{
            u(x)-u(x+y)
        }{
            |y|^{n+2s}
        }
    \,dy
    +f(x)d(x)^{2-2s}
\right).
\end{equation}
Recalling the definition of viscosity solution in \cite[Chapter 3]{FR-book-s}, suppose $x\in N_x\subset \Omega$ and $\phi\in L^1_{2s}(\R^n) \cap C^2(N_x)$ is such that
\[
u(x)=\phi(x)
	\quad \text{ and } \quad
u\leq \phi
	\text{ in } \R^n.
\]
In particular, $\phi\in L^1(\Omega)$ and $u\leq \phi$ in $\overline{\Omega}$. By \Cref{def:visc-mid}, $\cL_{\Omega}\phi(x)\leq f(x)$. This pointwise inequality rearranges to $\Ds \phi \leq F[\phi](x)$. Hence, $u$ is a viscosity sub-solution to \eqref{eq:extended-1}--\eqref{eq:extended-2}. Similarly, $u$ is also a viscosity super-solution. By bootstrapping the regularity result in \cite[Chapter 3]{FR-book-s} (recall that $F[u]$ is as regular as $u$, as in the proof of \Cref{lem:int-est}), $u\in C^{2s+\alpha}(\Omega)$.
\end{proof}

\begin{proof}[Proof of \Cref{prop:exist}]
By \Cref{prop:Perron}, there exists a viscosity solution $u\in C(\overline{\Omega})$. By \Cref{lem:int-est-visc}, $u\in C^{2s+\alpha}(\Omega)$, so it is also a classical solution.
\end{proof}

\begin{proof}[Proof of \Cref{thm:main-exist}]
It follows immediately from  \Cref{prop:exist} and \Cref{thm:main}.
\end{proof}

\appendix

\section{Basic properties of $\cL_\Omega$}

We show that $\cL_\Omega$ reduces to the classical Laplace operator at the boundary, with the choice of the normalization $C_{n,s}d(x)^{2-2s}$.

\begin{lem}[Limit operator]
\label{lem:lim-op}
If $u\in C^{2,\beta}(\Omega)$ for some $\beta>0$, then
\[
\cL_\Omega u(x)
\to
-\Delta u(x),
    \quad \textas x\to\p\Omega.
\]
\end{lem}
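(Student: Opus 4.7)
The plan is to perform a second order Taylor expansion of $u$ at $x$ inside the ball of integration $B_{d(x)}$, identify the main term as $-\Delta u(x)$ thanks to the choice of the normalization $C_{n,s}$, and control the remainder by a positive power of $d(x)$.

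First, using the symmetrized form of the operator,
\[
\cL_\Omega u(x)
=\dfrac{C_{n,s}}{2}\,d(x)^{2s-2}
\int_{B_{d(x)}}
\dfrac{2u(x)-u(x+y)-u(x-y)}{|y|^{n+2s}}\,dy,
\]
I would write the Taylor expansion
\[
2u(x)-u(x+y)-u(x-y)
=-y^{T}D^{2}u(x)y+R(x,y),
\qquad
|R(x,y)|\leq [D^{2}u]_{C^{\beta}}\,|y|^{2+\beta},
\]
valid for $|y|<d(x)$ with $B_{d(x)}(x)\subset\Omega$. Here $[D^{2}u]_{C^{\beta}}$ denotes the local Hölder seminorm of $D^{2}u$ in a neighborhood of $x$.

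For the main term, by oddness in $y$ the off-diagonal integrals vanish, while by rotational symmetry of $B_{d(x)}$,
\[
\int_{B_{d(x)}}\dfrac{y_{i}y_{j}}{|y|^{n+2s}}\,dy
=\delta_{ij}\int_{B_{d(x)}}\dfrac{y_{n}^{2}}{|y|^{n+2s}}\,dy.
\]
The definition \eqref{eq:Cns} is exactly calibrated so that
\[
\dfrac{C_{n,s}}{2}\,d(x)^{2s-2}
\int_{B_{d(x)}}\dfrac{y_{n}^{2}}{|y|^{n+2s}}\,dy
=1,
\]
which, summed over the diagonal of $D^{2}u(x)$, yields the main contribution $-\Delta u(x)$.

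For the remainder I would estimate in polar coordinates,
\[
\left|\dfrac{C_{n,s}}{2}\,d(x)^{2s-2}
\int_{B_{d(x)}}\dfrac{R(x,y)}{|y|^{n+2s}}\,dy\right|
\leq C\,[D^{2}u]_{C^{\beta}}\,d(x)^{2s-2}
\int_{0}^{d(x)}r^{1+\beta-2s}\,dr
\leq C\,[D^{2}u]_{C^{\beta}}\,d(x)^{\beta},
\]
which tends to $0$ as $x\to\partial\Omega$, provided one understands $u\in C^{2,\beta}$ either up to the boundary or uniformly on a neighborhood where the seminorm is controlled. Combining the two contributions gives $\cL_\Omega u(x)\to-\Delta u(x)$.

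The only mild subtlety, rather than a genuine obstacle, is ensuring that the Hölder seminorm of $D^{2}u$ used in the remainder estimate is under control as $x$ approaches $\partial\Omega$; the statement tacitly requires this, and it is automatic if $u\in C^{2,\beta}(\overline{\Omega})$ or if the convergence is understood as $\cL_\Omega u(x)+\Delta u(x)\to0$ along sequences on which $[D^{2}u]_{C^{\beta}}$ stays bounded.
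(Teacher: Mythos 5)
Your proof is correct and takes essentially the same approach as the paper: a second-order Taylor expansion of $u$ at $x$, extraction of the main term $-\Delta u(x)$ via the rotational symmetry of the ball and the normalization \eqref{eq:Cns}, and an $O(d(x)^\beta)$ error estimate. The only cosmetic difference is that you work with the symmetrized integrand $2u(x)-u(x+y)-u(x-y)$ and appeal directly to the identity $\frac{C_{n,s}}{2}r^{2s-2}\int_{B_r}y_n^2\,|y|^{-n-2s}\,dy=1$, whereas the paper uses the non-symmetrized principal-value form and evaluates the radial integral explicitly to recover the same constant; both show the main term equals $-\Delta u(x)$ exactly, with the ``$\to$'' coming only from the remainder. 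Your final remark about the Hölder seminorm of $D^2u$ being controlled up to $\partial\Omega$ is a fair observation; the paper implicitly treats $[D^2u]_{C^\beta(\Omega)}$ as finite, which is the intended reading.
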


\begin{proof}
We compute
\[\begin{split}
\cL_{\Omega}u(x)
&=-C_{n,s}d(x)^{2s-2}
    \PV\int_{B_{d(x)}}
        \dfrac{
            \frac12D^2u(x)[y,y]
            +O\left(
                [D^2 u]_{C^\beta(\Omega)}
                |y|^{2+\beta}
            \right)
        }{
            |y|^{n+2s}
        }
    \,dy\\
&=-C_{n,s}d(x)^{2s-2}
    |\bS^{n-1}|
    \int_0^{d(x)}
    \dfrac{
        \Delta u(x)\frac{r^2}{2n}
    }{
        r^{n+2s}
    }
    r^{n-1}\,dr
    +O\left(
        [D^2 u]_{C^\beta(\Omega)}
        d(x)^\beta
    \right))\\
&\to
    -C_{n,s}\dfrac{|\bS^{n-1}|}{2n(2-2s)}
    \Delta u(x)
=-\Delta u(x),
\end{split}\]
as $d(x)\to 0$.
\end{proof}

A nice bound is available for $\cL_\Omega$ on $C^{1,1}$ functions. We will need it only for $\delta$, a smooth function that agrees with $d_\Omega$ near the boundary.

\begin{lem}\label{lem:Ld<}
We have
\[
\abs{\cL_\Omega\delta(x)}
\leq
    n\seminorm[C^{0,1}(B_{d(x)}(x))]{
        \nabla\delta
    },
        \quad \forall x\in\Omega.
\]
In particular, $\cL_\Omega\delta$ is universally bounded.
\end{lem}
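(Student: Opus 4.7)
The plan is to exploit the $C^{1,1}$-regularity of $\delta$ through a symmetric second-difference estimate and then use the fact that the normalization constant $C_{n,s}$ in \eqref{eq:Cns} is essentially chosen to make the resulting integral equal exactly $n$.

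First, I would start from the symmetric form
\[
\cL_\Omega\delta(x)=\dfrac{C_{n,s}}{2}d(x)^{2s-2}\int_{B_{d(x)}}\dfrac{2\delta(x)-\delta(x+y)-\delta(x-y)}{|y|^{n+2s}}\,dy,
\]
which avoids the need for principal values. The key pointwise estimate is
\[
\bigl|\delta(x+y)+\delta(x-y)-2\delta(x)\bigr|\leq \seminorm[C^{0,1}(B_{d(x)}(x))]{\nabla\delta}\,|y|^2
\quad\text{for } |y|<d(x),
\]
which I would obtain from the fundamental theorem of calculus written as
\[
\delta(x+y)+\delta(x-y)-2\delta(x)=\int_0^1\bigl(\nabla\delta(x+ty)-\nabla\delta(x-ty)\bigr)\cdot y\,dt,
\]
together with the fact that both $x\pm ty$ lie in $B_{d(x)}(x)$ for $t\in[0,1]$, so that the Lipschitz seminorm of $\nabla\delta$ restricted to this ball bounds the integrand by $2t\,\seminorm[C^{0,1}(B_{d(x)}(x))]{\nabla\delta}\,|y|^2$.

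Second, plugging this bound into the integral defining $\cL_\Omega\delta(x)$ yields
\[
|\cL_\Omega\delta(x)|
\leq
\dfrac{C_{n,s}}{2}d(x)^{2s-2}\,\seminorm[C^{0,1}(B_{d(x)}(x))]{\nabla\delta}
\int_{B_{d(x)}}\dfrac{|y|^2}{|y|^{n+2s}}\,dy.
\]
The remaining integral is handled by the normalization of $C_{n,s}$. Writing $|y|^2=\sum_{i=1}^n y_i^2$ and using rotational symmetry,
\[
\int_{B_r}\dfrac{|y|^2}{|y|^{n+2s}}\,dy
= n\int_{B_r}\dfrac{y_n^2}{|y|^{n+2s}}\,dy
= \dfrac{2n}{C_{n,s}}\,r^{2-2s},
\]
where the last equality is exactly \eqref{eq:Cns}. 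Substituting this with $r=d(x)$ cancels all factors of $C_{n,s}$ and $d(x)$, leaving the clean bound $|\cL_\Omega\delta(x)|\leq n\seminorm[C^{0,1}(B_{d(x)}(x))]{\nabla\delta}$.

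Finally, the universal bound follows immediately: since $\delta\in C^{1,1}(\overline{\Omega})$ by construction, $\seminorm[C^{0,1}(B_{d(x)}(x))]{\nabla\delta}\leq \seminorm[C^{0,1}(\overline{\Omega})]{\nabla\delta}\leq C(\Omega)$. There is really no obstacle here; the argument is essentially calibration of the normalization constant together with the second-difference form, and it is the reason the symmetric writing of $\cL_\Omega$ was preferred at the start of the paper.
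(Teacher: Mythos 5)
Your proposal is correct and follows essentially the same route as the paper: bound the symmetric second difference of $\delta$ by $\seminorm[C^{0,1}(B_{d(x)}(x))]{\nabla\delta}\,|y|^2$ (the paper simply calls this a Taylor expansion with quadratic error, whereas you spell out the fundamental-theorem-of-calculus argument), then use the normalization \eqref{eq:Cns} together with $\int_{B_r}|y|^2|y|^{-n-2s}\,dy = n\int_{B_r}y_n^2|y|^{-n-2s}\,dy$ to evaluate the remaining integral exactly and obtain the constant $n$.
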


\begin{proof}
Since $\delta\in C^{1,1}(\overline{\Omega})$, by a Taylor expansion with quadratic error and \eqref{eq:Cns},
\[\begin{split}
|\cL_\Omega\delta(x)|
\leq C_{n,s}
    d(x)^{2s-2}
    \int_{B_{d(x)}}
        \dfrac{
            \frac12
            \seminorm[C^{0,1}(B_{d(x)}(x))]{
                \nabla \delta
            }
            |y|^2
        }{
            |y|^{n+2s}
        }
    \,dy
=n\seminorm[C^{0,1}(B_{d(x)}(x))]{\nabla \delta}.
\qedhere
\end{split}\]
\end{proof}

We collect the effect of translation and scaling on $L_\Omega$, since the operator depends heavily on the domain. When various domains are in consideration, we put the domain as a subscript. 

Let $z\in\R^n$. For $u:\Omega\to\R$, define $u(\cdot;z):\Omega-z\to\R$ by
\[
u(x;z)=u(x+z).
\]

\begin{lem}[Translation]
\label{lem:translation}
Let $u\in C^{2s+}(\Omega)$. For any $z\in\R^n$,
\[
\cL_{\Omega-z}u(x;z)
=\cL_{\Omega}u(x+z).
\]
\end{lem}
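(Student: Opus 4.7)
The statement is essentially a bookkeeping identity, so the plan is simply to unfold the definition of $\cL$ on both sides and match terms. The key observation is that the distance function is translation-equivariant: $d_{\Omega-z}(x) = d_{\Omega}(x+z)$, since the map $y \mapsto y+z$ is an isometry sending $\p(\Omega-z)$ to $\p\Omega$. In particular, the radius of the ball over which we integrate is the same on both sides.

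I would proceed as follows. First, write out the left-hand side from the definition of the operator, giving
\[
\cL_{\Omega-z}u(\cdot\,;z)(x)
=\dfrac{C_{n,s}}{2}d_{\Omega-z}(x)^{2s-2}
\int_{B_{d_{\Omega-z}(x)}}
\dfrac{2u(x;z)-u(x+y;z)-u(x-y;z)}{|y|^{n+2s}}\,dy.
\]
Next, substitute $u(w;z)=u(w+z)$ for each of the three occurrences in the numerator, so that $u(x;z)=u(x+z)$, $u(x\pm y;z)=u(x+z\pm y)$. Using $d_{\Omega-z}(x)=d_{\Omega}(x+z)$ for the prefactor and for the domain of integration yields exactly
\[
\dfrac{C_{n,s}}{2}d_{\Omega}(x+z)^{2s-2}
\int_{B_{d_{\Omega}(x+z)}}
\dfrac{2u(x+z)-u((x+z)+y)-u((x+z)-y)}{|y|^{n+2s}}\,dy,
\]
which is $\cL_{\Omega}u(x+z)$ by definition.

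There is no real obstacle: no change of variables, no principal-value subtlety beyond what is already built into the symmetrized form, and the regularity hypothesis $u\in C^{2s+}(\Omega)$ is inherited by $u(\cdot\,;z)\in C^{2s+}(\Omega-z)$, so the integrals converge on both sides. The only minor care required is to confirm translation invariance of $d$, which is immediate from $\dist(x,\p(\Omega-z))=\dist(x+z,\p\Omega)$.
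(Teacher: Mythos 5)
Your proof is correct and follows essentially the same route as the paper's: observe $d_{\Omega-z}(x)=d_{\Omega}(x+z)$, unfold the definition of the operator, and substitute. The only cosmetic difference is that you use the symmetrized form of the integrand while the paper uses the principal-value form; both are equivalent and the computation is the same.
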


\begin{proof}
Since
\[
d_{\Omega-z}(x)=d_{\Omega}(x+z)
    \quad \textfor x\in \Omega-z,
\]
we have
\[\begin{split}
\cL_{\Omega-z}u(x;z)
&=C_{n,s}d_{\Omega-z}(x)^{2s-2}
    \PV\int_{B_{d_{\Omega-z}(x)}}
        \dfrac{
            u(x;z)-u(x+y;z)
        }{
            |y|^{n+2s}
        }
    \,dy\\
&=C_{n,s}d_{\Omega}(x+z)^{2s-2}
    \PV\int_{B_{d_{\Omega}(x+z)}}
        \dfrac{
            u(x+z)-u(x+z+y)
        }{
            |y|^{n+2s}
        }
    \,dy\\
&=\cL_{\Omega}(x+z).
\qedhere
\end{split}\]
\end{proof}

Let $r>0$. For $u:\Omega\to\R$, consider the rescaling $u_r:r^{-1}\Omega\to\R$ given by
\[
u_r(x)=u(rx).
\]
\begin{lem}[Scaling]
\label{lem:scaling}
Let $u\in C^{2s+}(\Omega)$. For any $r>0$,
\[
\cL_{r^{-1}\Omega}u_r(x)
=r^2 \cL_{\Omega}u(rx).
\]
\end{lem}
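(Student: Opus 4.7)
The plan is to unfold both sides from the definition and show that a linear change of variables in the integral converts $\cL_{r^{-1}\Omega}u_r(x)$ into $r^2 \cL_\Omega u(rx)$. Everything should reduce to a power count of $r$ coming from three places: the distance prefactor, the Jacobian $dy$, and the kernel $|y|^{-n-2s}$.

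First I would record the fundamental scaling identity for the distance function, namely
\[
d_{r^{-1}\Omega}(x) = r^{-1} d_\Omega(rx), \qquad x \in r^{-1}\Omega,
\]
which follows immediately from the definition of $\dist(\cdot,\partial(r^{-1}\Omega))$. This in turn identifies the ball of integration in the definition of $\cL_{r^{-1}\Omega} u_r(x)$ as $B_{r^{-1}d_\Omega(rx)}$.

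Next I would write
\[
\cL_{r^{-1}\Omega} u_r(x)
= C_{n,s}\, \bigl(r^{-1}d_\Omega(rx)\bigr)^{2s-2}\,
\PV \int_{B_{r^{-1}d_\Omega(rx)}}
\frac{u(rx)-u(rx+ry)}{|y|^{n+2s}}\,dy
\]
and perform the substitution $z = ry$, which sends the ball of radius $r^{-1}d_\Omega(rx)$ to the ball of radius $d_\Omega(rx)$, produces $dy = r^{-n}\,dz$, and gives $|y|^{-(n+2s)} = r^{\,n+2s}|z|^{-(n+2s)}$. Collecting the powers of $r$ yields
\[
r^{-(2s-2)} \cdot r^{\,n+2s} \cdot r^{-n} = r^{2-2s+2s} = r^2,
\]
so the integrand rearranges precisely into $r^2\,\cL_\Omega u(rx)$.

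There is no real obstacle here; the only point to check is that the change of variables is legitimate under the principal value, but this is routine since $u \in C^{2s+}(\Omega)$ ensures the symmetric form of the integrand (used in the second line of the definition of $\cL_\Omega$) is absolutely integrable on the ball, and the substitution $z = ry$ is linear and bijective. Thus the identity holds pointwise for every $x \in r^{-1}\Omega$.
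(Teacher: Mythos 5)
Your proof is correct and follows essentially the same route as the paper's: establish the distance scaling identity $d_{r^{-1}\Omega}(x)=r^{-1}d_\Omega(rx)$, unfold the definition, substitute $z=ry$, and collect the powers of $r$ from the prefactor, kernel, and Jacobian.
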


\begin{proof}
Note that
\[
d_{r^{-1}\Omega}(x)=r^{-1}d_{\Omega}(rx),
    \quad \textfor x\in r^{-1}\Omega.
\]
Therefore
\[\begin{split}
\cL_{r^{-1}\Omega}u_r(x)
&=C_{n,s}d_{r^{-1}\Omega}(x)^{2s-2}
    \PV\int_{B_{d_{r^{-1}\Omega}(x)}}
        \dfrac{u_r(x)-u_r(x+y)}{|y|^{n+2s}}
    \,dy\\
&=C_{n,s}
    r^{2-2s}d_{\Omega}(rx)^{2s-2}
    \PV\int_{B_{r^{-1}d_{\Omega}(rx)}}
        \dfrac{
            u(rx)-u(rx+ry)
        }{
            r^{-n-2s}|ry|^{n+2s}
        }
    r^{-n}\,d(ry)\\
&=r^2 \cL_{\Omega}u(rx).
\qedhere
\end{split}\]
\end{proof}

%

\section{An auxiliary function}

Let
\begin{equation}\label{eq:psi}
\begin{split}
\psi(p,t)
&=
    \dfrac{C_{n,s}}{2}
    t^{2s-2}
    \int_{|z|<t}
        \dfrac{
            (1+z_n)^p+(1-z_n)^p-2
        }{
            |z|^{n+2s}
        }
    \,dz\\
&=
    \dfrac{C_{n,s}}{2}
    \int_{|y|<1}
        \dfrac{
            (1+ty_n)^p+(1-ty_n)^p-2
        }{
            t^2|y|^{n+2s}
        }
    \,dy.
\end{split}
\end{equation}
Note that, by \eqref{eq:Cns}, $\psi(2,t)=2$ for all $t>0$.

\begin{lem}\label{lem:psi-p}
For $p>0$ and $t\in[0,1]$,
\[
0\leq \psi_p(p,t) \leq C.
\]
Consequently,
\[
\abs{
    \psi(p,t)-\psi(2,t)
}
\leq C|p-2|.
\]
Here the constant $C$ depends only on $n,s$ and $p$ and it remains bounded as $p\to 2$.
\end{lem}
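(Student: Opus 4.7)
My plan is to differentiate $\psi$ in $p$ under the integral sign, so that
\[
\psi_p(p,t) = \frac{C_{n,s}}{2} \int_{|y|<1} \frac{h_p(ty_n)}{t^2 |y|^{n+2s}}\,dy,
\]
where $h_p(u) := (1+u)^p \log(1+u) + (1-u)^p \log(1-u)$, with the differentiation justified by dominated convergence (the integrand's $p$-derivative is dominated uniformly in $p$ on compact sets, using $h_p(u)=O(u^2)$ near $0$ and the endpoint boundedness noted below). The key observation is that $h_p$ is even in $u$ with $h_p(0)=h_p'(0)=0$ and $h_p''(0) = 2(2p-1)$, so $H_p(u) := h_p(u)/u^2$ extends continuously to $[-1,1]$, with $H_p(0) = 2p-1$ and $H_p(\pm 1) = 2^p \log 2$ (since $(1-u)^p\log(1-u) \to 0$ as $u\to 1^-$ for $p>0$). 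Consequently $\| H_p \|_{L^\infty([-1,1])} \le C(p)$ with $C(p)$ continuous and locally bounded near $p=2$.

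Rewriting the integrand as $H_p(ty_n)\, y_n^2 / |y|^{n+2s}$ and using the normalization \eqref{eq:Cns} with $r=1$, I obtain
\[
|\psi_p(p,t)| \le \| H_p \|_{L^\infty([-1,1])} \cdot \frac{C_{n,s}}{2}\int_{|y|<1}\frac{y_n^2}{|y|^{n+2s}}\,dy = \| H_p \|_{L^\infty([-1,1])} \le C(p),
\]
which is the upper bound. For the lower bound $\psi_p \ge 0$, it suffices to check $h_p \ge 0$ on $[-1,1]$. I compute $\partial_p h_p(u) = (1+u)^p \log^2(1+u) + (1-u)^p \log^2(1-u) \ge 0$, so $p\mapsto h_p(u)$ is non-decreasing; combined with the base case $h_1(u)=(1+u)\log(1+u)+(1-u)\log(1-u)\ge 0$ (convex on $(-1,1)$ with minimum $h_1(0)=0$, since $h_1''(u)=2/(1-u^2)>0$), this yields $h_p\ge 0$ for $p\ge 1$, which covers the regime $p$ close to $2$ needed in \Cref{lem:super-1D}.

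Finally, the ``consequently'' estimate is immediate from the fundamental theorem of calculus:
\[
|\psi(p,t)-\psi(2,t)| = \Bigl|\int_2^p \psi_q(q,t)\,dq\Bigr| \le |p-2|\sup_{q}|\psi_q(q,t)| \le C|p-2|.
\]
The main obstacle is uniform control of $H_p$ at the endpoints $u=\pm 1$, where $(1-u)^p$ is only H\"older continuous for non-integer $p$, precluding a Taylor expansion; one must instead directly exploit that the factor $(1-u)^p \to 0$ dominates the blowup of $\log(1-u)$, yielding the endpoint value $2^p\log 2$ and thereby uniform boundedness of $H_p$.
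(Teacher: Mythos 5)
Your proof is correct and, in my view, is a cleaner packaging of the same core idea the paper uses. Both arguments reduce to observing that the integrand of $\psi_p(p,t)$ is $O(y_n^2/|y|^{n+2s})$ so that the normalization \eqref{eq:Cns} kills the resulting integral. The paper achieves this by an ad hoc two-case split: for $|ty_n|<1/2$ it Taylor-expands $h_p(ty_n)$ to get $O(t^2y_n^2)$, and on the leftover region $\{t\ge 1/2,\ |y_n|\ge 1/2\}$ it uses that $|y|$ is bounded below and $x\mapsto x^p\log x$ is bounded on $[0,2]$. You instead factor the integrand as $H_p(ty_n)\,y_n^2/|y|^{n+2s}$ with $H_p=h_p/u^2$, check that $H_p$ extends continuously to $[-1,1]$ (Taylor at $0$, decay of $(1-u)^p\log(1-u)$ at $\pm 1$), and invoke \eqref{eq:Cns} once. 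This avoids the case split entirely and makes the dependence on $p$ transparent (joint continuity of $(p,u)\mapsto H_p(u)$ gives local boundedness of $C(p)$).

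On the lower bound $\psi_p\ge 0$: the paper simply asserts this in the displayed formula without justification, whereas you actually prove it via $\partial_p h_p\ge 0$ together with the base case $h_1\ge 0$ (convexity of $h_1$). You are right to flag that this argument only gives $h_p\ge 0$ for $p\ge 1$, and right that this is all that is needed since \Cref{lem:super-1D} and \Cref{prop:barrier} use $p$ close to $2$. In fact the lemma's claim that $\psi_p\ge 0$ for \emph{all} $p>0$ is too strong: as $p\to 0^+$, $h_p(u)\to\log(1-u^2)<0$, so $\psi_p(p,t)<0$ for small $p$ and $t$ away from $0$ (consistent with \Cref{lem:psi-q}). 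Your restricted statement is the honest one; the paper's formulation should be read with $p$ near $2$, which is also the regime in which the constant is claimed to remain bounded. The ``consequently'' step via the fundamental theorem of calculus is of course the same in both.
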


\begin{proof}
It suffices to bound
\[
\psi_p(p,t)
=
    \dfrac{C_{n,s}}{2}
    \int_{|y|<1}
        \dfrac{
            (1+ty_n)^p\log(1+ty_n)
            +(1-ty_n)^p\log(1-ty_n)
        }{
            t^2|y|^{n+2s}
        }
    \,dy
\geq 0.
\]
When $t\in(0,1/2)$ or $|y_n|<1/2$, we have $|ty_n|<1/2$ and so Taylor expansion gives
\[\begin{split}
&\quad\;
    (1+ty_n)^p\log(1+ty_n)
    +(1-ty_n)^p\log(1-ty_n)
    \\
&\leq
    (1+Cty_n)(ty_n+Ct^2y_n^2)
    +(1+Cty_n)(-ty_n+Ct^2y_n^2)\\
&\leq
    Ct^2y_n^2.
\end{split}\]
When $t\in[1/2,1]$ and $|y_n|\geq 1/2$, we have also $|y|\geq1/2$ so the integrand is bounded (by the boundedness of the function $x\mapsto x^p\log x$ on $[0,2]$).
so
\[\begin{split}
\psi_p(p,t)
&\leq
    C+C\int_{|y_n|<1/2}
        \dfrac{
            y_n^2
        }{
            |y|^{n+2s}
        }
    \,dy
\leq C.
    \qedhere
\end{split}\]
\end{proof}

\begin{lem}\label{lem:psi-q}
There exists $c>0$ depending only on $n$ and $s$ such that as $q\to 0^+$,
\[
-\psi(q,t) \leq cq+O(q^2),
\]
uniformly in $t\in[0,1]$.
\end{lem}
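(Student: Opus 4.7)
The plan is to Taylor-expand $\psi(q,t)$ around $q=0$. Direct inspection of \eqref{eq:psi} gives $\psi(0,t)=0$ since $(1\pm ty_n)^0 = 1$, so Taylor's formula with Lagrange remainder yields
\[
\psi(q,t) = q\,\psi_p(0,t) + \frac{q^2}{2}\psi_{pp}(\xi,t),
\]
for some $\xi = \xi(q,t) \in (0,q)$. The claim therefore reduces to establishing (a) $-\psi_p(0,t) \leq c$ uniformly in $t \in [0,1]$, and (b) $|\psi_{pp}(p,t)| \leq C$ uniformly for $p$ in a right-neighborhood of $0$ and $t \in [0,1]$.

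For (a), differentiating in $p$ at $p=0$ gives
\[
\psi_p(0,t) = \frac{C_{n,s}}{2}\int_{|y|<1}\frac{\log(1-t^2 y_n^2)}{t^2|y|^{n+2s}}\,dy \leq 0.
\]
I would bound $|\psi_p(0,t)|$ by a two-region split. On $\{|ty_n|\leq 1/2\}$, the elementary estimate $-\log(1-x)\leq 2x$ for $x\in[0,1/2]$ controls the integrand by $Cy_n^2/|y|^{n+2s}$, which is integrable at the origin. On the complement (where necessarily $t\geq 1/\sqrt{2}$ and $|y|$ is bounded away from $0$), the factor $1/(t^2|y|^{n+2s})$ is uniformly bounded, while $|\log(1-t^2 y_n^2)|\leq |\log(1-y_n^2)|$ is integrable over $\{|y|<1\}$ because the logarithm has an integrable singularity on the codimension-one set $\{|y_n|=1\}$.

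For (b), the same computation yields
\[
\psi_{pp}(p,t) = \frac{C_{n,s}}{2}\int_{|y|<1}\frac{(1+ty_n)^p\log^2(1+ty_n)+(1-ty_n)^p\log^2(1-ty_n)}{t^2|y|^{n+2s}}\,dy.
\]
For $p$ in a bounded right-neighborhood $[0,p_0]$ of $0$, the factors $(1\pm ty_n)^p$ are uniformly bounded on $|ty_n|\leq 1$: if $1\pm ty_n\leq 1$, then $(1\pm ty_n)^p\leq 1$, and if $1\pm ty_n>1$, then $(1\pm ty_n)^p\leq 2^{p_0}$. The same two-region split as in (a), now with $\log^2$ in place of $|\log|$, yields the uniform bound, using that $\log^2$ also has an integrable singularity on $\{|y_n|=1\}$. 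This dominated-integrand control simultaneously justifies the twice-differentiability of $\psi$ in $p$ via dominated convergence.

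Combining (a) and (b) gives $-\psi(q,t) = -q\,\psi_p(0,t) - (q^2/2)\psi_{pp}(\xi,t) \leq cq + Cq^2$ uniformly in $t \in [0,1]$, as claimed. The main technical point is the uniformity in $t$ as $t\to 1^-$, where the logarithmic factors develop singularities on the sphere $\{|y|=1\}$; this is absorbed by the codimension-one integrability of $\log$ and $\log^2$ on the ball.
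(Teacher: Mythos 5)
Your proposal is correct and takes essentially the same approach as the paper: Taylor-expand in $q$ at $q=0$ (the paper expands the scalar integrand $(1+y)^q+(1-y)^q-2$ before integrating, you expand $\psi$ itself with Lagrange remainder — these are equivalent after differentiating under the integral). Your two-region split and the explicit appeal to codimension-one integrability of $\log$ and $\log^2$ near $\{|y_n|=1\}$ make the uniformity as $t\to 1^-$ more transparent than the paper's brief remark.
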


\begin{proof}
Using the Taylor expansion
\begin{align*}
&\quad\;
	(1+y)^q+(1-y)^q-2\\
&=(\log(1+y)+\log(1-y))q
	+\frac{
		(1+y)^{q_*}(\log(1+y))^2
		+(1-y)^{q_*}(\log(1-y))^2
	}{2}q^2\\
&=\log(1-y^2)\cdot q+O(y^2q^2),
\end{align*}
we have
\begin{align*}
-\psi(q,t)
=
\left (
\frac{C_{n,s}}{2}t^{2s-2}
\int_{|y|<t}
	\frac{
		\log(1-y^2)
	}{
		|y|^{n+2s}
	}
\,dy
\right )q
+O(q^2),
\end{align*}
where the error $O(q^2)$ is bounded independently of $t\in[0,1]$. To see that the coefficient of $q$ remains strictly positive as $t\to 0$, we observe that for $|y|<1/2$, $\log(1-y^2) \leq Cy^2$ and the homogeneity similar to \eqref{eq:Cns}.
\end{proof}

\section{Boundary expansions}

\begin{lem}\label{lem:expand}
Suppose $\Omega$ is $C^{1,1}$, $0\in \p\Omega$ and $u\in C^{1,\gamma}(\Omega)$. Let $\nu(x)$ denotes the inward normal of the parallel surface containing $x$, and assume $\nu(0)=e_n$. Suppose
\begin{equation}\label{eq:expand-2a}
\dfrac{u(x)}{d(x)}=c_0+O(|x|^{\gamma})
    \quad \textas x\to0.
\end{equation}
Then
\begin{equation}\label{eq:expand-1}
u(x)=c_0(x\cdot\nu(x))+O(|x|^{1+\gamma})
    \quad \textas x\to0.
\end{equation}
\end{lem}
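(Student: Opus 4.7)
My plan is to reduce the expansion of $u/d$ given in \eqref{eq:expand-2a} to the desired expansion \eqref{eq:expand-1} via the geometric identity
\[
d(x)=x\cdot\nu(x)+O(|x|^2)
	\quad \textas x\to 0,
\]
which is valid for any $C^{1,1}$ domain. Once this is in hand, the conclusion follows from a direct substitution and routine bookkeeping of error terms.

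First I would establish the geometric identity. Since $\p\Omega$ is $C^{1,1}$, the distance function $d$ is itself $C^{1,1}$ in a one-sided tubular neighborhood of $0$ inside $\Omega$ (for $|x|$ small enough that the nearest-point projection is unique, which the interior and exterior ball conditions guarantee). In this regime $\nabla d(x)=\nu(x)$ by definition of the parallel-surface normal, and $\nu\in C^{0,1}$ since $d\in C^{1,1}$. Using $d(0)=0$ and $\nu(0)=e_n$, Taylor expansion gives
\[
d(x) = \nabla d(0)\cdot x +O(|x|^2)= x\cdot\nu(0)+O(|x|^2),
\]
while the Lipschitz bound $|\nu(x)-\nu(0)|=O(|x|)$ yields $x\cdot(\nu(x)-\nu(0))=O(|x|^2)$. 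Summing these two identities gives the desired expansion of $d$.

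Next I would substitute into \eqref{eq:expand-2a}. Writing $u(x)=d(x)(c_0+O(|x|^\gamma))$ and using $0\leq d(x)\leq|x|$ for $x$ near $0$,
\[
u(x)=c_0 d(x)+O(|x|^{1+\gamma})=c_0\, x\cdot\nu(x)+O(|x|^2)+O(|x|^{1+\gamma}).
\]
Since $\gamma\in(0,1]$ by the standard convention for $C^{1,\gamma}$, the $O(|x|^2)$ term is absorbed into $O(|x|^{1+\gamma})$ for $|x|$ small, giving \eqref{eq:expand-1}.

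The only mildly delicate point is the geometric expansion of $d$, which relies on the classical fact that the signed distance function to a $C^{1,1}$ hypersurface is itself $C^{1,1}$ in a tubular neighborhood (the same fact underlying the smoothing $\delta$ introduced in the general setting). Everything else is routine; the hypothesis $u\in C^{1,\gamma}(\Omega)$ plays no role beyond enforcing $\gamma\leq 1$.
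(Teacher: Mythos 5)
Your proof is correct and follows the same overall structure as the paper's: reduce \eqref{eq:expand-2a} to \eqref{eq:expand-1} via the geometric identity $d(x)=x\cdot\nu(x)+O(|x|^2)$, then substitute. The only difference is how that geometric identity is established. The paper writes $\p\Omega$ locally as a graph $x_n=\Phi(x')$ with $\Phi(0)=|\nabla\Phi(0)|=0$ and computes $x\cdot\nu(x)=x_n+x\cdot(\nu(x)-e_n)=d(x)+O(|x|^2)$ directly from the $C^{1,1}$ bound on $\Phi$. You instead invoke the (equally standard) fact that $d\in C^{1,1}$ in a one-sided tubular neighborhood, identify $\nabla d=\nu$ there, and Taylor-expand $d$ at $0$ together with the Lipschitz estimate $|\nu(x)-\nu(0)|=O(|x|)$. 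Both are valid; yours is slightly more intrinsic since it avoids a choice of local chart, while the paper's is more self-contained in that it only uses the $C^{1,1}$ graph hypothesis rather than the derived regularity of $d$. Your closing observation about the role of $u\in C^{1,\gamma}(\Omega)$ is also accurate: what matters is $\gamma\leq 1$, so that $O(|x|^2)$ is absorbed into $O(|x|^{1+\gamma})$.
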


\begin{proof}
Represent $\p\Omega$ by a graph $x_n=\Phi(x')$ near $x=(x',x_n)=0$, then $\Phi(0)=|\nabla \Phi(0)|=0$ and
\[
x\cdot \nu(x)
=x_n+x\cdot(\nu(x)-e_n)
=d(x)+O(\Phi(x))+O(|D^2\Phi(x)||x|^2)
=d(x)+O(|x|^2).
\]
Suppose \eqref{eq:expand-2a} holds. Then
\[
u(x)=c_0d(x)+O(|x|^{\gamma}d(x)).
\]
Clearly $d(x)=O(|x|)$ as $x\to 0$.
Thus \eqref{eq:expand-1} holds.
\end{proof}

\section*{Acknowledgements}

\noindent 
The author has received funding from the Swiss National Science Foundation under the Grant PZ00P2\_202012/1. He thanks Xavier Fern\'{a}ndez-Real and Xavier Ros-Oton for simulating discussions.

\section*{Data availability statement}

Data sharing not applicable to this article as no datasets were generated or analysed during the current study.

\end{document}